\documentclass[11pt, reqno]{amsart}
\usepackage{amssymb,amsfonts,amsxtra,amsmath,enumerate,tikz-cd,xcolor,mathrsfs,dsfont,verbatim,centernot}
\usepackage[all]{xy}
\usepackage[cal=dutchcal,calscaled=.96]{mathalfa}

\usepackage{scalerel}
\usepackage{stackengine,wasysym}

\newcommand\reallywidetilde[1]{\ThisStyle{%
  \setbox0=\hbox{$\SavedStyle#1$}%
  \stackengine{-.1\LMpt}{$\SavedStyle#1$}{%
    \stretchto{\scaleto{\SavedStyle\mkern.2mu\AC}{.5150\wd0}}{.6\ht0}%
  }{O}{c}{F}{T}{S}%
}}

\setlength{\textwidth}{6 in}
\setlength{\textheight}{8.75 in}
\setlength{\topmargin}{-0.25in}
\setlength{\oddsidemargin}{0.25in}
\setlength{\evensidemargin}{0.25in}


\newtheorem{theorem}{Theorem}[section]
\newtheorem{lemma}[theorem]{Lemma}
\newtheorem{corol}[theorem]{Corollary}
\newtheorem{proposition}[theorem]{Proposition}

\newtheorem{question}[theorem]{Question}

\newtheorem{definition}[theorem]{Definition}
\newtheorem{remark}[theorem]{Remark}

\newcommand{\Osr}{{\mathscr{O}}}
\newcommand{\Pbb}{{\mathbb{P}}}

\newcommand{\Cbb}{{\mathbb{C}}}
\newcommand{\Pbf}{{\mathbf{P}}}

\newcommand{\p}{\partial}

\DeclareMathOperator{\rank}{rank}
\DeclareMathOperator{\Ch}{\mathbf{Ch}}
\DeclareMathOperator{\supp}{supp}
\DeclareMathOperator{\pr}{pr}

\title[Lagrangian specialisation v.s. graph construction]{an approach to Lagrangian specialisation through MacPherson's graph construction}
\author{Xia Liao}
\address{
Department of Mathematical Sciences, 
Huaqiao University, 
Chenghua North Road 269, 
Quanzhou, Fujian, China
}
\email{xliao@hqu.edu.cn}

\begin{document}
\maketitle

\begin{abstract}
Let $f: M \to N$ be a holomorphic map between two complex manifolds. Assume $f$ is flat and sans \'{e}clatement en codimension 0 (no blowup in codimension 0). We study the theory of Lagrangian specialisation for such $f$, and prove a Gonz\'{a}lez-Sprinberg type formula for the local Euler obstruction relative to $f$. With the help of this formula and MacPherson's graph construction for the vector bundle map $f^*T^*N \to T^*M$, we find the Lagrangian cycle of the Milnor number constructible function $\mu$. As an application, we study the Chern class transformation of  $\mu$ when $f$ has finite contact type. 
\end{abstract}

\section{Introduction}

In \cite{MR804052}, Sabbah studied the 1-parameter specialisation of Lagrangian cycles. Let $Z$ be a closed analytic subset of $\mathbb{C}^n$, and let $f: Z \to S$ be a flat morphism into a 1-dimensional smooth base $S$. Sabbah considered the factorisation of $f$ through the graph

\begin{equation*}
\begin{tikzcd}
Z \arrow[rd,"f"] \arrow[r] & \mathbb{C}^n \times S \arrow[d] \\
                                  & S, 
\end{tikzcd}
\end{equation*}
hence he can view $Z$ as a family of subspaces of $\mathbb{C}^n$, and form a family of conormal spaces $T^{*}_{f^{-1}(s)}\mathbb{C}^n$. Picking an $s_0 \in S$, the Lagrangian specialisation at $s_0$ is the limit of $T^{*}_{f^{-1}(s)}\mathbb{C}^n$ as $s$ approaches $s_0$. This limit is still Lagrangian, but a priori has several irreducible components, each of which has some multiplicities. For $x \in f^{-1}(s_0)$, he considered a local Euler obstruction at $x$ relative to $f$, which we may denote by $\textup{Eu}_f(x)$, and related it in one way to the topological Euler characteristic of the Milnor fibre at $x$, and in another way to the local Euler obstruction of the Lagrangian specilisation at $x$.

One goal of the present paper is to extend the theory of Lagrangian specialisation to a situation where the dimension of the base can be greater than 1. More precisely, we begin with a holomorphic map $f: M \to N$ between two complex manifolds. We assume $f$ is flat and sans \'{e}clatement en codimension 0 (See \S \ref{noblowup}). Fixing a point $z\in M$ and factorising $f$ through the graph 

\begin{equation*}
\begin{tikzcd}
M \arrow[rd,"f"] \arrow[r] & M \times N \arrow[d] \\
                                  & N, 
\end{tikzcd}
\end{equation*}
we may consider the specialisation of $T^*_{f^{-1}(t)}M$ ($t \in N$) as $t$ approaches $f(z)$. As the 1-parameter deformation case, we will study the relative Euler obstruction at $z$, denoted by $\textup{Eu}_f(z)$, and show its restriction to the Milnor fibre $F_z$ gives $\chi(F_z)$, whereas its restriction to the special fibre $M_{f(z)}$ can be computed by an algebraic formula, which is essentially a relative version of the Gonz\'{a}lez-Sprinberg formula for local Euler obstructions. We give the relative Nash and relative conormal version of this formula in Theorem \ref{nash-integral} and Theorem \ref{mu-chi-conor}. 

The reader will see that the theory does not depend on the smoothness of $M$, but only depends on the generic smoothness of $f$ and the assumption ``sans \'{e}clatement en codimension 0". So the theory will work as fine if we replace $M$ by any reduced analytic space. We state our theorems for a complex manifold $M$ because it is most relevant to our major application: to study the characteristic cycle associated with the deformation of an ICIS.

Because the existence of the Milnor fibration is guaranteed by the condition no blowup in codimension 0, we may consider a Milnor number constructible function $\mu$. Recall that given $z \in M$, the Milnor fibre at $z$ is given by
\begin{equation*}
F_z = B_{\epsilon}(z)\cap f^{-1}(c)
\end{equation*}
where $B_{\epsilon}(z)$ is a sufficiently small real ball centred at $z$ and $c \in N$ is sufficiently close to but not equal to $f(z)$. The Milnor number is defined to be
\begin{equation*}
\mu(z) = (-1)^{m-n}\Big(\chi\big(F_z\big)-1\Big)
\end{equation*}
where $m$ ($n$) is the dimension of $M$ ($N$). Equivalently we can write 
\begin{equation}\label{mudef}
1 = \chi(z) + (-1)^{m-n+1}\mu(z).
\end{equation}

The function $\mu$ assigns each $z \in M$ the integer $\mu(z)$. It is a constructible function with support inside the critical space of $f$:
\begin{equation*}
C(f)= \{ z \in M \mid \rank df_z < n \}.
\end{equation*} 
 
It follows from the theory of Chern class transformation that $\mu$ has a characteristic cycle $\Ch(\mu)$. It contains very rich information about the degeneraction of $f$. To find $\Ch(\mu)$, we consider the vector bundle morphism $df: f^{*}T^{*}N \to T^{*}M$, and perform MacPherson's graph construction (\cite{MR732620} \S 18.1) for this morphism. We will show that the cycle in the limit of the graph construction can be grouped into two parts. One part is the Nash modification relative to $f$, hence can be used to compute $\chi(F_z)$ for any $z \in M$ by the Gonz\'{a}lez-Sprinberg type formula mentioned earlier; the other part can be used to compute $\mu(z)$ for any $z \in M$, as the consequence of a homotopy invariance argument.
The one which computes $\mu(z)$ is the projectivisation of a conic Lagrangian cycle, therefore it is the projectivised characteristic cycle of $\mu$. These results are also recorded in Theorem \ref{nash-integral} and Theorem \ref{mu-chi-conor}.

Since one can always perform MacPherson's graph construction as long as $f$ is holomorphic, there is an interesting byproduct of our theory. We can always define two constructible functions $\chi$ and $\mu$ using the algebraic formula given by Theorem \ref{nash-integral} (or Theorem \ref{mu-chi-conor}). They always satisfy equation \eqref{mudef}. When the additional flatness and no blowup in codimension 0 conditions are met, we can say $\chi(z) = \chi(F_z)$ and $\mu(z)$ is the usual Milnor number.

When $N = \mathbb{C}^n$, we may consider a family of embeddings $i_t: M \to M \times \mathbb{C}^n$ given by 
\begin{equation*}
z \in M \mapsto (z,-tf(z)).
\end{equation*}
Let $M_t$ be the image of $i_t$. One can see that the family of graphs appearing in the graph construction is equivalent to the family $T^{*}_{M_t}(M \times N)$ when $t \neq \infty$. This is explained in \S \ref{deformation} for $n=1$, but there is no essential difference when $n > 1$. We also compute the limit of the conormal family explicitly in \S \ref{computelag} in order to clarify the relation between these two deformations. The conclusion we wish to convey is that, MacPherson's graph construction for $f^{*}T^{*}N \to T^{*}M$ can be viewed as the globalisation of the locally defined Lagrangian specialisations.

As one main application of our theory, we will prove a formula of Toru Ohmoto concerning the Chern class transformation of $\mu$. Since our research is strongly motived by Ohmoto's result, which we received from him by grace in the form of personal manuscript, we feel obligated (but willingly) to say a few words about his remarkable formula.

A holomorphic map $f: M \to N$ is said to have finite contact type if any singularity of $f$ is an ICIS. Such map is flat and has no blowup in codimension 0 (Proposition \ref{nbl}). Therefore it makes sense to consider the constructible function $\mu$ under this setup.

Over $\mathbf{P}(f^*T^*N)$, we have an exact sequence
\begin{equation*}
0 \to \xi_N \to \pi_N^{*}f^{*}T^*N \to \zeta_N \to 0
\end{equation*}
where $\pi_N: \Pbf(f^*T^*N) \to M$ is the canonical projection and $\xi_N$ is the tautological line bundle.

Composing $\xi_N \to \pi_N^{*}f^{*}T^*N$ and $\pi_N^{*}f^{*}T^*N \to \pi_N^{*}T^*M$, we get a morphism $\xi_N \to \pi_N^{*}T^*M$. Twisting by $\xi_N^{\vee}$, we obtain a section $\Osr_{\Pbf(f^*T^*N)} \to \pi_N^{*}T^*M \otimes \xi_N^{\vee}$ of $\pi_N^{*}T^*M \otimes \xi_N^{\vee}$. Let $Z \subset \Pbf(f^*T^*N)$ be the zero locus of this section. When $f$ is Thom-Boardman-generic (TB-generic in the rest of the paper), $Z$ is a complex manifold representing $c_m(\pi_N^{*}T^*M \otimes \xi_N^{\vee})$, and $\pi_N\vert_Z: Z \to C(f)$ is a desingularisation of $C(f)$ \cite{TO}. 

Ohmoto proved the following theorem by Thom polynomial theory:

\begin{theorem}\label{chern-mu}\cite{TO}
For $f: M \to N$ which is TB-generic and of finite contact type, it holds that 
\begin{equation}
c_*(\mu) = \pi_{N*}(c(\zeta_N^{\vee}) \cap [Z])) \in H_*(M).
\end{equation}
\end{theorem}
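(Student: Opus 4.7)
The strategy is to deduce the formula directly from Theorem \ref{mu-chi-conor}, bypassing Thom polynomial theory entirely. Theorem \ref{mu-chi-conor} supplies, for any flat holomorphic map with no blowup in codimension $0$, an explicit description of the projectivised characteristic cycle of $\mu$ as the ``$\mu$-component'' of the limit of MacPherson's graph construction applied to $df : f^{*}T^{*}N \to T^{*}M$. What the TB-generic and finite contact type hypotheses add is geometric regularity: by Proposition \ref{nbl}, $f$ is flat with no blowup in codimension $0$; moreover $Z$ is a smooth subvariety of $\mathbf{P}(f^{*}T^{*}N)$ of expected codimension $m$, and $\pi_N\vert_Z : Z \to C(f)$ is a resolution of singularities, so $[Z]$ is a genuine rather than virtual cycle.

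The first step is to identify the graph-construction output with $Z$. The rational map $\mathbf{P}(f^{*}T^{*}N) \dashrightarrow \mathbf{P}(T^{*}M)$ induced by $df$ has indeterminacy locus precisely $Z$, since for $[\omega] \in \mathbf{P}(f^{*}T^{*}N)_z$ one has $[\omega] \in Z$ if and only if $df_z(\omega) = 0$. These are the ``critical directions'' to which the Milnor number is sensitive. Under TB-genericity, the ``$\mu$-component'' of the graph-construction limit is, set-theoretically, the image of the natural embedding $Z \hookrightarrow \mathbf{P}(f^{*}T^{*}N \oplus T^{*}M)$ sending $[\omega] \mapsto [\omega : 0]$; this is the purely ``cotangent-to-$N$'' locus that survives in the $t = \infty$ limit once the map is resolved along $Z$.

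The second step is to compute $c_{*}$ of the constructible function $\mu$ by applying the general formula that recovers the Chern-MacPherson class from a projectivised conic Lagrangian cycle. Transferring this formula along the birational identification above, the ambient Chern class of $T^{*}M$ appearing in the CSM-from-characteristic-cycle formula gets absorbed by the identity $[Z] = c_m(\pi_N^{*}T^{*}M \otimes \xi_N^{\vee}) \cap [\mathbf{P}(f^{*}T^{*}N)]$, while the factor coming from the ambient projective bundle $\mathbf{P}(f^{*}T^{*}N)$ simplifies via the exact sequence $0 \to \xi_N \to \pi_N^{*}f^{*}T^{*}N \to \zeta_N \to 0$ to $c(\zeta_N^{\vee})$. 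Pushing forward to $M$ via $\pi_N$ then yields the claimed expression.

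The main technical obstacle will be making the birational identification between the ``$\mu$-component'' of the graph-construction limit and $Z$ explicit enough to justify the Chern-class computation, and in particular to verify that the multiplicities assigned by the graph construction match the Chern-class weight $c(\zeta_N^{\vee})$ on $Z$. TB-genericity is essential here: it forces the $\mu$-component to be a smooth variety rather than a cycle with multiplicities to track, so the Segre-class manipulation collapses to a clean polynomial-in-Chern-classes expression on $Z$. Once this identification is in place, the remainder is a formal computation using the two exact sequences on $\mathbf{P}(f^{*}T^{*}N)$ and $\mathbf{P}(T^{*}M)$ together with the projection formula for $\pi_N$.
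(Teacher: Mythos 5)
Your overall strategy---deriving the formula from Theorem \ref{mu-chi-conor} and the graph construction, with no Thom polynomial theory---is exactly the paper's route, and your final Chern-class bookkeeping is plausible in outline. But there is a genuine gap in the step you yourself flag as the main technical obstacle, and the way you propose to fill it points at the wrong object. You identify the $\mu$-component of the graph-construction limit, set-theoretically, with the image of $Z$ under $[\omega] \mapsto [\omega:0]$ in $\Pbf(f^*T^*N \oplus T^*M)$. This cannot be right on dimensional grounds: $Z$ is the zero locus of a section of the rank-$m$ bundle $\pi_N^*T^*M \otimes \xi_N^{\vee}$ on the $(m+n-1)$-dimensional space $\Pbf(f^*T^*N)$, so $\dim Z = n-1$, whereas the $\mu$-component $\Sigma_f$ is a projectivised conic Lagrangian cycle of dimension $m+n-1$ (it is the projectivisation of an $(m+n)$-dimensional conic Lagrangian in $T^*(M\times N)$). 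The correct identification is that $\Sigma_f$ is the image, under the proper modification $\kappa: \Pbf(\xi_N \oplus \pi_N^*T^*M) \to \Pbf(T^*M \oplus f^*T^*N)$ of Proposition \ref{bira}, of $\Pbf(C_Z \oplus \xi_N\vert_Z)$ --- a $\Pbf^m$-bundle over $Z$ when $Z$ is smooth, not $Z$ itself. To see this one must interpose a \emph{second} graph construction, performed on $\Pbf(f^*T^*N)$ for the bundle map $\xi_N \to \pi_N^*T^*M$, so that the degeneracy locus becomes the zero scheme $Z$ of a section and Fulton's Example 18.1.6(d) applies, yielding $[\Pbf(\gamma_\infty)] = [\textup{Bl}_Z\Pbf(f^*T^*N)] + [\Pbf(C_Z\oplus\xi_N\vert_Z)]$ and hence $\kappa_*[\Pbf(C_Z\oplus\xi_N\vert_Z)] = [\Sigma_f]$ as in \eqref{coversigma}. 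Without this intermediate space and the comparison map $\kappa$, the ``birational identification'' you invoke has nothing to be birational to.

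A secondary point: the mechanism you propose for the last step --- absorbing $c(T^*M)$ via the identity $[Z] = c_m(\pi_N^{*}T^{*}M \otimes \xi_N^{\vee}) \cap [\Pbf(f^{*}T^{*}N)]$ --- is not what is actually needed once the correct identification is in place. Since the $\mu$-component is the projective bundle $\Pbf((\pi_N^*T^*M\oplus\xi_N)\vert_Z)$ over $Z$ (smoothness of $Z$ giving $C_Z = \pi_N^*T^*M\vert_Z$), the computation closes by the projective-bundle pushforward $a_*\bigl(c(\tau^{\vee})\cap[\Pbf((\pi_N^*T^*M\oplus\xi_N)\vert_Z)]\bigr) = (-1)^m[Z]$ together with Whitney-sum manipulations of the tautological sequences; the Thom--Porteous-type representation of $[Z]$ by $c_m$ is never used in the proof.
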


The left side of the equation is the Chern class transformation of $\mu$. 

We will give another proof in \S\ref{application}, based on our understanding of $\mathbf{\Ch}(\mu)$. Interestingly, a version of our $Z$ is also used in a recent paper of Aluffi, as a middle step for studying the Chern-Schwartz-MacPherson class of any embeddable scheme and the Milnor class of that scheme in case it is a complete intersection (See the description of $\mathscr{Y}$ in \cite{Aluffi} \S 2.2). We believe there must be certain common truth behind the coincidence of the constructions, but is not yet able to bring that common truth into light.

To talk about $\mathbf{\Ch}(\mu)$, we first need a manifold containing $\supp{\mu} = C(f)$, and the natural candidate is $M$. Then no matter what $\Ch (\mu)$ is, it must be a conic Lagrangian cycle living in $T^*M$, consequently $\Pbf(\Ch (\mu))$ must live in $\Pbf(T^*M )$. However, the cycle $[Z]$ appearing in Theorem \ref{chern-mu} lives in $\Pbf(f^{*}T^*N)$. To relate $\Pbf(\Ch (\mu))$ with $Z$, one considers the graph of $df: f^*T^*N \to T^*M$, and MacPherson's graph construction allows one to deform the graph either to $f^*T^*N$ or $T^*M$. It turns out that the use of $M$ as the smooth ambient space to contain $C(f)$ is not what we end up with eventually, but the deformation of graph is the key ingredient in obtaining an explicit expression for $\Ch (\mu)$.

The case $n =1$ was very well studied in the past two decades. We will first examine our strategy on this well understood ground. In this process, we will gather useful experiences which will carry us further in the general case. We will also obtain new proofs for known formulas about the characteristic cycles of hypersurfaces. This project is done in \S \ref{cchs}.
 
\vspace{0.5cm} 
\noindent
{\bf Acknowledgements}: First of all, I want to express a deep gratitude to Toru Ohmoto for his altruistic sharing of his manuscript \cite{TO} with me and inviting me twice to Hokkaido university to discuss topics around Theorem \ref{chern-mu}. Without his help, the research shown in the present paper cannot even be started. I also want to give sincere thanks to Professor Aluffi for his constant encouragement throughout years, and to James Fullwood for helpful discussions in multiple occasions. Finally, the key idea of this paper was conceived during a long and psychologically confusing period due to the uncertainty of employment situation. I want to thank God for the emotional support and divine friendship I received from my church in Korea.

\section{characteristic cycles of hypersurfaces}\label{cchs}

\subsection{}\label{known}
Hypersurfaces often arise in geometry in the following ways,
\begin{enumerate}
\item the hypersurface is defined by a holomorphic function $f$ on an open subset $M$ of $\Cbb^m$ (local case);
\item the hypersurface is defined by a section $s$ of a holomorphic line bundle $L$ on $M$ (global case);
\item the hypersurface is defined by $f^{-1}(y)$ where $f: M \to N$ is a holomorphic map, $y\in N$ and $\dim N =1$ (deformation of a hypersuface singularity). 
\end{enumerate} 

The characteristic cycles of hypersurfaces were well studied in case (1) and (2). I don't know any reference where (3) is explicitly studied. The reason for such vacancy is that, for most applications considered in the past, the interest is local on $N$ or $M$, i.e. about local deformation of hypersurface singularities or local Milnor fibrations, so that (3) and (1) make no difference in this local setup. Let me recall the well known results in the first two cases.

\begin{proposition}\label{local}\cite{MR1795550}
Let $(z_1,\ldots,z_m)$ be holomorphic coordinates on $M \subset \Cbb^m$. Let $X$ be the hypersurface defined by $f=0$, let $Y$ be defined by $(\frac{\p f}{\p z_1}, \ldots, \frac{\p f}{\p z_m})$ and let $\pi: \textup{Bl}_{Y}M \to M$ be the canonical projection. Note that there is a closed embedding of $\textup{Bl}_{Y}M$ into $\Pbf(T^*M)$. Denote the total transformation of $X$ under this blow-up by $\mathscr{X}$, and denote the exceptional divisor of $\pi$ by $\mathscr{Y}$. We have
\begin{enumerate}[(i)]
\item $[\Pbf \Ch (1_X)] = (-1)^{m-1}([\mathscr{X}]-[\mathscr{Y}])$;
\item $[\Pbf \Ch(\chi')] = (-1)^{m-1}[\mathscr{X}]$;
\item $[\Pbf \Ch(\mu)] = [\mathscr{Y}]$.
\end{enumerate}
\end{proposition}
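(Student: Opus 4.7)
My plan is to deduce all three identities from MacPherson's graph construction applied to the bundle morphism $df : f^{*}T^{*}\Cbb \to T^{*}M$. Since $f^{*}T^{*}\Cbb \cong \Osr_{M}$, this $df$ is essentially the section of $T^{*}M$ whose scheme-theoretic zero locus is $Y$. The family of graphs of $t \cdot df$, $t \in \Pbf^{1}$, lives inside the $\Pbf^{m}$-bundle $\Pbf(\Osr_{M} \oplus T^{*}M)$; its limit cycle at $t = \infty$ will, I claim, account simultaneously for all three projectivised characteristic cycles appearing in the proposition.

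First I would identify $\textup{Bl}_{Y}M$ with the closure in $\Pbf(T^{*}M)$ of the image of the rational map $z \mapsto [df(z)]$, embedded inside $\Pbf(\Osr_{M} \oplus T^{*}M)$ via $\Pbf(T^{*}M) \hookrightarrow \Pbf(\Osr_{M} \oplus T^{*}M)$. Under this identification the strict transform $\widetilde{X}$ of $X$ coincides with the closure of $\{(x,[df(x)]) : x \in X \setminus Y\}$, which is by construction the projectivised conormal variety $\Pbf T^{*}_{X}M$. Next, the total transform $\mathscr{X} = \pi^{*}[X]$ decomposes on $\textup{Bl}_{Y}M$ as $\mathscr{X} = \widetilde{X} + k\,\mathscr{Y}$, where $k = \textup{ord}_{\mathscr{Y}}(f \circ \pi) \ge 1$; hence $\mathscr{X} - \mathscr{Y} = \widetilde{X} + (k-1)\mathscr{Y}$, so the three cycles in the proposition differ only in how much exceptional divisor they carry. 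The sign $(-1)^{m-1}$ throughout is the standard convention relating the conormal of an $(m-1)$-dimensional variety to its characteristic cycle.

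With these geometric identifications in place, the heart of the proof is to attach each cycle to the correct constructible function. For (iii), I would invoke the homotopy-invariance consequence of the graph construction: the part of the limit cycle $\Gamma_{\infty}$ supported over $Y$ is the conic Lagrangian cycle computing $\mu$. Over an isolated singular point $x$ this is the classical length formula $\mu(x) = \dim_{\Cbb} \Osr_{M,x}/\textup{Jac}(f)$, which says the fibre of $\mathscr{Y}$ over $x$ is $\mu(x) \cdot \Pbf^{m-1}$; for non-isolated singularities it follows from the relative Gonz\'{a}lez-Sprinberg formula of Theorem \ref{nash-integral}. For (ii), I would identify the Nash part of $\Gamma_{\infty}$, which sits inside $\textup{Bl}_{Y}M$, with the cycle computing $\chi'(z) = \chi(F_{z})\,1_{X}(z)$, again via the same relative Gonz\'{a}lez-Sprinberg formula. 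Identity (i) then falls out of (ii) and (iii) together with the pointwise equality $1_{X} = \chi' - (-1)^{m-1}\mu$, obtained by multiplying \eqref{mudef} by $1_{X}$.

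\textbf{The main obstacle.} The delicate point is verifying that the multiplicities in the exceptional part of the graph-construction limit agree, component by component, with the generic values of $\mu$ on the non-isolated strata of $\textup{Sing}(X)$. Isolated critical points are covered by the length formula, but for non-isolated singularities this step is not self-contained: it requires the relative Gonz\'{a}lez-Sprinberg formula (Theorem \ref{nash-integral}) as a black box. The bulk of the argumentative work is thus supplied by the general theory developed earlier in the paper; once that machinery is available, identifying the limit of the graph with $\textup{Bl}_{Y}M$ and reading off the exceptional multiplicities is a formal exercise.
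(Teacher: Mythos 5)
Your route --- MacPherson's graph construction for $df\colon f^*T^*\Cbb\to T^*M$, a homotopy-invariance argument for the part of the limit cycle lying over $Y$, and the relative Gonz\'alez-Sprinberg formula for the part dominating $M$ --- is precisely the alternative derivation this paper develops in \S 2 (the proposition itself is only cited from the literature, but \S\S 2.2--2.5 together with Theorem \ref{nash-integral} re-prove it by exactly your strategy), and your deduction of (i) from (ii), (iii) and equation \eqref{mudef} is fine. But two steps you treat as formal are where the real content lies. The most serious is (ii): the Nash component of the limit of the graph construction is the $m$-dimensional variety $\textup{Bl}_YM$, whereas $\Pbf\Ch(\chi')$ must be the $(m-1)$-cycle $[\mathscr{X}]$. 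Passing from one to the other is not automatic: the natural guess $s\big(\pi^{-1}(z),\textup{Bl}_YM\big)=s\big(\pi^{-1}(z),\mathscr{X}\big)$ is \emph{false} (the paper gives the counterexample $M=\Cbb^2$, $f=xy$); only the integrals against $c(\zeta_M^{\vee})$ agree, and that equality is exactly the last clause of Theorem \ref{nash-integral} --- the one requiring $n=1$ or $\tilde{M}$ Cohen--Macaulay, proved via the properness/regular-sequence discussion of the Gysin map. So ``identify the Nash part with the cycle computing $\chi'$'' conceals the single genuinely delicate point; you must invoke the refined statement $\chi(z)=\int c(\tilde{T}_{M/N})\cap s\big(p^{-1}_{\tilde{M}}(z),p^{-1}_{\tilde{M}}M_{f(z)}\big)$ and then identify $p^{-1}_{\tilde{M}}(M_{f(z)})$ with $\mathscr{X}$, not merely cite the coarse form of the theorem.

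Secondly, the exceptional part of the graph limit is $\Pbf(C_YM\oplus f^*T^*\Cbb\vert_Y)$, living in $\Pbf(T^*M\oplus\Osr_M)$; it is the projectivised characteristic cycle of $\mu$ \emph{relative to the embedding} $M\times\{0\}\hookrightarrow M\times\Cbb$, and it is not the divisor $\mathscr{Y}=\Pbf(C_YM)\subset\Pbf(T^*M)$ appearing in (iii). To reach the statement as written you still need the compatibility of characteristic cycles with the codimension-one enlargement of the ambient manifold, converting $[\Pbf(C_YM\oplus 1)]$ into $[\Pbf(C_YM)]$ --- standard, but a step the paper is careful to flag rather than elide. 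Relatedly, your decomposition $\mathscr{X}=\widetilde{X}+k\,\mathscr{Y}$ with a single integer $k$ is incorrect when the exceptional locus is reducible: the true statement is componentwise, $[\mathscr{X}]=[\widetilde{X}]+\sum_i m_i[\mathscr{Y}_i]$ against $[\mathscr{Y}]=\sum_i e_i[\mathscr{Y}_i]$, and the positivity of $m_i-e_i$ is a nontrivial integral-dependence fact (cf.\ the paper's remark following the proposition and its discussion in \S\ref{misc}), not bookkeeping. Since this decomposition is not load-bearing for your proof of (i), the damage is limited, but as written it would mislead a reader about where the multiplicities come from.
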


\begin{remark}
In general, $\mathscr{Y}$ is not a analytic subspace of $\mathscr{X}$. However, $[\mathscr{X}] - [\mathscr{Y}]$ always have positive coefficients.
\end{remark}

\begin{remark}\label{mu-def}
In (ii), $\chi'$ is the constructible function defined by the Euler characteristic of the Milnor fibre. Here we think $\chi'$ as a constructible function on $X$. In other words, we only concern the Milnor fibres for points lying on $X$. However, later in this section, we will shift our perspective, and will study primarily another constructible function $\chi$ whose domain is $M$!

\begin{equation*}
\chi(p) = \chi\Big(B_{\epsilon}(p) \cap f^{-1}(q)\Big) 
\end{equation*}
where $p \in M$ and $q \in \Cbb$ is sufficiently close to $f(p)$. By definition $\chi(p) = \chi'(p)$ if $p \in Y$, and $\chi (p) = 1$ if $p \notin Y$. Note that $\chi - 1_M = (-1)^{m-1}\mu$ as constructible functions on $M$.
\end{remark}

\begin{proposition}\label{global}\cite{MR1795550}
Let $X$ be the hypersurface defined by a section $s$ of $L$. Let $Y'$ be the analytic subspace locally defined by $(f,\frac{\p f}{\p z_1}, \ldots, \frac{\p f}{\p z_m})$, where $f$ is an expression of $s$ in a local trivialisation of $L$. Denote the total transformation of $X$ under the blow-up $\pi: \textup{Bl}_{Y'}M \to M$ by $\mathscr{X'}$, and the exceptional divisor by $\mathscr{Y'}$. Then the projectivised characteristic cycles of the constructible functions $1,\chi',\mu$ take the same shape as those appearing in the proposition \ref{local}, provided we replace $\mathscr{X}$ and $\mathscr{Y}$ by $\mathscr{X'}$ and $\mathscr{Y'}$.
\end{proposition}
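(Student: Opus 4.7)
The plan is to reduce to Proposition \ref{local} by localising. Since the projectivised characteristic cycle is a local invariant, I would cover $M$ by opens $U$ trivialising $L$; on each such $U$, fix a trivialisation $L|_U\cong\Osr_U$, under which $s$ becomes a holomorphic function $f:U\to\Cbb$ with $V(f)=X\cap U$. Proposition \ref{local} then applies directly on $U$ and expresses the projectivised characteristic cycles of $1_X$, $\chi'$, $\mu$ in terms of the Jacobian ideal $Y=V(\partial f/\partial z_1,\ldots,\partial f/\partial z_m)$ and its associated blow-up cycles $\mathscr{X}$, $\mathscr{Y}$.

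First I would verify that the enriched ideal $(f,\partial f/\partial z_1,\ldots,\partial f/\partial z_m)$ defining $Y'$ is intrinsic to the pair $(L,s)$ and independent of the trivialisation. Indeed, under $f\mapsto\tilde f=uf$ with $u$ a unit, $\partial_i\tilde f=u\,\partial_i f+(\partial_i u)\,f$, so $(\tilde f,\partial_i\tilde f)=(uf,\,u\,\partial_i f)=u\cdot(f,\partial_i f)=(f,\partial_i f)$ as ideals. Consequently $Y'$ is a globally defined analytic subspace of $M$, and $\mathscr{X}'$, $\mathscr{Y}'$ are global cycles in $\Pbf(T^*M)$, the embedding over a neighbourhood of $X$ being supplied by the canonical projective section $[ds|_X]$ of $\Pbf(T^*M\otimes L)|_X=\Pbf(T^*M)|_X$.

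Next I would compare the unprimed and primed cycles on $U$. The key scheme-theoretic identity is $Y'=Y\cap X$, because $f$ is redundant modulo $I_X=(f)$. Therefore the blow-ups $\textup{Bl}_{Y}M$ and $\textup{Bl}_{Y'}M$ become isomorphic over $X$, and the cycles $\mathscr{X}$, $\mathscr{X}'$ — each being the total transform of the Cartier divisor $X$ under its respective blow-up — coincide in $\Pbf(T^*U)$; similarly the components of $\mathscr{Y}$, $\mathscr{Y}'$ projecting to $X$ coincide. Any discrepancy is localised over $Y\setminus X$, i.e.\ at critical points of the local $f$ lying off $X$; these are artefacts of the trivialisation with no intrinsic meaning for the section $s$. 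Since the constructible functions $1_X$, $\chi'$, and $\mu$ attached to the hypersurface $V(s)$ are supported over $X$, the identities of Proposition \ref{local} restrict to the identities of Proposition \ref{global} with $\mathscr{X}'$, $\mathscr{Y}'$.

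The hardest step will be articulating precisely why the off-$X$ components of $\mathscr{Y}$ contribute nothing globally — equivalently, why $\mu$ for the hypersurface $V(s)$ is supported over $X$, and not at trivialisation-dependent critical points of a local representative $f$. This amounts to the observation that the choice of trivialisation can only modify $Y$ away from $X$, whereas the global Milnor function depends only on the hypersurface germ. Once this is in hand, Proposition \ref{global} follows from Proposition \ref{local} by localisation and gluing, and $\mathscr{X}'$, $\mathscr{Y}'$ emerge as the correct globally defined replacements for $\mathscr{X}$, $\mathscr{Y}$.
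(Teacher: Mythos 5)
Your overall strategy (localise, reduce to Proposition \ref{local}, glue) is reasonable in spirit, and your opening computation that $(uf,\p_i(uf))=(f,\p_i f)$ for a unit $u$ — so that $Y'$ is intrinsic to $(L,s)$ — is correct and is indeed the reason $Y'$ rather than $Y$ is the right global object. Note, though, that the paper does not prove this proposition at all (it is quoted from \cite{MR1795550}); the paper's own later re-derivation in \S\ref{principal parts}--\S\ref{global deformation} is a genuinely different, global argument via the graph construction for the section $i_r(s)$ of the principal parts bundle $\mathcal{P}^1(\mathscr{L})$, which produces $[\textup{Bl}_{Y'}M]+[\Pbf(C_{Y'}M\oplus 1)]$ directly without ever comparing $Y$ with $Y'$.

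The genuine gap is in your comparison step. From $Y'=Y\cap X$ you infer that $\textup{Bl}_Y M$ and $\textup{Bl}_{Y'}M$ "become isomorphic over $X$" and that $\mathscr{X}$ and $\mathscr{X}'$ "coincide in $\Pbf(T^*U)$". This is false: blowing up the ideal $(\p_1 f,\ldots,\p_m f)$ and blowing up the strictly larger ideal $(f,\p_1 f,\ldots,\p_m f)$ produce genuinely different schemes in general (the two ideals differ whenever $f\notin(\p_1 f,\ldots,\p_m f)$, e.g.\ for any non-quasihomogeneous isolated singularity), and the paper explicitly cautions in \S\ref{deformation} that one \emph{cannot} expect $\mathscr{X}=\mathscr{X}'$ as analytic subspaces of $\Pbf(T^*M)$. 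What is true, and what your argument actually needs, is the cycle-level identity $[\mathscr{X}]=[\mathscr{X}']$ (and likewise for the relevant components of the exceptional divisors). That identity does not follow from $f$ being "redundant modulo $I_X$"; it rests on the nontrivial fact that $f$ is integrally dependent on its Jacobian ideal (a Brian\c{c}on--Skoda/Teissier-type statement, alluded to in \S\ref{misc} via the ideal $(z_1\frac{\p f}{\p z_1},\ldots,z_m\frac{\p f}{\p z_m})$), so that $(f,\p f)$ and $(\p f)$ have the same integral closure near $X$, the two blow-ups share a normalisation, and their total transforms and exceptional divisors push forward to equal cycles in $\Pbf(T^*M)$. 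Without supplying this integral-dependence input, the reduction of Proposition \ref{global} to Proposition \ref{local} does not go through.
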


\begin{remark}
In the case of Proposition \ref{local} we have

\begin{enumerate}[(i)]

\item $\displaystyle c_*(1_X) = c(TM\vert_{X}) \cap \pi_*(\frac{[\mathscr{X}] - [\mathscr{Y}]}{1+\mathscr{X}-\mathscr{Y}})$

\item $\displaystyle c_*(\chi') = c(TM\vert_{X}) \cap \pi_*(\frac{[\mathscr{X}]}{1+\mathscr{X}-\mathscr{Y}})$

\item $\displaystyle c_*(\mu) = (-1)^{m-1}c(TM\vert_{X}) \cap \pi_*(\frac{[\mathscr{Y}]}{1+\mathscr{X}-\mathscr{Y}})$

\end{enumerate}
while in the case of Proposition \ref{global} the same formulas hold provided we replace $\mathscr{X}$ and $\mathscr{Y}$ by $\mathscr{X'}$ and $\mathscr{Y'}$.

\end{remark}

\begin{remark}
We have a closed embedding $\textup{Bl}_{Y}M \to \Pbf(\mathcal{P}^{1}_M L)$, where $\mathcal{P}^{1}_M L$ is the bundle of principal parts of $L$ over $M$. There exists an exact sequence
\begin{equation*}
0 \to T^*M \otimes L \to \mathcal{P}^{1}_M L \to L \to 0,
\end{equation*}
which allows us to embed $\Pbf(T^*M)$ canonically in $\Pbf(\mathcal{P}^{1}_M L)$. One can see that $\mathscr{X}, \mathscr{Y} \subset \Pbf(T^*M)$ with regard to this embedding. Consequently we can legitimately regard $\mathscr{X}, \mathscr{Y}$ as projectivised conic Lagrangian cycles. The appearance of $\Pbf(\mathcal{P}^{1}_M L)$ is much more than an auxiliary construction. The natural relevance of $\Pbf(\mathcal{P}^{1}_M L)$ to the characteristic cycles in question will be explained in \S\ref{principal parts} and \S\ref{global deformation}.
\end{remark}

\subsection{}\label{deformation}
Let us focus on case $(1)$ of \S\ref{known} first. For a given $f: M \to \Cbb$, we consider a family of embeddings $i_t: M \to M \times \Cbb$ given by 
\begin{align*}
i_t: M& \to M \times \Cbb \\
z&\to(z,-tf(z)),  
\end{align*}
and the parameter $t$ of this family takes values in $\Cbb$. Equivalently, the isomorphic image $M_t := i_t(M)$ has the equation $y + tf(z) = 0$, where $y$ is the coordinate on the $\Cbb$ factor. The embedding $i_t$ gives us the conormal space $T^{*}_{M_t}(M\times \mathbb{C})$. The conormal direction at a given point $(z,y) \in M_t \subset M \times \Cbb$ is $dy + t(df)$.

On the other hand, we can consider the induced map on cotangent bundles $f^*T^*\Cbb \to T^*M$. The graph of this map is a rank one vector subbundle of  $f^*T^*\Cbb \oplus T^*M$. Each of its fibre is generated by $(f^*(dy),df) \in f^*T^*\Cbb \oplus T^*M \cong T^*(M \times \Cbb)\vert_{M_1}$. Moreover, when we deform the graph, following the procedure in MacPherson's graph construction (\cite{MR732620} chapter 18.1), the fibre of the deformed graph is generated by $(f^*(dy),tdf) \in f^*T^*\Cbb \oplus T^*M \cong T^*(M \times \Cbb)\vert_{M_t}$ where $t \in \Cbb$ is the deformation parameter. The deformed graph for the parameter $t$ will be denoted by $\Gamma_t$ in the rest of the paper.

The lesson we learn here is that, there is an isomorphism from the conormal space to the deformed graph
\begin{equation}\label{conorm-graph}
\begin{split}
T^{*}_{M_t}(M\times \mathbb{C}) &\to \Gamma_t \\
(z,t,dy + t df) &\mapsto (z,f^{*}dy + tdf),
\end{split}
\end{equation}
forgetting the $t$ coordinate. The family of conormal spaces contains slightly more information because it remembers how $M_t$ sits inside $M \times \Cbb$ as a family of subspaces.

For the sake of easy transition to case $(3)$ of \S\ref{known} later, we prefer not to trivialise $f^*T^*\Cbb$. We list some notations which we will use throughout this section.

\begin{enumerate}

\item For each $t \in \Pbf, t \neq \infty$, the manifold $M$ is embedded in $\Pbf(T^*M\oplus f^*T^*\Cbb)$ as $\Pbf(\Gamma_t)$. Denote by $\mathscr{M} \subset \Pbf(T^*M\oplus f^*T^*\Cbb) \times \Cbb$ the family of embeddings. We have $M \times \Cbb \cong \mathscr{M}$ and $\mathscr{M}_t = \Pbf(\Gamma_t) \subset \Pbf(T^*M\oplus f^*T^*\Cbb) \times \{t\}$ for $t \neq \infty$. Moreover, we have a map $\psi: \Pbf(T^*M\oplus f^*T^*\Cbb) \times \Cbb \to \Pbf(T^*(M \times \Cbb)) \times \Cbb$ defined by $(z,l,t) \mapsto (i_t(z),l,t)$ where $z \in M$ and $l$ is a line in the fibre of $T^*M\oplus f^*T^*\Cbb$ over $z$. Note that $l$ can also be regarded as a line in $T^{*}_{i_t(z)}(M \times \Cbb)$ so that this map is well defined. It is clear that $\psi(\mathscr{M}_t) = \mathbf{P}(T^{*}_{M_t}(M\times \mathbb{C}))$, giving the projectivised inverse of \eqref{conorm-graph}.

\item Let $\xi$ be the tautological (line) bundle on $\Pbf(T^*M\oplus f^*T^*\Cbb)$, and let $\pi$ be the canonical projection $\Pbf(T^*M\oplus f^*T^*\Cbb) \to M$. For $t \neq \infty$, the restriction of the left exact sequence $0 \to \xi \to \pi^*(T^*M\oplus f^*T^*\Cbb)$ on $\mathscr{M}_t \cong M$ is the left exact sequence $0 \to \Gamma_t \to T^*M\oplus f^*T^*\Cbb$. Let $\pr_1$ be the first projection $\Pbf(T^*M\oplus f^*T^*\Cbb) \times \Cbb \to \Pbf(T^*M\oplus f^*T^*\Cbb)$ and let $\Gamma =  \pr_1^*\xi \vert_{\mathscr{M}}$.

\end{enumerate}

The following diagram is a summary of the situation above.

\begin{displaymath}
\xymatrix{ \Gamma  \ar[r] \ar[d] & \pr_1^*\xi \ar[d]  \\
                 \mathscr{M} \ar[r] & \Pbf(T^*M\oplus f^*T^*\Cbb) \times \Cbb \ar[r]^{\psi} \ar[d]& \Pbf(T^*(M \times \Cbb)) \times \Cbb \ar[d] \\
                  & \Cbb \ar[r] & \Cbb
                  }
\end{displaymath}

The map $f^*T^*\Cbb \to T^*M$ fails to be left exact at the subspace $Y$ of $M$ defined by the ideal $(\frac{\p f}{\p z_1}, \ldots, \frac{\p f}{\p z_m})$. Let $\overline{\mathscr{M}}$ be the closure of $\mathscr{M}$ in $\Pbf(T^*M\oplus f^*T^*\Cbb) \times \Pbf^1$. The limit $\mathscr{M}_{\infty}$ is defined by the following Cartesian square 
\begin{displaymath}
\xymatrix{
                \mathscr{M}_{\infty} \ar[r] \ar[d] & \overline{\mathscr{M}} \ar[d] \\
                \{ \infty \} \ar[r]^j & \Pbf^1
}
\end{displaymath}
and the limiting cycle $[\mathscr{M}_{\infty}]$ is defined by $j^{!}[\overline{\mathscr{M}}]$. By \cite{MR732620} example 18.1.6 (d), we have 
\begin{equation}\label{limitgraph}
[\mathscr{M}_{\infty}] = [\textup{Bl}_Y M] +[\Pbf(C_YM\oplus f^*T^*\Cbb\vert_Y)],
\end{equation}
where $C_YM$ is the normal cone to $Y$ in $M$.

Similarly, we can consider $\overline{\psi(\mathscr{M})}$ in $\Pbf(T^*(M \times \Cbb)) \times \Pbf^1$. The Lagrangian specialisation at $\infty$ is denoted by $[\psi(\mathscr{M})_{\infty}]$. By definition, $[\psi(\mathscr{M})_{\infty}] = j^![\overline{\psi(\mathscr{M})}]$. We will prove later that if $f^{-1}(0)$ is the only singular fibre, then
\begin{equation}\label{limitlagrangian}
[\psi(\mathscr{M})_{\infty}] = [\mathbf{P}(C_YM \oplus f^*T^*\Cbb\vert_{Y})] + [\mathscr{X}' \times \Cbb],
\end{equation}
where $C_YM \oplus f^*T^*\Cbb\vert_{Y}$ is regarded as a cone over $Y \times \{0\}$. Recall Proposition \ref{global} that $\mathscr{X}'$ is the total transform of $X$ in $\textup{Bl}_{Y'}M$. We cannot expect $\mathscr{X} = \mathscr{X}'$ as analytic subspaces of $\mathbf{P}(T^{*}M)$ but we have $[\mathscr{X}] = [\mathscr{X}']$. This can be deduced from the fact that $\textup{Bl}_{Y'}M$ and $\textup{Bl}_{Y}M$ have the same normalisation. Therefore formula \eqref{limitlagrangian} can also be written as
\begin{equation*}
[\psi(\mathscr{M})_{\infty}] = [\mathbf{P}(C_YM \oplus f^*T^*\Cbb\vert_{Y})] + [\mathscr{X} \times \Cbb]
\end{equation*}

By proposition \ref{local} (applied to the hypersurface $X \times \Cbb$ in $M \times \Cbb$), $ [\mathscr{X} \times \Cbb]$ is the projectivised characteristic cycle of the function on $X \times \mathbb{C}$ given by$(z,t) \mapsto (-1)^{m}\chi(z)$. 

We believe the following figures will help the reader to visualise the deformation process and understand especially why there is a common piece appearing in both limits. It is also clear from these figures that the deformation to the normal cone (Remark 5.1.1 \cite{MR732620}) can be fused into our synthetic view for graph construction and Lagrangian specialisation.

\begin{figure}[!h]
\begin{tikzpicture}[allow upside down]
  \draw (-2,0) -- (2,0) node[right] {$M \times \{0\}$};
  \draw [->] (0,0) -- (0,3) node[left] {$\mathbb{C}$};
  \draw (0,0) .. controls (1,0) and (1.5,0.8) .. (2,2)
  \foreach \p in {30,45,65,80,90} {
   node[sloped,inner sep=0cm,above,pos=\p*0.01,
      anchor=south west,
      minimum height=(10)*0.04cm,minimum width=(10)*0.04cm]
      (N \p){}
     } node[right] {$M_1$}
     ;
      \foreach \p in {30,45,65,80,90} {
      \draw[-latex] (N \p.south west) -- (N \p.north west);
    }
    
  \draw (0,0) .. controls (-1,0) and (-1.5,0.8) .. (-2,2);
 
  \draw (0,0) .. controls (0.5,0) .. (1,3)
  \foreach \p in {30,60,70,80,90} {
   node[sloped,inner sep=0cm,above,pos=\p*0.01,
      anchor=south west,
      minimum height=(10)*0.04cm,minimum width=(10)*0.04cm]
      (N2 \p){}
     } node[right] {$M_2$}
     ;
      \foreach \p in {30,60,70,80,90} {
      \draw[-latex] (N2 \p.south west) -- (N2 \p.north west);
    }
  
   \draw (0,0) .. controls (-0.5,0) .. (-1,3);

\end{tikzpicture}
\caption{Lagrangian specialisation. Some conormal vectors are marked.}
\end{figure}
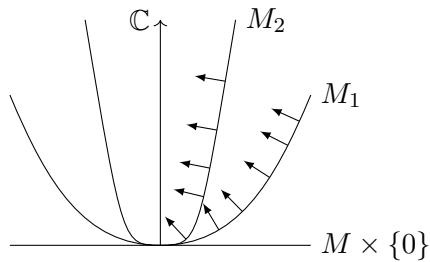

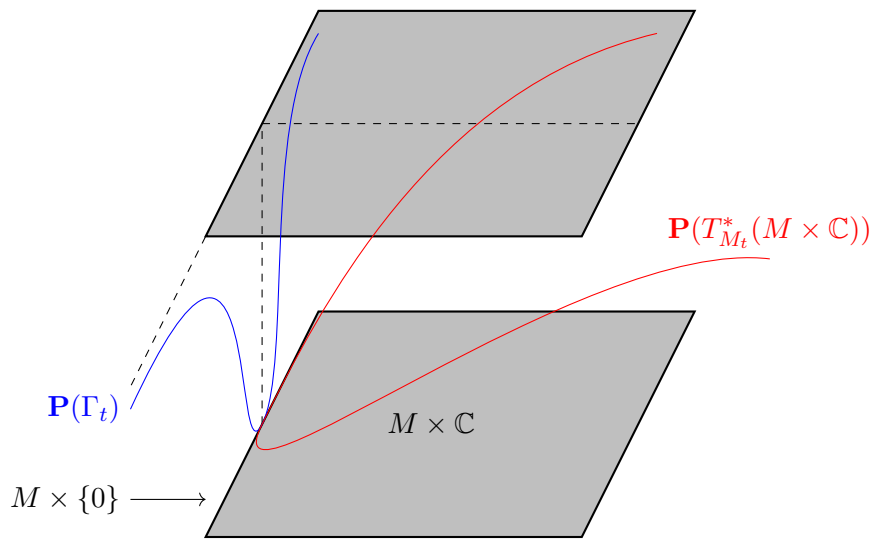
\begin{figure}[!h]
\begin{tikzpicture}
\coordinate (c1) at (0,0);
\coordinate (c2) at (5,0);
\coordinate (c3) at (6.5,3);
\coordinate (c4) at (1.5,3);
\coordinate (m1) at (0.75,1.5);

\coordinate (c5) at (0,4);
\coordinate (c6) at (5,4);
\coordinate (c7) at (6.5,7);
\coordinate (c8) at (1.5,7);
\coordinate (m2) at (0.75,5.5);
\coordinate (m3) at (5.75,5.5);

\coordinate (e2) at (1.5,6.7);
\coordinate (e1) at (-1,1.7);
\coordinate (f2) at (6,6.7);
\coordinate (f1) at (7.5,3.7);

\draw [thick,-,fill=lightgray] (c1) -- (c2) -- (c3) -- (c4) -- cycle;
\draw [thick,-,fill=lightgray] (c5) -- (c6) -- (c7) -- (c8) -- cycle;

\draw [dashed] (m1) -- (m2);
\draw [color=blue] (m1) .. controls +(0.4,0.8) and (m2) .. (e2);
\draw [dashed] (c5) -- (-1,2);
\draw [color=blue] (m1) .. controls +(-0.4,-0.8) and (m2) .. (e1) node(graph)[left] {$\mathbf{P}(\Gamma_t)$};

\draw [dashed] (m2) -- (m3);

\draw [color=red] (m1) .. controls (c4) and (3,6) .. (f2);
\draw [color=red] (m1) .. controls (c1) and (c6) .. (f1) node(conorm)[above] {$\mathbf{P}(T^{*}_{M_t}(M \times \mathbb{C}))$};

\node at (3,1.5) {$M \times \mathbb{C}$};

\draw [->] (-1,0.5) -- (0,0.5);
\node at (-1,0.5)[left] {$M \times \{0\}$};

\end{tikzpicture}
\caption{$\mathbf{P}(T^{*}_{M_t}(M \times \mathbb{C}))$ v.s. $\mathbf{P}(\Gamma_t)$. The vertical direction represents the cotangent directions of $M$.}
\label{f2}
\end{figure}

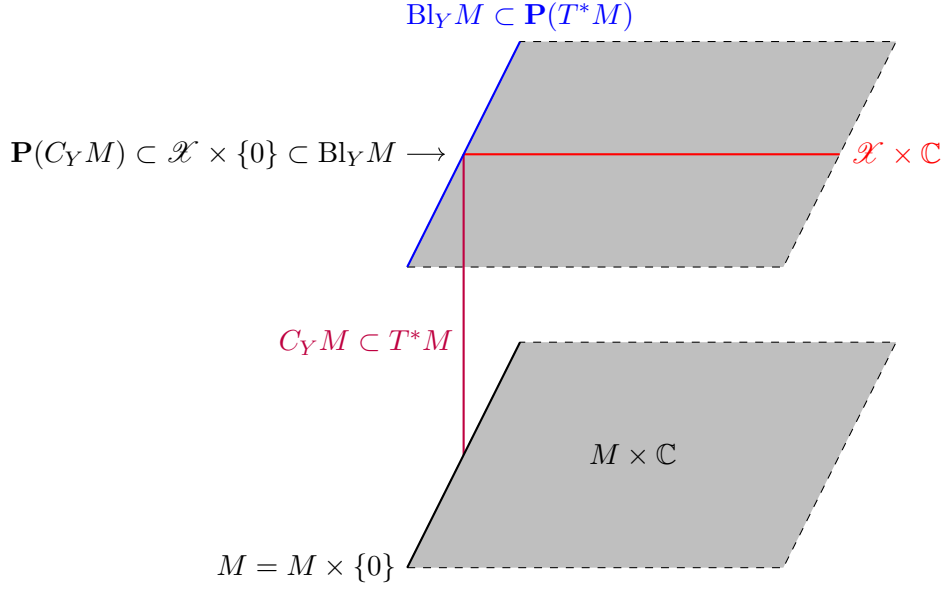
\begin{figure}[!h]
\begin{tikzpicture}
\coordinate (c1) at (0,0);
\coordinate (c2) at (5,0);
\coordinate (c3) at (6.5,3);
\coordinate (c4) at (1.5,3);
\coordinate (m1) at (0.75,1.5);

\coordinate (c5) at (0,4);
\coordinate (c6) at (5,4);
\coordinate (c7) at (6.5,7);
\coordinate (c8) at (1.5,7);
\coordinate (m2) at (0.75,5.5);
\coordinate (m3) at (5.75,5.5);

\draw [dashed,-,fill=lightgray] (c1) -- (c2) -- (c3) -- (c4) -- cycle;
\draw [dashed,-,fill=lightgray] (c5) -- (c6) -- (c7) -- (c8) -- cycle;

\draw [thick,color=red] (m2) -- (m3) node[right] {$\mathscr{X} \times \mathbb{C}$};
\draw [thick,color=purple] (m2) -- (m1);
\draw [thick,color=blue] (c5) -- (c8) node[above] {$\textup{Bl}_Y{M} \subset \mathbf{P}(T^*M)$};
\node at (0.75,3)[left,color=purple] {$C_YM \subset T^*M$};

\draw [thick] (c4) -- (c1) node[left] {$M = M \times \{0\}$};
\node at (3,1.5) {$M \times \mathbb{C}$};
\draw [->] (0,5.5) -- (0.5,5.5);
\node at (0,5.5)[left] {$\mathbf{P}(C_YM) \subset \mathscr{X} \times \{0\} \subset \textup{Bl}_Y{M}$};

\end{tikzpicture}
\caption{The deformation limit of $\mathbf{P}(T^{*}_{M_t}(M \times \mathbb{C}))$ v.s. the deformation limit of $\mathbf{P}(\Gamma_t)$. }
\label{f3}
\end{figure}

\begin{remark}
In general $|Y| \not \subset |X|$, i.e. there might be singular fibres other than $f^{-1}(0)$. Let $Y_0 = Y \setminus \{f \neq 0 \}$ be the singularities of $f$ belonging to $f^{-1}(0)$. The formula \eqref{limitlagrangian} for the limit of the family $\mathbf{P}(T^{*}_{M_t}(M \times \mathbb{C}))$ still holds, provided we replace $C_YM$ by $C_{Y_0}M$.
\end{remark}

\subsection{}\label{computelag}
Let us take a digression and finish the computation of the Lagrangian specialisation. We assume $f^{-1}(0)$ is the only singular fibre for simplicity. The figures in \S \ref{deformation} have already demonstrated vividly that the support of the limit projectivised Lagrangian cycle must be $|\mathscr{X} \times \mathbb{C}| \cup |\mathbf{P}(C_YM \oplus 1)|$. The question is only about the multiplicity of each irreducible component.

The homogeneous ring of $\psi(\mathscr{M})$ is 
\begin{equation*}
\mathscr{O}_{M \times \mathbb{C}}[\alpha,\beta_1,\ldots,\beta_m][t]/(y+tf(z),\ldots),
\end{equation*}
where $t$ is the deformation parameter; $y,z_1,\ldots, z_m$ are the coordinates on $M \times \mathbb{C}$ and $\alpha,\beta_1,\ldots,\beta_m$ are the coordinates for the corresponding cotangent directions. The ring is partially graded with $\deg(\alpha) = \deg(\beta_1) = \ldots, = \deg(\beta_m) = 1$. The `$\ldots$' consists of polynomials $h$ with coefficients in $\mathscr{O}_{M\times \mathbb{C}}$ and homogeneous in $\alpha, \beta_1,\ldots,\beta_m$, such that $h(1,t\frac{\p f}{\p z_1},\ldots,t\frac{\p f}{\p z_m}) = 0$.

To calculate the open subset of $\overline{\psi(\mathscr{M})}$ containing $\psi(\mathscr{M})_{\infty}$, we replace $t$ by $\frac{1}{s}$ in all relations defining $\psi(\mathscr{M})$, and then we multiply these new relations by powers of $s$ to clear denominators. Following this procedure, we see that the ideal of (the subset of) $\overline{\psi(\mathscr{M})}$ in $\mathscr{O}_{M \times \mathbb{C}}[\alpha,\beta_1,\ldots,\beta_m][s]$ has the following generators:
\begin{enumerate}[(a)]
\item $sy + f(z)$;
\item homogenous polynomials $h$ in $\mathscr{O}_{M \times \mathbb{C}}[\alpha,\beta_1,\ldots,\beta_m]$ such that $h(s,\frac{\p f}{\p z_1},\ldots,\frac{\p f}{\p z_m}) = 0$.
\item if a power of $s$ can be factored out from a linear combination of polynomials coming from (a) and (b), then dividing out this power of $s$, the remaining term is a generator of $\overline{\psi(\mathscr{M})}$.
\end{enumerate}

Let us introduce some notations first.

\begin{itemize}

\item Denote the complex line with coordinate function $y$ ($s$) by $\mathbb{C}_y$ ($\mathbb{C}_s$).
\item Let $\reallywidetilde{M \times \mathbb{C}_y \times \mathbb{C}_s}$ be the blow up of $M \times \mathbb{C}_y \times \mathbb{C}_s$ along $Y \times \mathbb{C}_y \times \{0\}$, and let $\phi:\reallywidetilde{M \times \mathbb{C}_y \times \mathbb{C}_s} \to M \times \mathbb{C}_y \times \mathbb{C}_s$ be the blowup map. Let $\reallywidetilde{M \times \mathbb{C}_s}$ be the blowup of  $M \times \mathbb{C}_s$ along $Y \times \{0\}$. Clearly $\reallywidetilde{M \times \mathbb{C}_y \times \mathbb{C}_s} = \reallywidetilde{M \times \mathbb{C}_s} \times \mathbb{C}_y$.
\item Let $A$, $B$, $C$, $D$ be the analytic subspaces of $M \times \mathbb{C}_y \times \mathbb{C}_s$ defined respectively by the ideals $\Big(sy + f(z)\Big)$, $\Big(sy + f(z),s\Big) = \Big(s,f(z)\Big)$, $\Big(s, f(z), \frac{\p f}{\p z_1},\ldots,\frac{\p f}{\p z_m}\Big)$ and $(s)$. Also let $Y'$ be the subspace of $M$ defined by $(f(z), \frac{\p f}{\p z_1},\ldots,\frac{\p f}{\p z_m})$. We have $X \times \mathbb{C}_y \times \{0\} = B$ and $Y' \times \mathbb{C}_y \times \{0\} = C$. 
\item Let $\tilde{A}$ ($\tilde{B}$) be the proper transform of $A$ ($B$) in $\reallywidetilde{M \times \mathbb{C}_y \times \mathbb{C}_s}$.

\end{itemize}

With these notations, It is clear that the generators in (b) defines $\reallywidetilde{M \times \mathbb{C}_y \times \mathbb{C}_s}$, and the open subset of $\overline{\psi(\mathscr{M})}$ containing $\psi(\mathscr{M})_{\infty}$ can be identified with $\tilde{A}$. Therefore $\psi(\mathscr{M})_{\infty} = \phi^{-1}(D) \cap \tilde{A} = \phi^{-1}(B) \cap \tilde{A}$. Moreover, if we identify $\tilde{A}$ with $\textup{Bl}_{C}A$, then $\phi^{-1}(B) \cap \tilde{A}$ can be identified with the total transform of $B$ in $\textup{Bl}_{C}A$. 

Our goal is to understand $[\phi^{-1}(D) \cap \tilde{A}]$. We claim that one part of $[\phi^{-1}(D) \cap \tilde{A}]$ is $[\mathscr{X}' \times \mathbb{C}_y]$. In fact, the open subset $U$ of $A$ defined by $y \neq 0$ is isomorphic to $M \times \mathbb{C}^{*}_y$ ($sy+f(z) = 0$ implies that $s = -\frac{1}{y}f(z)$), and $C \cap U$ is defined by $\Big(f(z), \frac{\p f}{\p z_1},\ldots,\frac{\p f}{\p z_m}\Big)$ so that $C \cap U$ is isomorphic to $Y' \times \mathbb{C}^{*}_y$. If we make the base change to $U$ for $\textup{Bl}_CA$, we then get an open subset of $\textup{Bl}_CA$ isomorphic to $\textup{Bl}_{Y'}M \times \mathbb{C}^{*}_y$. The ideal of $B$ in $U$ becomes $(f(z))$. So the total transform of $B$ is just $\mathscr{X'} \times \mathbb{C}^{*}_y$ in this open subset.

Next, let $\reallywidetilde{M \times \mathbb{C}_s} \times \mathbb{P}_y$ be the completion of $\reallywidetilde{M \times \mathbb{C}_s} \times \mathbb{C}_y$ along the $y$-axis, and let $\textup{pr}_1: \reallywidetilde{M \times \mathbb{C}_s} \times \mathbb{P}_y \to \reallywidetilde{M \times \mathbb{C}_s}$ be the first projection. Also let $\overline{\tilde{A}}$ be the closure of $\tilde{A}$ in $\reallywidetilde{M \times \mathbb{C}_s} \times \mathbb{P}_y$. The projection $\textup{pr}_1$ restricts to a proper modification $\overline{\tilde{A}} \to \reallywidetilde{M \times \mathbb{C}_s}$ because both spaces contain an open subset isomorphic to $M \times \mathbb{C}^{*}_s$. Finally, we let $D'$ be the principal divisor on $\reallywidetilde{M \times \mathbb{C}_s}$ defined by $(s)$. With the help of Figure \ref{f2} and \ref{f3}, it is not hard for one to conclude that 
\begin{equation*}
\textup{pr}^{*}_1(D') \cap [\overline{\tilde{A}}] = \gamma_1 + [\mathscr{X}' \times \mathbb{P}_y] + \gamma_2
\end{equation*}
where $\gamma_1$ is a cycle whose support is contained in $\mathbf{P}(C_YM \oplus 1)$ and $\gamma_2$ is a cycle whose support is contained in $\textup{Bl}_YM \times \{\infty_y\}$. In Figure \ref{f3}, the right edge of the parallelogram on the top can be thought of as $\textup{Bl}_YM \times \{\infty_y\}$.

It is clear that $\textup{pr}_{1*}$ restricts to the identity on $\gamma_1$ and $\gamma_2$, and it restricts to $0$ on $[\mathscr{X} \times \mathbb{P}_y]$. On the other hand, we have
\begin{equation*}
\textup{pr}_{1*}\Big(\textup{pr}^{*}_1(D') \cap [\overline{\tilde{A}}]\Big) = D' \cap [\reallywidetilde{M \times \mathbb{C}_s}] = [\mathbf{P}(C_YM \oplus 1)] + [\textup{Bl}_YM]
\end{equation*}
where the first equality uses the projection formula and the second equality follows from the deformation to the normal cone construction (\cite{MR732620} \S 5.1). Therefore we get $\gamma_1 = [\mathbf{P}(C_YM \oplus 1)]$ and $\gamma_2 = [\textup{Bl}_YM]$. Clearly $\gamma_2$ is the extra part coming from the completion, so we obtain
\begin{equation*}
[\phi^{-1}(D) \cap \tilde{A}] = \gamma_1 + [\mathscr{X}' \times \mathbb{C}_y] = [\mathbf{P}(C_YM \oplus 1)] + [\mathscr{X}' \times \mathbb{C}_y].
\end{equation*}

As what usually happens on the deformation limit, one should expect that there exists embedded components on $\psi(\mathscr{M})_{\infty}$. Formula \eqref{limitlagrangian} can only be true at the level of cycles. However, as explained in loc.cit., $\mathscr{M}_\infty$ can be regarded as the union of the subspaces $\textup{Bl}_Y M$ and $\Pbf(C_YM\oplus 1)$.

\subsection{}
The computation of the Lagrangian specialisation is not necessary for our discussion below. We have done the computation for the purpose of showing the relation and difference between the graph deformation and Lagrangian deformation. Because the 1-parameter deformation limit of s is still a , we can legitimately view $[C \oplus f^*T^*\Cbb\vert_{Y}]$ as a conic Lagrangian cycle  living in $(T^*M \oplus f^*T^*\Cbb)\vert_{Y} \cong T^*(M \times \Cbb)\vert_{Y\times \{0\}}$. We will show without using Proposition \ref{local} that it is the  of $\mu$ with regard to the embedding $M = M \times \{ 0 \} \to M \times \Cbb$. Here $\mu$ is treated as a constructible function on $M$ rather than $X$. Note that this perspective is natural for our final purpose, because when considering the deformation of a complete intersection singularity germ $f: (M,0) \to (N,0)$, the function $\mu$ can take non-zero values outside $f^{-1}(0)$.

To begin, we need to recall the process of associating a constructible function with a conic Lagrangian cycle . Given a complex manifold $M$ of dimension $m$, let $\pi_M: \Pbf(T^*M) \to M$ be the canonical projection and let
\begin{equation*}
0 \to \xi_M \to \pi_M^*T^*M \to \zeta_M \to 0
\end{equation*} 
be the standard sequence on $\Pbf(T^*M)$ defining the tautological subbundle $\xi_M$ (of rank $1$) and quotient bundle $\zeta_M$ (of rank $m-1$). From the isomorphism $\Pbf(T^*M) \cong \Pbb(T_M)$, we know that the dual sequence
\begin{equation*}
0 \to \zeta^{\vee}_M \to \pi_M^*(T_M) \to \xi^{\vee}_M \to 0
\end{equation*} 
defines the tautological subbundle $\zeta^{\vee}_M$ (of rank $m-1$) of hyperplanes for $\Pbb(T_M)$. If $\gamma$ is a conic Lagrangian cycle in $T^{*}M$, it is well-known that 
\begin{equation*}
\gamma = \sum_i k_i[T^{*}_{W_i}M]
\end{equation*}
for some analytic subvarieties $W_i \subset M$ and some integers $k_i$. The homology class associated to $\gamma$ is 
\begin{equation*} 
\sum_i (-1)^{m-1}k_i\pi_{M*}\Big(c(\zeta^{\vee}_M)\cap [\mathbf{P}(T^{*}_{W_i}M)]\Big),
\end{equation*} 
and the construction function associated with $\gamma$ is
\begin{equation*}
f_{\gamma}(z) = \sum_i \int (-1)^{m-1}k_ic(\zeta^{\vee}_M)\cap s\Big(\pi_M^{-1}(z) \cap \mathbf{P}(T^{*}_{W_i}M), \mathbf{P}(T^{*}_{W_i}M)\Big).
\end{equation*} 

Notably, when $W$ is a $d$-dimensional subvariety of $M$ and $\gamma = (-1)^d [T^{*}_WM]$, the homology class associated with $\gamma$ is the Chern-Mather class of $W$ and the constructible function associated with $\gamma$ is $\textup{Eu}_W$, the local Euler obstruction for $Z$.

In particular, if $V$ be a purely $m$-dimensional complex subspace of $T^*M$ whose associated cycle $[V]$ is Lagrangian, then homology class associated to $[V]$ is
\begin{equation*} 
(-1)^{m-1}\pi_{M*}\Big(c(\zeta^{\vee}_M)\cap[\Pbf(V)]\Big),
\end{equation*} 
which is also the dual class of $\pi_*(c(\zeta_M)\cap [\Pbf(V)])$. By a formula about the Segre class (\cite{MR732620} Lemma 4.2), the constructible function $f_V$ associated with $V$ can be written as
\begin{equation*}
f_V(z) = \int (-1)^{m-1}c(\zeta^{\vee}_M)\cap s\big(\pi_M^{-1}(z) \cap \Pbf(V), \Pbf(V)\big),
\end{equation*} 
where $z\in M$. However, we caution the readers that we {\bf{can't}} compute this value by
\begin{equation*}
f_V(z) = \int c(\zeta_M)\cap s\big(\pi_M^{-1}(z) \cap \Pbf(V), \Pbf(V)\big)
\end{equation*} 
because the Segre class $s\big(\pi_M^{-1}(z) \cap \Pbf(V), \Pbf(V)\big)$ is not a priori purely $(m-1)$-dimensional.

\subsection{}\label{microlocal}
Since the notations are getting complicated, let us recollect notations we have introduced earlier and define some new ones.
\begin{itemize}
\item $\xi, \tilde{\xi}, \xi_M$ are tautological subbundles of $\Pbf(T^*M \oplus f^*T^*\Cbb), \Pbf(T^*(M \times \Cbb)), \Pbf(T^*M)$ respectively. Similarly $\zeta, \tilde{\zeta}, \zeta_M$ are tautological quotient bundles of the corresponding spaces.
\item $\pi, \tilde{\pi}, \pi_M$ are canonical projections from $\Pbf(T^*M \oplus f^*T^*\Cbb), \Pbf(T^*(M \times \Cbb)), \Pbf(T^*M)$.
\item $p$ is the projection $\textup{Bl}_Y M \to M$.
\end{itemize}

According to our discussions by far, we wish to show
\begin{equation*}
\mu(z) = \int (-1)^{m}c(\tilde{\zeta}^{\vee})\cap s\Big(\tilde{\pi}^{-1}(z) \cap \Pbf(C_YM\oplus f^*T^*\Cbb\vert_{Y}),\Pbf(C_YM\oplus f^*T^*\Cbb\vert_{Y})\Big),
\end{equation*}
where $z \in M \times \{ 0 \} \subset M \times \Cbb$. The power of $-1$ is $m$ instead of $m-1$ because the nonsingular ambient space we choose for our embedding is $M \times \Cbb$, which has dimension $m+1$. We note that $T^*M \oplus f^*T^*\Cbb \cong \Omega^1_{M\times \Cbb}\vert_{M\times \{0\}}$. It follows that $\zeta \cong \tilde{\zeta}\vert_{\tilde{\pi}^{-1}(M \times \{0\})}$. Because $\Pbf(C_YM\oplus f^*T^*\Cbb\vert_{Y}) \subset \tilde{\pi}^{-1}(M \times \{0\})$, so equivalently we must show
\begin{equation*}
\mu(z) = \int (-1)^{m}c(\zeta^{\vee})\cap s\Big(\pi^{-1}(z) \cap \Pbf(C_YM\oplus f^*T^*\Cbb\vert_{Y}),\Pbf(C_YM\oplus f^*T^*\Cbb\vert_{Y})\Big).
\end{equation*}

By the definition of $\mu$ given in Remark \ref{mu-def} and equation \eqref{limitgraph}, it is enough to show
\begin{equation*}
1 = 1_M(z) = \int c(\zeta^{\vee})\cap s\big(\pi^{-1}(z) \cap \mathscr{M}_{\infty}, \mathscr{M}_{\infty}\big)
\end{equation*}
and 
\begin{equation}\label{chi}
\chi(z) = \int c(\zeta^{\vee})\cap s\big(\pi^{-1}(z) \cap \textup{Bl}_Y M, \textup{Bl}_Y M\big)
\end{equation}
for any $z \in M$.

Since we have a family $\overline{\mathscr{M}} \subset \Pbf(T^*M \oplus f^*T^*\Cbb) \times \Pbf^1$, the tautological bundle $\textup{pr}^{*}_1\zeta$ can be viewed as a family of bundles $\zeta_t = \zeta\vert_{\mathscr{M}_t}$. On each $\mathscr{M}_t \cong M$ ($t \neq \infty$), we have a short exact sequence
\begin{equation*}
0 \to \Gamma_t \to T^*M \oplus f^*T^*\Cbb \to \zeta_t \to 0.
\end{equation*}
In particular, we have $\zeta_0 \cong T^*M$, $\zeta\vert_{\Pbf(T^*M)} \cong \zeta_M \oplus \pi_M^*f^*T^*\Cbb$. Now, we have
\begin{align*}
 & \int c(\zeta_{\infty}^{\vee})\cap s\big({\pi}^{-1}(z) \cap \mathscr{M}_{\infty}, \mathscr{M}_{\infty}\big) \\
 = & \int c(\zeta_{0}^{\vee})\cap s\big({\pi}^{-1}(z) \cap \mathscr{M}_{0}, \mathscr{M}_{0}\big) \\
 = & \int c(TM) \cap s(z,M) \\
 = & 1
\end{align*}
by \cite{MR732620} Corollary 6.5 and example 4.1.6 (b). 

In \S \ref{complete intersections} we will establish a general formula for the Euler characteristic of the Milnor fibre for a map sans \'{e}clatment en codimension 0. Equation \eqref{chi} follows from Theorem \ref{nash-integral}. Hence we see from this easy deformation argument that $\mathbf{P}(C_YM\oplus f^*T^*\Cbb\vert_{Y})$ is the projectivised characteristic cycle of $\mu$ with respect to the embedding $M \times\{0\} \to M \times \mathbb{C}$.

\begin{remark}
The following is one attempt to show equation \eqref{chi}.
\begin{align*}
& \int c(\tilde{\zeta}^{\vee})\cap s\big(\pi^{-1}(z) \cap  \textup{Bl}_Y M, \textup{Bl}_Y M\big) \\
=& \int c(\zeta_M^{\vee}) c(p^*f^*T\Cbb) \cap s\big(p^{-1}(z), \textup{Bl}_Y M\big) \\
=& \int c(\zeta_M^{\vee}) \cap s\big(p^{-1}(z), \textup{Bl}_Y M\big),
\end{align*}
and if we had 
\begin{equation}\label{q1}
s\big(p^{-1}(z), \textup{Bl}_Y M\big) = s\big(p^{-1}(z), \mathscr{X}\big),
\end{equation}
we would obtain the correct result according to proposition \ref{local} (ii). Note that equation \eqref{q1} indeed has a plausible shape, because the normal bundle to $\mathscr{X}$ in $\textup{Bl}_Y M$ is trivial. Unfortunately, the example $M = \Cbb^2$ and $f=xy$ shows that equation \eqref{q1} is wrong. 

Indeed, in this example, $p^{-1}(0)$ is the exceptional divisor $E$ of the blowup.  We have $s\big(E, \textup{Bl}_Y M\big) = c(\mathscr{O}(E))^{-1} \cap [E] = [E] +[pt]$ and $s\big(E, \mathscr{X}\big) = 2[E] + 2[pt]$. 

On the other hand, we have $\mathbf{P}(T^{*}M) \cong \mathbb{C}^2 \times \mathbb{P}^1$, $\zeta_M \cong \mathscr{O}(1)$, so $\zeta_M \vert_E \cong \mathscr{O}_E(-E)$. And 
\begin{equation*}
\int c(\zeta_M^{\vee}) \cap s\big(p^{-1}(z), \textup{Bl}_Y M\big) = \int c(\mathscr{O}_E(E)) \cap ([E] + [pt]) = 0.
\end{equation*}
Similarly,
\begin{equation*}
\int c(\zeta_M^{\vee}) \cap s\big(p^{-1}(z), \mathscr{X}\big) = 0.
\end{equation*}
Therefore we have
\begin{equation*}
\int c(\zeta_M^{\vee}) \cap s\big(p^{-1}(z), \textup{Bl}_Y M\big) = \int c(\zeta_M^{\vee}) \cap s\big(p^{-1}(z), \mathscr{X}\big)
\end{equation*}
in this example. In Theorem \ref{nash-integral}, we will show that such equality always holds when $\dim N = 1$. So our method will provide an alternative proof for the fact that $(-1)^{m-1}[\mathscr{X}]$ is the projectivised characteristic cycle for $\chi'$. However, this seems to be a special phenomenon for hypersurfaces. When $\dim N > 1$, we cannot deduce from our method that $p^{-1}_{\tilde{M}}(M_{f(z)})$ computes $\chi(z)$ (in the notation of Theorem \ref{nash-integral}). See also \cite{MR732620} Example 4.2.7 and 4.2.8 for a discussion on when an equality of type \eqref{q1} can be correct. 

\end{remark}

\subsection{}\label{misc}

We can apply the same technic to the equation \eqref{limitlagrangian}. Having seen that 
\begin{itemize}
\item $[\mathscr{X} \times \Cbb]$ is the characteristic cycle of the function $(z,t) \mapsto (-1)^m\chi(z)$ for $z\in X$;
\item $[\mathbf{P}(C \oplus f^*T^*\Cbb\vert_{Y})]$ is the characteristic cycle of the function $(z,0) \mapsto \mu(z)$ for $z\in Y$;
\item $[\mathbf{P}(\Gamma_t)]$ is the characteristic cycle of the function $(-1)^m1_{M_t}$,
\end{itemize}
then equation \eqref{limitlagrangian} gives us the relation $(-1)^m\chi(z) + \mu(z) = (-1)^m$ where $z \in Y = Y \times \{0\}$. Again, the integration over $\Pbf(\Gamma_{\infty})$ can be turned into the integration over $\Pbf(\Gamma_{t})$ by deforming inside $\Pbf(T^*(M \times \Cbb))$.

We see that there are two deformation processes for $M$. The Lagrangian deformation is more or less what we ought to do, following the general theory of Lagrangian specialisation, and is closer to many former approaches to the characteristic cycles of hypersurfaces. Indeed, one may try to compute the ideal defining $\tilde{A}$ in \S \ref{computelag}. In doing so, one will quickly run into some arguments involving the integral dependence of $f$ on the ideal $(z_1\frac{\p f}{\p z_1},\ldots,z_m\frac{\p f}{\p z_m})$ (\cite{MR0568901} \S 2.7 exercise (3)).This integral dependence is at the root of the multiplicity calculation in former works such as \cite{MR1795550} and \cite{MR1819626}.

On the other hand, the graph deformation is much easier, and has certain advantages. For example, when we consider the map $f:M \to N$, where $N$ is not isomorphic to $\Cbb^n$, we can't form a family of embeddings $i_t: M \to M \times N$ as we did so far. Therefore the Lagrangian deformation is not immediately defined. However we can still deform the graph of  $f^*T^*N \to T^*M$. This is what we will do in \S\ref{complete intersections}.

Following the spirit of our achievement so far, we can conclude that in the case (3) of \S\ref{known}, the $\mu$ function can be computed by the following formula.
\begin{equation}\label{hypersurface-deform}
\mu(p) = (-1)^m\int c(\zeta^{\vee}) \cap s\Big(\pi^{-1}(p)\cap \Pbf(C \oplus f^*T^*N\vert_{Y}), \Pbf(C \oplus f^*T^*N\vert_{Y})\Big)
\end{equation}
where $\pi: \Pbf(T^*M \oplus f^*T^*N) \to M$ is the projection, $\zeta$ is the tautological quotient bundle of $\Pbf(T^*M \oplus f^*T^*N) \to M$, and $C, Y$ are explained below.

The image of the morphism $TM \otimes f^*T^*N \to \Osr_M$ is a coherent sheaf of ideal. We denote it by $\mathscr{J}$. Let $Y$ be the complex subspace determined by $\mathscr{J}$. Therefore we have a surjection of sheaves of algebras
\begin{equation*}
\textup{Sym}(TM) \to \textup{Rees}(\mathscr{J} \otimes f^*TN).
\end{equation*}
The ideal sheaf $\mathscr{J}$ in degree $0$ induces another surjection
\begin{equation*}
\textup{Sym}(TM)/\mathscr{J}\textup{Sym}(TM) \to \textup{Rees}(\mathscr{J} \otimes f^*TN)/\mathscr{J}\textup{Rees}(\mathscr{J} \otimes f^*TN)
\end{equation*}
which in turn defines the subcone $C$ of $\textup{Spec}(\textup{Sym}(TM\vert_Y))$. Note that $\textup{Spec}(\textup{Sym}(TM\vert_{Y}))$ is the total space of $T^*M\vert_{Y}$.

Note that formula \eqref{hypersurface-deform} does not provide us anything new, for after all, $Y$ is contained in several disconnected hypersurfaces and $f^*T^*N\vert_{Y}$ is trivial. However, the way we write down the formula will make it appear more consistent with further results in \S \ref{complete intersections}.

\subsection{}\label{principal parts}
Finally we deal with case (2) of \S\ref{known}. We wish to explain how to obtain the characteristic cycles in proposition \ref{global} from a global deformation. Our treatment of case (1) of \S\ref{known} attached two deformations to the section $f \in \Gamma(M,\mathscr{O}_M)$: the Lagrangian deformation and MacPherson's graph deformation. In the case (2) of \S\ref{known}, the Lagrangian deformation is quite obvious. Let $\mathscr{L}$ be the sheaf of sections of the line bundle $L$. The section $s \in \Gamma(M,\mathscr{L})$ allows us to view $M$ as a subspace of L, and we can deform $M$ inside $L$ by $ts$, where $t \in \Cbb$. The conormal spaces of this family of embeddings gives the family of conic Lagrangian cycles s in $T^*L$. However, it is not immediately clear what a substitute for the graph deformation should be. To find the correct geometric context for an analogue of the graph deformation, we first need to recollect some basic results for principal $G$-bundles when $G$ is a connected complex Lie group.

Let $P$ be a principal bundle over $M$ with group $G$. Then there exists a canonical exact sequence of vector bundles over $M$ (\cite{MR0086359} Theorem 1).
\begin{equation*}
0 \to L(P) \to Q \to TM \to 0,
\end{equation*}
where $L(P)$ is the bundle associated to $P$ by the adjoint representation of $G$, and $Q$ is the bundle of invariant vector fields on $P$. This short exact sequence determines an extension class $a(P) \in Ext^1(TM,L(P)) \cong H^1(M,T^*M \otimes L(P))$.

Let $E$ be a vector bundle over $M$. When $P$ is the frame bundle associated to $E$, it can be shown that $L(P) \cong \mathcal{End}(E)$ (\cite{MR0086359} proposition 9). In this case, the extension class $a(P)$ can be regarded as an element in $H^1(M,T^*M \otimes \mathcal{End}(E))$.

For a coherent sheaf $\mathscr{F}$ on $M$, there also exists an exact sequence (\cite{MR0086359} \S 4)
\begin{equation}
0 \to T^*M \otimes \mathscr{F} \to \mathcal{P}^1(\mathscr{F}) \to \mathscr{F} \to 0.
\end{equation}
Thus, given a vector bundle $E$, taking $\mathscr{F}$ to be the sheaf of sections of $E$ defines another extension class $b(E) \in Ext^1(E, T^*M \otimes E) \cong H^1(M,T^*M \otimes \mathcal{End}(E))$. 

These two extension classes for a vector bundle $E$ are related by $a(P) = -b(E)$ (\cite{MR0086359} Theorem 5).

If moreover $E = L$ is a line bundle, it is then clear that $\mathcal{End}(E) \cong \mathscr{O}_M$, and $P \cong L^{\times}$, the complement of the zero section in $L$. In this very special case, our first exact sequence takes the form
\begin{equation}\label{1stsq}
0 \to \mathscr{O}_M \to Q \to TM \to 0,
\end{equation}
and our second exact sequence, after taking the tensor product with $\mathscr{L}^{\vee}$ takes the form
\begin{equation}\label{2ndsq}
0 \to T^*M \to \mathcal{P}^1(\mathscr{L})\otimes \mathscr{L}^{\vee} \to \mathscr{O}_M \to 0.
\end{equation}
It is clear that the tensor product with a line bundle does not change the extension class of a short exact sequence. Therefore the extension class in $H^1(M, T^*M)$ determined by \eqref{2ndsq} is again $b(E)$. Using \cite{MR0086359} proposition 3, we see that

\begin{proposition}[implicitly stated in \cite{MR0086359}]
The short exact sequences \eqref{1stsq} and \eqref{2ndsq} are dual to each other. In particular, $\mathcal{P}^1(\mathscr{L})\otimes \mathscr{L}^{\vee}$ can be regarded as the bundle (over $M$) of invariant forms on the principal bundle $L^{\times}$.
\end{proposition}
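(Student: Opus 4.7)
The plan is to match the extension classes of \eqref{1stsq} and \eqref{2ndsq} (after dualising one of them) as elements of $H^1(M, T^*M)$. The class of \eqref{1stsq} is $a(L^\times) \in \mathrm{Ext}^1(TM, \mathscr{O}_M) = H^1(M, T^*M)$, while that of \eqref{2ndsq} is $b(L) \in \mathrm{Ext}^1(\mathscr{O}_M, T^*M) = H^1(M, T^*M)$, the tensor with $\mathscr{L}^\vee$ leaving the class undisturbed.

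Next I would dualise \eqref{1stsq} to produce the short exact sequence $0 \to T^*M \to Q^\vee \to \mathscr{O}_M \to 0$ and compute its extension class. Under the natural identification $\mathrm{Ext}^1(TM, \mathscr{O}_M) \cong \mathrm{Ext}^1(\mathscr{O}_M, T^*M)$ induced by taking duals of locally free sheaves, the Yoneda-product formalism says the class of the dualised sequence equals $-a(L^\times)$; this is essentially the content of Atiyah's Proposition 3 in \cite{MR0086359}. Combined with Atiyah's Theorem 5 relation $a(L^\times) = -b(L)$, this gives $-a(L^\times) = b(L)$, so the dual of \eqref{1stsq} and the sequence \eqref{2ndsq} represent the same extension class. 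Hence there is an isomorphism $Q^\vee \cong \mathcal{P}^1(\mathscr{L}) \otimes \mathscr{L}^\vee$ compatible with the morphisms to $\mathscr{O}_M$ and from $T^*M$; this is the duality assertion.

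For the \emph{in particular} clause, recall that $Q$ is by definition the bundle of $\mathbb{C}^\times$-invariant vector fields on the frame bundle $P = L^\times$ (the adjoint representation of the abelian group $\mathbb{C}^\times$ being trivial); passing to the dual, $Q^\vee$ is the bundle of invariant $1$-forms on $L^\times$. The isomorphism $Q^\vee \cong \mathcal{P}^1(\mathscr{L}) \otimes \mathscr{L}^\vee$ just established then furnishes the claimed interpretation of $\mathcal{P}^1(\mathscr{L}) \otimes \mathscr{L}^\vee$ as the bundle of invariant forms on $L^\times$.

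The only delicate point is sign bookkeeping: one must check that the sign picked up by the Yoneda dualisation of an extension is consistent with the sign appearing in Atiyah's identity $a(P) = -b(E)$. Both sign conventions are internal to Atiyah's paper, so the required compatibility can be read off directly from his Proposition 3 without any further recalculation, and no substantial obstacle is anticipated.
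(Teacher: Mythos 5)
Your proposal is correct and follows essentially the same route as the paper: both arguments combine Atiyah's Theorem 5 ($a(P)=-b(E)$) with his Proposition 3 on dualising extensions, note that twisting by $\mathscr{L}^{\vee}$ leaves the extension class unchanged, and then read off the identification of $Q^{\vee}$ (invariant forms on $L^{\times}$) with $\mathcal{P}^1(\mathscr{L})\otimes\mathscr{L}^{\vee}$. The sign bookkeeping you flag is handled at the same level of explicitness in the paper, which simply defers it to Atiyah's conventions (and later corroborates the isomorphism by an explicit chart-transition computation).
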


\begin{remark}
The definition of $\mathcal{P}^1(\mathscr{F})$ given in \cite{MR0086359} \S 4 is different from the commonly accepted one as in \cite{MR1308020}. Here we show their equivalence. 

When $X$ is a separated scheme over the ground field $k$, we let $\Delta_{(1)}$ be the first infinitesimal neighbourhood of the diagonal in $X \times X$ and $\pi_1,\pi_2$ be the restrictions of the two projections to $\Delta_{(1)}$. Let $\mathscr{F}$ be a coherent sheaf on $X$. The sheaf of the 1st order principal parts $\mathcal{P}^1(\mathscr{F})$ is defined by $\pi_{1*}\pi^*_2\mathscr{F}$ in \cite{MR1308020}. Very concretely, if we denote the ideal sheaf of the diagonal by $\mathscr{I}$, then 
\begin{equation*}
\mathcal{P}^1(\mathscr{F}) = \Big(\mathscr{O}_{X \times X} \slash \mathscr{I}^2\Big) \otimes \mathscr{F},
\end{equation*}
where the tensor product uses the right $\mathscr{O}_X$-module structure of $\mathscr{O}_{X \times X}$, and the $\mathscr{O}_X$-module structure of $\mathcal{P}^1(\mathscr{F})$ is inherited from the left $\mathscr{O}_X$-module structure of $\mathscr{O}_{X \times X}$.

We also have the right (left) $\mathscr{O}_X$-module inclusion $i_r$ ($i_l$): $\mathscr{O}_X \to \mathscr{O}_{X \times X}$, locally given by $i_r(f) = 1 \otimes f$ ($i_l(f) = f \otimes 1$), where $f$ is a local section of $\mathscr{O}_X$. The inclusion $i_r$ induces $i_r \otimes \mathscr{F}$: $\mathscr{F} \to \mathcal{P}^1(\mathscr{F})$, which we still denote by $i_r$ for simplicity. In \cite{MR1308020}, this morphism is denoted by $d^1$, and an interpretation of this morphism in terms of taking the truncated Taylor expansion is given there. One can see that $i_r$ is $k$-linear, but not $\mathscr{O}_X$-linear. It is this morphism that gives the $\mathbb{C}$-splitting of $\mathcal{P}^1(\mathscr{F})$ in the definition of $\mathcal{P}^1(\mathscr{F})$ given in \cite{MR0086359} \S 4. Moreover, let $s$ be a local section of $\mathscr{F}$, then we have 
\begin{align*}
fi_r(s) &= f \otimes 1 \otimes s \\
            & = (f \otimes 1 - 1 \otimes f) \otimes s + 1 \otimes 1 \otimes fs \\
            & = -df \otimes s + i_r(fs), 
\end{align*}
agreeing with the description of the $\mathscr{O}_X$-structure of $D(S)$ given in \cite{MR0086359} \S 4. (Setting $\beta = 0$ in the formula (ii) there. The negative sign in front of $df \otimes m$ is insignificant, and is due to the convention we use for defining $df$.)
\end{remark}

We will only consider $\mathscr{F} = \mathscr{L}$ the sheaf of sections of a line bundle $L$ until the end of this section. Let $\{ U_i\}$ be a cover of $M$ such that there exists local trivialisations $u_i: \mathscr{L} \vert_{U_i} \to \mathscr{O}_{U_i}$. Let $e_i = u^{-1}_i(1)$ be the local frames, and let $\tau_{ji} = u^{-1}_j u_i : \mathscr{L} \vert_{U_i \cap U_j} \to \mathscr{L} \vert_{U_i \cap U_j}$ be the chart transitions. Let $g_{ji} \in \Gamma (U_i \cap U_j, \mathscr{O}^{*}_M)$ such that $\tau_{ji} (e_i) =  e_j = g_{ji} e_i$.

We wish to work out explicitly the chart transition law for $\mathcal{P}^1(\mathscr{L})$. For this, we first notice that locally we can define $\mathscr{O}_{U_i}$-splitting morphism with the help of $i_l$. Namely, we define $i_{l,i}: \mathscr{L} \vert_{U_i} \to \mathcal{P}^1(\mathscr{L}) \vert_{U_i}$ by the composition
\begin{equation*}
\mathscr{L} \vert_{U_i} \xrightarrow{u_i} \mathscr{O}_{U_i} \xrightarrow{i_l} \mathscr{O}_{U_i \times U_i} \xrightarrow{1 \otimes u^{-1}_i} \mathscr{O}_{U_i \times U_i} \otimes \mathscr{L} \vert_{U_i}.
\end{equation*}
In other words, it is $fe_i \mapsto f\otimes 1 \otimes e_i$ for a local section $f \in \Gamma (U_i, \mathscr{O}_M)$. To summarise, we have a globally defined $\mathscr{O}_M$-linear inclusion $j: T^*M \otimes \mathscr{L} \to \mathcal{P}^1(\mathscr{L})$, a globally defined $\mathbb{C}$-linear inclusion $i_r: \mathscr{L} \to \mathcal{P}^1(\mathscr{L})$, and a locally defined $\mathscr{O}_M$-linear inclusion $i_{l,i}: \mathscr{L} \vert_{U_i} \to \mathcal{P}^1(\mathscr{L}) \vert_{U_i}$. The morphisms $j$ and $i_{l,i}$ give the local $\mathscr{O}_{U_i}$-splitting of $\mathcal{P}^1(\mathscr{L}) \vert_{U_i}$. The three morphisms are related by $i_{r} - i_{l,i} = ju^{-1}_idu_i$. One should also note that $\nabla_i = u^{-1}_idu_i: \mathscr{L} \vert_{U_i} \to \Omega^1_{U_i} \otimes \mathscr{L}$ are locally defined Koszul connections.

Let $\alpha \in \Gamma (U_i \cap U_j, \mathcal{P}^1(\mathscr{L}))$, and $\alpha = j(w) + i_{l,i}(s)$ be a $\mathscr{O}_{U_i}$-splitting of $\alpha$ in the chart $U_i$, where $w \in \Gamma (U_i \cap U_j,T^*M\otimes \mathscr{L})$ and $s \in \Gamma (U_i \cap U_j,\mathscr{L}$). Now
\begin{align*}
\nabla_i(s) - \nabla_j(s) &= u^{-1}_idu_i(s) - u^{-1}_{j}du_j(s) \\
                                      &= \tau^{-1}_{ji}u^{-1}_jdu_j\tau_{ji}(s) - u^{-1}_{j}du_j(s) \\
                                      &= g^{-1}_{ji}\nabla_j(g_{ji}(s)) - \nabla_j(s) \\
                                      & = \frac{dg_{ji}}{g_{ji}} \otimes s.
\end{align*}
Therefore
\begin{align*}
\alpha & = j(w) + i_{l,i}(s) \\
           & = j(w) + i_{r}(s) - j(\nabla_i(s)) \\
           & = j(w) + i_r(s) - j(\nabla_j(s)) - j(\frac{dg_{ji}}{g_{ji}} \otimes s) \\
           & = j(w - \frac{dg_{ji}}{g_{ji}}\otimes s) + i_{l,j}(s).
\end{align*}

In other words, the transition from chart $i$ to chart $j$: $\Big((T^*M\otimes \mathscr{L}) \oplus \mathscr{L} \Big)\vert_{U_i \cap U_j} \to \Big((T^*M\otimes \mathscr{L}) \oplus \mathscr{L} \Big)\vert_{U_i \cap U_j}$ is given by $(w,s) \mapsto (w- \frac{dg_{ji}}{g_{ji}}\otimes s, s)$.

\begin{remark}
If we interpret $\alpha, w, s$ as sections over $U_i \cap U_j$ of the bundles $\mathcal{P}^1(\mathscr{L}) \otimes \mathscr{L}^{\vee}, T^*M, \mathscr{O}_M$ respectively, then the formula above also gives the chart transition law for $\mathcal{P}^1(\mathscr{L}) \otimes \mathscr{L}^{\vee}$ in terms of the local splittings $\mathcal{P}^1(\mathscr{L}) \otimes \mathscr{L}^{\vee} \vert_{U_i} \cong \Omega^1_{U_i} \oplus \mathscr{O}_{U_i}$.
\end{remark}

Let us compute the transformation law for the invariant forms on $L^{\times}$. Clearly $L^{\times}$ also trivialises over $U_i$, equivalently we have $u_i : L^{\times} \vert_{U_i} \xrightarrow{\sim} U_i \times \Cbb^{*}$. Let $t_i$ be the coordinate function along the $\Cbb$ factor in this trivialisation. Let $\alpha_{ji}$ be the composition $u_ju^{-1}_i: (U_i \cap U_j) \times \Cbb^* \to (U_i \cap U_j) \times \Cbb^*$. One can see that $\alpha_{ji}^{*}t_j = g_{ij}t_i$. Therefore $\alpha_{ji}^*(dt_j) = d(g_{ij}t_i) = t_i(dg_{ij}) + g_{ij}(dt_i)$, or equivalently
\begin{equation*}
\frac{dt_i}{t_i} = -\frac{dg_{ij}}{g_{ij}} + \alpha_{ji}^{*}\frac{dt_j}{t_j} = \frac{dg_{ji}}{g_{ji}} + \alpha_{ji}^{*}\frac{dt_j}{t_j}.
\end{equation*}

Since the form $\frac{dt_i}{t_i}$ and $\Omega^1_{U_i}$ clearly generate the $\mathbb{C}^*$ invariant forms on $L^{\times} \vert_{U_i}$, our local computations show that the assignment $(w, \frac{dt_i}{t_i}) \mapsto (-w,1)$ where $w$ is a section of $\Omega^1_{U_i}$ glues together to give an explicit isomorphism $Q^{\vee} \xrightarrow{\sim} \mathcal{P}^1(\mathscr{L}) \otimes \mathscr{L}^{\vee}$. It is also clear from the local description that $Q^{\vee} \cong \Omega^1_L(\log M) \vert_M$, where $M$ is embedded as a smooth divisor in $L$ by the zero section of $L$.

\subsection{}\label{global deformation}
Given a section $s \in \Gamma(M,\mathscr{L})$, we need to consider $T^*L \vert_{s(M)}$ in order to speak about the conormal space of $s(M)$ in $L$. Let $\pi: L \to M$ be the canonical projection. There exists an exact sequence
\begin{equation*}
0 \to \pi^*\mathscr{L} \to TL \to \pi^*TM \to 0.
\end{equation*}
The inclusion $s(M) \subset L$ induces $0 \to Ts(M) \to TL \vert_{s(M)}$. The subbundles $\pi^*\mathscr{L} \vert_{s(M)}$ and $Ts(M)$ of $TL \vert_{s(M)}$ split $TL \vert_{s(M)}$. Dually, we have
\begin{align*}
T^*L \vert_{s(M)} &= T^*s(M) \oplus \pi^*\mathscr{L}^{\vee} \vert_{s(M)} \\
s^*T^*L &\cong T^*M \oplus \mathscr{L}^{\vee}.
\end{align*}

In contrast to the case (1) of \S \ref{known}, where the function $f$ defines a conormal vector $dt + df$ at each point of the graph $M_1$, here we can't get a conormal vector at each point of $s(M)$. In fact, letting the local equation of $s$ over $U_i$ be $f_i$ through the trivialisation $u_i: \Gamma(U_i,\mathscr{L}) \to \Gamma(U_i,\mathscr{O}_M)$, the local equation of $s(M)$ inside $L$ is given by $t_i-f_i = 0$. So the conormal vectors defined by this equation are given by $dt_i - df_i = 0$. The equalities $s = f_ie_i = f_ig_{ij}e_j = f_je_j$ implies that $f_ig_{ij} = f_j$, and we have
\begin{align*}
\alpha_{ji}^*(dt_j - df_j) & = d(g_{ij}t_i) - df_j \\
                                     & = \left(t_idg_{ij} +g_{ij}dt_i - f_idg_{ij} - g_{ij}df_i)\right\vert_{t_i = f_i} \\
                                     & = g_{ij}(dt_i - df_i).
\end{align*}
This means that the forms $\{(dt_i - df_i)\otimes e_i\}$ with values in the line bundle $L$ glue together, defining a non-vanishing global section of $(T^*L \vert_{s(M)}) \otimes \pi^*\mathscr{L}$ (rather than $T^*L \vert_{s(M)}$)! Though one does not have a conormal vector at each point of $s(M)$, the conormal direction at each point of $s(M)$ is still well defined (by the line generated by the non-vanishing form $dt_i-df_i$). 

\begin{remark}
One idea to find an analogue of the graph deformation is the following. We fix the section $s$ and the vector bundle $(T^*L \vert_{s(M)}) \otimes \pi^*\mathscr{L}$. It is tempting to consider the locally defined forms $\{(dt_i - tdf_i) \otimes e_i\}$ for an arbitrary parameter $t \in \mathbb{C}$, and parallel to our construction in \S \ref{deformation}, stipulate that they are the direction vectors of the deformed graph $\Gamma_t$, if there were any. But one will quickly see that this idea fails because the forms constructed in this way don't glue unless $t=1$.

Another idea goes as follows. This time, we view the forms $\{(dt_i - tdf_i) \otimes e_i\}$ as defining a global section of $(T^*L \vert_{ts(M)}) \otimes \pi^*\mathscr{L}$  for an arbitrary $t \in \Cbb$. Since $(ts)^*(T^*L \otimes \pi^*\mathscr{L}) \cong (T^*M \oplus \mathscr{L}^{\vee})\otimes \mathscr{L}$, the global section defined by $\{(dt_i - tdf_i) \otimes e_i\}$ is pulled back to a global section of $(T^*M \oplus \mathscr{L}^{\vee})\otimes \mathscr{L}$. Therefore, it is tempting to compare the global sections of $(T^*M \oplus \mathscr{L}^{\vee})\otimes \mathscr{L}$ thus defined for various $t$. However, this idea still won't help us because one can quickly check that the global sections induced from different sections $ts$ are all identical. In fact, they are equal to $1 \in \Gamma(M,\mathscr{O}_M) \subset \Gamma(M, (T^*M \oplus \mathscr{L}^{\vee})\otimes \mathscr{L})$.
\end{remark}

Because $f_ie_i$ is the local expression of $s$, we see that $\{\frac{dt_i}{f_i} - t\frac{df_i}{f_i}\}$ glue to a global meromorphic section of $T^*L \vert_{ts(M)}$. Because the value of the invariant form $t\frac{dt_i}{t_i}$ is $\frac{dt_i}{f_i}$ when $t_i = tf_i$, we see that the induced meromorphic global invariant form is $\{t\frac{dt_i}{t_i} - t\frac{df_i}{f_i}\}$. Recall that $X$ is the complex subspace of $M$ defined by the zeroes of the section $s$. So we have $\{t\frac{dt_i}{t_i} - t\frac{df_i}{f_i}\} \in \Gamma(M,Q^{\vee}(X))$. We have constructed the explicit isomorphism $Q^{\vee} \xrightarrow{\sim} \mathcal{P}^1(\mathscr{L})\otimes \mathscr{L}^{\vee}$, sending $(\frac{dt_i}{t_i},w)$ to $(1,-w)$, using the local splittings $\mathcal{P}^1(\mathscr{L})\otimes \mathscr{L}^{\vee}\vert_{U_i} \cong \mathscr{O}_{U_i} \oplus \Omega^1_{U_i}$. The forms $\{t\frac{dt_i}{t_i} - t\frac{df_i}{f_i}\}$ are sent to $\{t + t\frac{df_i}{f_i}\}$. Finally, we can use $s=\{f_ie_i\} \in \Gamma(M, \mathscr{L})$ to untwist $\mathscr{L}^{\vee}$ and clear denominators. We get $\{tf_i + tdf_i\} \in \Gamma(M, \mathcal{P}^1(\mathscr{L}))$. Note that the final result is $i_r(ts)$, the truncated Taylor expansion of $ts$. 

For any $s \in \Gamma(M,\mathscr{L})$, the procedure we have described actually defines a morphism
\begin{equation*}
T^*L\vert_{s(M)} \otimes \pi^*\mathscr{L} \to \mathcal{P}^1(\mathscr{L}).
\end{equation*}
The induced map on global sections takes $\{(dt_i - df_i) \otimes e_i\}$ to $i_r(s)$. With the help of this morphism, we can ``convert" the Lagrangian specialisation to a graph deformation. Indeed, the conormal space for $ts$ is brought to the section $ti_r(s) \in \Gamma(M,\mathcal{P}^1(\mathscr{L}))$, and the Lagrangian limit is converted to the limit of the sections $ti_r(s)$ when $t \to \infty$. The latter limit clearly can be understood by the graph construction. Just apply the standard procedure of the graph construction to the morphism $\mathscr{O}_M \to \mathcal{P}^1(\mathscr{L})$ determined by $i_r(s)$. The limit cycle is given by 
\begin{equation*}
[\textup{Bl}_{Y'}M] + [\mathbb{P}(C_{Y'}M \oplus 1)],
\end{equation*}
where $Y$ is the zero of the section $i_r(s)$. In $U_i$, $Y$ is defined by $f_i$ and all partial derivatives of $f_i$. To relate $[\textup{Bl}_{Y'}M]$ and $[\mathbb{P}(C_{Y'}M \oplus 1)]$ to the constructible functions $\chi$ and $\mu$, we run almost the same arguments as in \S \ref{microlocal}, and we omit them entirely.

\section{complete intersections}\label{complete intersections}

The moral we have acquired in the previous section is that, there are two parts in the limit cycle of the graph construction; the one which dominates $M$ gives the Euler characteristic of the Milnor fiber up to a sign, and the one which is mapped into the critical space $C(f)$ gives the Milnor number up to a sign. We will discuss this statement precisely. 

\subsection{}\label{basicsetting}
Let us start with a holomorphic map $f: M \to N$ between two complex manifolds. Because we are chiefly interested in complete intersection singularities, we assume $f$ is flat, though the deformation construction which will be carried out next does not require this assumption. Flatness implies that $f$ is open, therefore Sard's theorem implies that the critical locus $|C(f)|$ (the discriminant locus $|D(f)|$) is nowhere dense in $M$ ($N$). We still consider the family of graphs $\Gamma_t \subset T^*M \oplus f^*T^*N$ ($t \neq \infty$). Unlike the $\dim N =1$ case, there are two possible versions of the graph construction for us to choose. Namely, we can form $G = \textup{Grass}_n(T^*M \oplus f^*T^*N)$ and get a family $M \times \Cbb \cong \mathscr{M} \subset G \times \Cbb$ of embeddings of $M$ into $G$ by the formula
\begin{align*}
M \times \Cbb &\to G \times \Cbb \\
(z,t) &\mapsto (\Gamma_t(z), t),
\end{align*}
or we can form $\Pbf(T^*M \oplus f^*T^*N)$ and consider $\Pbf(\Gamma_t)$. If $N = \Cbb^n$ and we consider the embedding $i_t: M \to M \times \Cbb^n$ given by $z \mapsto (z,-tf(z))$, then $\Pbf(\Gamma_t)$ is isomorphic to the projectivised conormal space of $i_t(M)$ in $M \times \Cbb^n$, as we explained in \S \ref{deformation}. We will loosely call the first choice relative Nash construction and the second choice relative conormal construction. We fix the following notations in the rest of the paper:
\begin{itemize}
\item The tautological subbundle of $G$ is denoted by $S$. Note that $S\vert_{\mathscr{M}_t} = \Gamma_t$. Let $\pr_1: G \times \Pbf^1 \to G$ is the first projection. So with our notations, $\pr^*_1S$ is a bundle over $G \times \Cbb$ and $\Gamma$ is a bundle over $M \times \Cbb$, and we have $\pr^*_1S\vert_{\mathscr{M}} \cong \Gamma$ under the isomorphism $\mathscr{M} \cong M \times \Cbb$. The tautological quotient bundle of $G$ is denoted by $Q$, and let $Q_t = \pr^*_1Q\vert_{\mathscr{M}_t}, S_t = \pr^*_1S\vert_{\mathscr{M}_t}$.
\item We still denote the tautological quotient bundle of $\Pbf(T^*M \oplus f^*T^*N)$ by $\zeta$ and view $\zeta$ as a family of bundles $\zeta_t$ on $\Pbf(\Gamma_t)$. Note that $\zeta_t$ is {\em{not}} the tautological quotient bundle of $\Pbf(\Gamma_t)$.
\item Let $\pi: \Pbf(T^*M \oplus f^*T^*N) \to M$, $\pi_M: \Pbf(T^*M) \to M$ and $p: G \to M$ be the projections.
\item Let $j: \{\infty\} \to \Pbf^1$ be the inclusion.
\end{itemize}

We have the following basic relations:
\begin{lemma}
\begin{equation*}
c(Q^{\vee}_t) \cap [\mathscr{M}_t] = c(TM) \cap [M] = (-1)^{n-1} \pi_*\Big(c(\zeta_t^{\vee}) \cap [\Pbf(\Gamma_t)]\Big) 
\end{equation*}
in $H_*(M)$.
\end{lemma}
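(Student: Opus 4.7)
The first equality follows from a direct analysis of the two bundles involved. By construction, the fibre of $\Gamma_t \subset T^*M \oplus f^*T^*N$ at $z \in M$ is $\{(v, t\cdot df_z(v)) : v \in f^{*}_zT^*N\}$; thus the second projection identifies $\Gamma_t$ with $f^*T^*N$, and the first projection induces an isomorphism $Q_t \cong T^*M$ of the quotient. Under $\mathscr{M}_t \cong M$, this immediately gives $c(Q_t^{\vee}) \cap [\mathscr{M}_t] = c(TM) \cap [M]$.

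For the second equality, let $\xi'$ denote the restriction of $\xi$ to $\Pbf(\Gamma_t)$; it coincides with the tautological subbundle of the projective bundle $\pi\colon \Pbf(\Gamma_t) \to M$. Let $\mathcal{Q}_t$ be the tautological quotient of $\Pbf(\Gamma_t) \to M$. On $\Pbf(\Gamma_t)$ we then have two tautological sequences,
\begin{equation*}
0 \to \xi' \to \pi^*\Gamma_t \to \mathcal{Q}_t \to 0, \qquad 0 \to \xi' \to \pi^*(T^*M \oplus f^*T^*N) \to \zeta_t \to 0,
\end{equation*}
whose comparison, together with $Q_t \cong T^*M$, yields the short exact sequence $0 \to \mathcal{Q}_t \to \zeta_t \to \pi^*T^*M \to 0$. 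Taking duals gives $c(\zeta_t^{\vee}) = c(\mathcal{Q}_t^{\vee})\, c(\pi^*TM)$, and the projection formula then reduces the claim to showing
\begin{equation*}
\pi_*\bigl(c(\mathcal{Q}_t^{\vee}) \cap [\Pbf(\Gamma_t)]\bigr) = (-1)^{n-1}[M].
\end{equation*}

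For this, set $h = c_1(\xi'^{\vee})$. Dualizing the first sequence gives $c(\mathcal{Q}_t^{\vee}) = c(\pi^*\Gamma_t^{\vee})/(1+h)$; expanding $(1+h)^{-1} = \sum_{k\geq 0}(-h)^k$, applying the standard formula $\pi_*(h^k \cap [\Pbf(\Gamma_t)]) = s_{k-n+1}(\Gamma_t) \cap [M]$ from \cite{MR732620} \S 3.1, and using $s_i(\Gamma_t^{\vee}) = (-1)^i s_i(\Gamma_t)$, the projection formula produces
\begin{equation*}
\pi_*\bigl(c(\mathcal{Q}_t^{\vee}) \cap [\Pbf(\Gamma_t)]\bigr) = (-1)^{n-1}\, c(\Gamma_t^{\vee}) \cdot s(\Gamma_t^{\vee}) \cap [M] = (-1)^{n-1}[M],
\end{equation*}
since $c(\Gamma_t^{\vee})\cdot s(\Gamma_t^{\vee}) = 1$. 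Multiplying through by $(-1)^{n-1}$ finishes the second equality.

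There is no genuine obstacle in this argument; the only point requiring care is to distinguish the ambient quotient $\zeta_t$ (of rank $m+n-1$) from the tautological quotient $\mathcal{Q}_t$ of the smaller projective bundle $\Pbf(\Gamma_t) \to M$ (of rank $n-1$), and to recognize that the gap between them is precisely $\pi^*T^*M$—which is exactly what produces the factor $c(TM)$ on the right-hand side.
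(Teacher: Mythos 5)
Your proposal is correct and follows essentially the same route as the paper: the first equality via the Whitney sum formula using $\Gamma_t \cong f^*T^*N$ (equivalently $Q_t \cong T^*M$), and the second via the standard Segre-class identity $\pi_*(c_1(\xi'^{\vee})^{n-1+i}\cap[\Pbf(\Gamma_t)]) = s_i(\Gamma_t)\cap[M]$ together with $c\cdot s = 1$ — the paper merely outsources this computation to Lemma 1 of \cite{MR1063344} for $N=\Cbb^n$ and declares the general case an "easy manipulation," whereas you carry it out explicitly. The only blemish is the harmless swap of "first" and "second" projection in your opening paragraph relative to the parametrisation $(v, t\,df_z(v))$ you chose.
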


\begin{proof}
The first equation follows from the Whitney sum formula and the fact that $S_t \cong \Gamma_t \cong f^*T^*N$ as vector bundles on M.

In the case that $N = \Cbb^n$, we have $T^*M \oplus f^*T^*N \cong \Omega^1_{M \times \Cbb^n}\vert_{M_t}$, and the second equation follows immediately from Lemma 1 of \cite{MR1063344}. (Setting m = m+n, d= m and $\textup{taut} = \zeta^{\vee}$ loc. cit.) In the general case, it follows from easy manipulation of properties of Chern classes of vector bundles. We omit the details.

\end{proof}

Similarly, we have the corresponding statements for constructible functions.
\begin{lemma}\label{nash-conor}
\begin{equation*}
1 = 1_{M}(z) = \int c(Q^{\vee}_t) \cap s(p^{-1}(z)\cap \mathscr{M}_t,\mathscr{M}_t) = \int (-1)^{n-1} c(\zeta_t^{\vee}) \cap s\Big(\pi^{-1}(z) \cap \Pbf(\Gamma_t),\Pbf(\Gamma_t)\Big) 
\end{equation*}
for any $z \in M$.
\end{lemma}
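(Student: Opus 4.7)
The plan is to exploit the fact that both $p: \mathscr{M}_t \to M$ and $\pi|_{\Pbf(\Gamma_t)}: \Pbf(\Gamma_t) \to M$ are smooth fibrations, so that the relevant intersections with fibers admit transparent Segre-class descriptions.

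For the Grassmannian equality, $p$ restricts to an isomorphism $\mathscr{M}_t \xrightarrow{\sim} M$ (by the very definition of $\mathscr{M}$ via the section $z \mapsto (\Gamma_t(z), t)$). Hence $p^{-1}(z) \cap \mathscr{M}_t$ is a reduced point, its Segre class in $\mathscr{M}_t$ is the point class, and the integral picks out $c_0(Q_t^{\vee}) = 1$.

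For the conormal equality, $\pi|_{\Pbf(\Gamma_t)}$ is a smooth $\Pbf^{n-1}$-bundle, so $F := \pi^{-1}(z) \cap \Pbf(\Gamma_t) = \Pbf(\Gamma_t(z)) \cong \Pbf^{n-1}$ is smoothly embedded with normal bundle isomorphic to the pullback of $T_zM$, hence trivial. Therefore $s(F, \Pbf(\Gamma_t)) = [F]$, and the problem reduces to computing $(-1)^{n-1}\int_F c(\zeta^{\vee}|_F)$. Restricting the dual of the defining sequence $0 \to \xi \to \pi^*(T^*M \oplus f^*T^*N) \to \zeta \to 0$ to $F$, the middle term becomes trivial (it is the fiber of $T^*M \oplus f^*T^*N$ at $z$), and $\xi|_F = \mathscr{O}_F(-1)$ since $F$ sits linearly in $\Pbf\bigl((T^*M \oplus f^*T^*N)_z\bigr)$. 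This yields
\begin{equation*}
c(\zeta^{\vee}|_F) = \frac{1}{1+h} = 1 - h + h^2 - \cdots + (-1)^{n-1} h^{n-1},
\end{equation*}
where $h$ is the hyperplane class on $F$; integration over $\Pbf^{n-1}$ selects the coefficient $(-1)^{n-1}$, which is cancelled by the prefactor to give $1$.

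I do not anticipate any real obstacle. The only items requiring care are the identification of the Segre class (needing the normal bundle of a fiber of a smooth fibration to be trivial over a single point) and the identification $\xi|_F = \mathscr{O}_F(-1)$, together with the standard sign bookkeeping for the dual tautological bundle.
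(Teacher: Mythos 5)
Your proposal is correct, and it supplies an argument the paper does not give: the proof of Lemma \ref{nash-conor} is literally ``Omitted.'' Both of your computations check out. For the Grassmannian side, $p\vert_{\mathscr{M}_t}$ is an isomorphism onto $M$, so $p^{-1}(z)\cap\mathscr{M}_t$ is a reduced point, its Segre class is the point class, and only $c_0(Q_t^{\vee})=1$ survives. For the conormal side, the fibre $F=\Pbf(\Gamma_t(z))\cong\Pbf^{n-1}$ of the smooth $\Pbf^{n-1}$-bundle $\Pbf(\Gamma_t)\to M$ has trivial normal bundle $\pi^*T_zM\vert_F$, so $s(F,\Pbf(\Gamma_t))=[F]$; restricting $0\to\xi\to\pi^*(T^*M\oplus f^*T^*N)\to\zeta\to 0$ to $F$ gives $c(\zeta_t^{\vee}\vert_F)=(1+h)^{-1}$ (the middle term trivialises and $\xi\vert_F=\mathscr{O}_F(-1)$ because $F$ is a linear $\Pbf^{n-1}$ in $\Pbf\bigl((T^*M\oplus f^*T^*N)_z\bigr)$), whose degree-$(n-1)$ part integrates to $(-1)^{n-1}$, cancelling the prefactor. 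This is exactly the constructible-function analogue of how the paper proves the preceding homology-level lemma (Whitney sum formula plus $\Gamma_t\cong f^*T^*N$), and it is the natural localisation of that argument at $z$; it also shows the identity holds uniformly in $t$ without invoking the deformation to $t=0$ (where $\zeta_0\cong T^*M$ and the statement reduces to $\int c(TM)\cap s(z,M)=1$, the special case the paper does spell out in \S\ref{microlocal}). No gaps.
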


\begin{proof}
Omitted.
\end{proof}

\subsection{}\label{nash}
As is explained in \cite{MR732620} \S 18.1, we can group the limiting cycle $[\mathscr{M}_\infty]$ into two parts. One of them dominates $M$ and the other is mapped into $C(f)$, the locus of points where $0 \to f^*T^*N \to T^*M$ fails to be left exact. We observe that
\begin{proposition}
The component of $\mathscr{M}_\infty$ which dominates $M$ is $\tilde{M} = \textup{Bl}_{C(f)}M$, and $\tilde{M}$ can be identified with the Nash modification of $M$ relative to $f$. 
\end{proposition}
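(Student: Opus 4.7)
The plan is to identify $\tilde{M}$ as the closure of a natural section over the smooth locus $M \setminus C(f)$, then match this closure both with the relative Nash modification (essentially by definition) and with $\textup{Bl}_{C(f)}M$ (by a universal property argument). For the first step, at any $z \in M \setminus C(f)$ the differential $df_z$ is injective, so an elementary rescaling of the defining equation of $\Gamma_t(z)$ shows that the limit $\Gamma_\infty(z)$ exists and equals $\textup{Image}(df_z) \subset T^*M|_z$, viewed as an $n$-dimensional subspace inside the ambient fibre of $T^*M \oplus f^*T^*N$. Thus $z \mapsto \Gamma_\infty(z)$ extends to a regular section of $G \to M$ on $M \setminus C(f)$, whose closure in $G$ is, by the general mechanism of MacPherson's graph construction (\cite{MR732620} \S 18.1), the unique component of $[\mathscr{M}_\infty]$ that dominates $M$. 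By the standard definition of the relative Nash modification --- the closure in the Grassmannian bundle $G \to M$ of the section ``take image of $df$'' --- this closure is precisely the Nash modification of $M$ relative to $f$.

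It remains to identify this $\tilde{M}$ with $\textup{Bl}_{C(f)}M$. I would proceed via the universal property of the blow-up. Scheme-theoretically, $C(f)$ is cut out by the ideal of $n \times n$ minors of the Jacobian of $f$, equivalently the $n$-th Fitting ideal of $\textup{coker}(df)$; this ideal is realised globally as the image of the section $\wedge^n df \in \Gamma(M,\, \wedge^n T^*M \otimes (\wedge^n f^*T^*N)^{\vee})$. On $\tilde{M}$, the restriction $\tilde{S}$ of the tautological rank-$n$ subbundle on $G$ extends the image of $df$ across $C(f)$, and the line $\wedge^n \tilde{S}$ gives a local generator through which $\wedge^n df$ factors, making the pullback of the Fitting ideal invertible; by universality this yields a canonical morphism $\tilde{M} \to \textup{Bl}_{C(f)}M$ over $M$. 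Conversely, on $\textup{Bl}_{C(f)}M$ the Fitting ideal becomes principal, so after dividing out the local generator the ``normalised'' minors never vanish simultaneously and they define a regular section into $G$ extending $z \mapsto \Gamma_\infty(z)$; this gives a morphism $\textup{Bl}_{C(f)}M \to \tilde{M}$ over $M$. The two morphisms are mutual inverses over $M \setminus C(f)$, hence everywhere by reducedness of both spaces.

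The main obstacle I expect is the converse step --- namely, the concrete verification that on $\textup{Bl}_{C(f)}M$ the normalised minors nowhere vanish simultaneously, equivalently that the rational map $z \mapsto \textup{Image}(df_z)$ from $M$ to the Grassmannian bundle extends to a regular morphism on $\textup{Bl}_{C(f)}M$. This is a classical fact in the theory of resolution of rational maps to Grassmannians by blow-ups of Fitting ideals, but it requires an explicit local coordinate computation on the affine charts of the blow-up, carefully tracking how the common factors of the $n \times n$ minors of $df$ can be simultaneously extracted.
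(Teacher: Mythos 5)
Your argument is correct and follows the same basic strategy as the paper: identify the dominating component of $\mathscr{M}_\infty$ as the closure of the section $z \mapsto \textup{Im}(df_z)$ over $M \setminus C(f)$ by computing the pointwise limit of $\Gamma_t(z)$ (which is the definition of the relative Nash modification), and then match that closure with $\textup{Bl}_{C(f)}M$ via the $n\times n$ minors of the Jacobian. Where you diverge is in the packaging of the second step. You run the universal property of the blow-up in both directions and then check the two maps are mutually inverse; the paper instead builds everything in one stroke by exhibiting the surjection of graded algebras $\textup{Sym}(\Lambda^n TM) \to \textup{Rees}(\mathscr{J} \otimes \Lambda^n TN)$, where $\mathscr{J}$ is the image of $\Lambda^n TM \otimes f^*\Lambda^n T^*N \to \Osr_M$ (the ideal of $C(f)$). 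Taking $\textup{Proj}$ gives a closed embedding $\textup{Bl}_{C(f)}M \hookrightarrow \Pbf(\Lambda^n T^*M)$ that restricts over $M\setminus C(f)$ to $z \mapsto [\wedge^n df_z]$; since this lands in the Pl\"ucker-embedded $\textup{Grass}_n(T^*M)$ on a dense open set and the Grassmannian is closed, the blow-up is exactly $\overline{M \setminus C(f)}$ inside $\textup{Grass}_n(T^*M)$. This dissolves what you flag as your ``main obstacle'': the statement that the normalised minors generate the unit ideal on the blow-up, equivalently that the rational map to $\Pbf(\Lambda^n T^*M)$ becomes regular there, is automatic from the fact that $\textup{Bl}_{\mathscr{J}}M = \textup{Proj}(\textup{Rees}(\mathscr{J}))$ is by construction the closure of the graph of the rational map defined by the generators of $\mathscr{J}$ --- no chart-by-chart extraction of common factors is needed, and no separate inverse morphism has to be produced. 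Two minor points: the relevant Fitting ideal of $\textup{coker}(df)$ is $\textup{Fitt}_{m-n}$ (generated by the $n\times n$ minors), not the $n$-th one as written; and one should record, as the paper does, that flatness makes $C(f)$ nowhere dense, so that $M\setminus C(f)$ is indeed dense and the closure statements are nonvacuous.
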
 

For the fluency of the flow of the discussion, we choose to formally define $\mathscr{M}_\infty$ and  $[\mathscr{M}_\infty]$ in \S \ref{conor}. The reader can rely on the intuition that $\mathscr{M}_\infty$ is the deformation limit of $\mathscr{M}_t$ for the moment. Recall also the construction of the relative Nash modification. In our context, because $f$ is a submersion at every point $x \in M \setminus C(f)$, we have a section of $\textup{Grass}_{m-n}(TM)$ over $M \setminus C(f)$, sending each $z \in M \setminus C(f)$ to $T_zM_{f(z)}$. The Nash modification relative to $f$ is the closure of the image of $M \setminus C(f)$ in $\textup{Grass}_{m-n}(TM)$.

\begin{proof}
The morphism $df:f^*T^*N \to T^*M$ induces $\Lambda^n(df):f^*\Lambda^n T^*N \to \Lambda^n T^*M$. Twisting the dual of $\Lambda^n(df)$ by $f^* \Lambda^n T^*N$, we get $ \Lambda^n TM \otimes f^* \Lambda^n T^*N \to \Osr_{M}$. Denote by $\mathscr{J}$ the image of this morphism, which is also the ideal sheaf of $C(f)$. Therefore we have a surjective morphism $\Lambda^n TM \to \mathscr{J} \otimes \Lambda^n TN$, which allows us to construct the surjective morphism of algebras:
\begin{equation*}
\textup{Sym}(\Lambda^n TM) \to \textup{Rees}(\mathscr{J} \otimes \Lambda^n TN). 
\end{equation*}

The statements of the proposition then follows from the observations below. 
\begin{itemize}
\item $\textup{Proj}\Big(\textup{Sym}(\Lambda^n TM)\Big) \cong \Pbf(\Lambda^n T^*M)$.
\item $\textup{Proj}\Big(\textup{Rees}(\mathscr{J} \otimes \Lambda^n TN)\Big) \cong \textup{Proj}\Big(\textup{Rees}(\mathscr{J})\Big)$. Both spaces are isomorphic to $\textup{Bl}_{C(f)}M$. The difference between these two $\textup{Proj}\Big(\textup{Rees}(\ldots)\Big)$ is that, they have difference tautological line bundles $\Osr(1)$. We have $\Osr_1(1) \cong \phi^*\Osr_2(1) \otimes \Lambda^n TN$ where $\phi$ is the isomorphism from the first Proj to the second one, and the subscripts $1,2$ indicates the first or second Proj.
\item The closed embedding $\textup{Bl}_{C(f)}M \to \Pbf(\Omega^n_M)$ factors through the Pl{\"{u}}cker embedding $\textup{Grass}_n(T^*M) \to  \Pbf(\Lambda^n T^*M)$. Locally on $M$ and $N$, when we can expression $f$ by coordinates $f=(f_1,\ldots,f_n)$, the map $(M \setminus C(f)) \to \Pbf(\Lambda^n T^*M)$ is given by $z \mapsto \Cbb df_1(z)\wedge\ldots\wedge df_n(z)$. Therefore we have the inclusion $M \setminus C(f) \subset \textup{Grass}_n(T^*M)$. Since $C(f)$ is nowhere dense in $M$ by our flatness assumption, $\textup{Bl}_{C(f)}M$ contains a dense open subset isomorphic to $M\setminus C(f)$. This shows that $\textup{Bl}_{C(f)}M = \overline{M\setminus C(f)} \subset \textup{Grass}_n(T^*M)$. 
\item Fixing a point $z \in (M \setminus C(f))$. The deformed graph $\Gamma_t$ at $z$ corresponds to the point $\Cbb(dw_1 + tdf_1(z))\wedge \ldots \wedge (dw_n + tdf_n(z))$ on $G \subset \Pbf\Big(\Lambda^n(T^*M\oplus f^*T^*N)\Big)$. Here $w_1,\ldots,w_n$ are local coordinates on $N$. It is clear that when $t \to \infty$, the limit point is $\Cbb df_1(z)\wedge\ldots\wedge df_n(z)$. This shows that $M\setminus C(f)$ appears in the deformation limit, hence also its closure $\textup{Bl}_{C(f)}M$.
\item Under the isomorphism $\textup{Grass}_n(T^*M) \cong \textup{Grass}_{m-n}(TM)$, the subspace represented by $df_1(z)\wedge \ldots \wedge df_n(z)$ is sent to $T_zM_{f(z)}$ for $z\in M\setminus C(f)$.
\end{itemize}

\end{proof}

Having identified $\tilde{M}$ as an analytic subspace of $\mathscr{M}_\infty$, we can now let $\widetilde{C(f)} = \overline{\mathscr{M}_\infty \setminus \tilde{M}}$, and we have 
\begin{equation}\label{deflimit}
[\mathscr{M}_\infty] = [\tilde{M}] + [\widetilde{C(f)}].
\end{equation}

Picking $z \in C(f)$, and using the deformation argument, we have
\begin{align*}
1 &=  \int c(Q^{\vee}_0) \cap s(p^{-1}(z)\cap \mathscr{M}_0, \mathscr{M}_0) \\
   &=  \int c(Q^{\vee}_\infty) \cap s(p^{-1}(z)\cap \mathscr{M}_{\infty}, \mathscr{M}_{\infty}) \\
   &= \int c(Q^{\vee}_\infty) \cap s(p^{-1}(z) \cap \tilde{M}, \tilde{M}) + \int c(Q^{\vee}_\infty) \cap s(p^{-1}(z) \cap \widetilde{C(f)}, \widetilde{C(f)}) \\
   &= \int c(p^*_{\tilde{M}}f^*TN)c(Q^{\vee}_M)\cap s(p^{-1}_{\tilde{M}}(z), \tilde{M}) + \int c(Q^{\vee}_\infty) \cap s(p^{-1}(z) \cap \widetilde{C(f)}, \widetilde{C(f)}),
\end{align*}
where $Q_M$ denotes the tautological quotient bundle of $\textup{Grass}_n(T^*M)$ and $p_{\tilde{M}}$ denotes the restriction $p\vert_{\tilde{M}}$. The last equation uses the fact the $\tilde{M}$ is contained entirely in $\textup{Grass}_n(T^*M)$ so that $(Q_{\infty})\vert_{\tilde{M}} \cong Q_M\vert_{\tilde{M}} \oplus p^*_{\tilde{M}}f^*\Omega_N$. Note that $Q_M^{\vee}\vert_{\tilde{M}}$ can be identified with the rank $m-n$ relative Nash tangent bundle $\tilde{T}_{M/N}$, and that $p^*_{\tilde{M}}f^*TN$ is trivial over $p^{-1}_{\tilde{M}}(z)$.

Compare this result with the equation which defines the Milnor number
\begin{equation*}
1 = \chi(z) + (-1)^{m-n+1}\mu(z),
\end{equation*}
we claim that
\begin{theorem}\label{nash-integral}
Let $f: M \to N$ be a holomorphic map between two complex manifolds and let $z \in C(f)$. Assume that in a neighbourhood of $z$, $f$ is flat and sans \'{e}clatement en codimension $0$, then we have the following formulas for the Euler characteristic of the Milnor fibre at $z$ and the Milnor number at $z$: 
\begin{align*}
\chi(z) &= \int c(\tilde{T}_{M/N})\cap s(p^{-1}_{\tilde{M}}(z), \tilde{M}), \\           
\mu(z) & = (-1)^{m-n+1} \int c(Q^{\vee}_\infty) \cap s(p^{-1}(z) \cap \widetilde{C(f)}, \widetilde{C(f)}).    
\end{align*}
If moreover $\tilde{M}$ is Cohen-Macauley, or $n=1$, we also have
\begin{equation*}
\chi(z) = \int c(\tilde{T}_{M/N})\cap s\Big(p^{-1}_{\tilde{M}}(z), p^{-1}_{\tilde{M}}M_{f(z)}\Big).
\end{equation*}
\end{theorem}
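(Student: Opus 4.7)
The plan is to deduce the third equality directly from the first (already established) by proving the Segre class identity
\begin{equation*}
s\bigl(p_{\tilde{M}}^{-1}(z),\, p_{\tilde{M}}^{-1}M_{f(z)}\bigr) \;=\; s\bigl(p_{\tilde{M}}^{-1}(z),\, \tilde{M}\bigr)
\end{equation*}
under either of the two supplementary hypotheses. Once this holds, capping with $c(\tilde{T}_{M/N})$ and integrating transports the first formula of the theorem into the third without further work.

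The general principle I invoke is that for a regular embedding $X \hookrightarrow Y$ with normal bundle $N_{X/Y}$ and any closed subscheme $V \subset X$, one has
\begin{equation*}
s(V,X) \;=\; c\bigl(N_{X/Y}\vert_V\bigr) \cap s(V,Y),
\end{equation*}
a standard consequence of deformation to the normal cone (cf.\ \cite{MR732620} Chapter 4). Setting $V = p_{\tilde{M}}^{-1}(z)$, $X = p_{\tilde{M}}^{-1}M_{f(z)}$, and $Y = \tilde{M}$, I must verify two things: (i) $X$ is regularly embedded in $Y$ of codimension $n$, and (ii) $N_{X/Y}\vert_V$ has trivial total Chern class. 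Claim (ii) is essentially free: the equations defining $X$ inside $Y$ are pullbacks via $p_{\tilde{M}}$ of $n$ local coordinate functions on $N$ vanishing at $f(z)$, so $N_{X/Y} \cong (f \circ p_{\tilde{M}})^{*}TN\vert_X$, which is pulled back from the single point $f(z)$ and hence topologically trivial, giving $c(N_{X/Y}\vert_V) = 1$. Claim (i) is where the two hypotheses split. When $n = 1$, the single function $f \circ p_{\tilde{M}}$ is a non-zero divisor on each component of $\tilde{M}$ (since $f \not\equiv 0$ and $p_{\tilde{M}}$ is birational), hence cuts out a Cartier divisor, automatically a regular embedding. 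When $\tilde{M}$ is Cohen-Macaulay, I first argue that $X$ has codimension exactly $n$: flatness of $f$ gives $\dim M_{f(z)} = m-n$, the hypothesis \emph{sans \'{e}clatement en codimension 0} prevents exceptional fibres of $p_{\tilde{M}}$ from inflating $\dim X$ beyond $m-n$, and Krull's height theorem supplies the complementary lower bound; then in a Cohen-Macaulay ambient any $n$ equations defining a codimension-$n$ subscheme form a regular sequence, yielding the desired local complete intersection structure.

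The anticipated main obstacle is precisely this codimension verification in the Cohen-Macaulay case, since $f \circ p_{\tilde{M}}$ is typically not flat; one must rule out that exceptional fibres of $p_{\tilde{M}}$ over $C(f) \cap M_{f(z)}$ produce extra components of $p_{\tilde{M}}^{-1}M_{f(z)}$ of dimension exceeding $m-n$. This is the point at which the \emph{sans \'{e}clatement en codimension 0} hypothesis is indispensable, and where one would most want to combine complex-analytic input about the Nash modification with the algebraic formalism of regular sequences.
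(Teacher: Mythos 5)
Your reduction of the third formula to the first rests on the Segre class identity
\begin{equation*}
s\bigl(p_{\tilde{M}}^{-1}(z),\, p_{\tilde{M}}^{-1}M_{f(z)}\bigr) \;=\; s\bigl(p_{\tilde{M}}^{-1}(z),\, \tilde{M}\bigr),
\end{equation*}
which you derive from the ``principle'' that $s(V,X)=c(N_{X/Y}\vert_V)\cap s(V,Y)$ for any closed $V\subset X$ with $X\hookrightarrow Y$ regular. That principle is false, and with it your key identity; this is exactly the trap discussed in the remark at the end of \S\ref{microlocal} (and the reason \cite{MR732620} Examples 4.2.7 and 4.2.8 are cited there). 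The example $M=\Cbb^2$, $f=xy$, $z=0$ is a counterexample in which \emph{both} of your supplementary hypotheses hold ($n=1$, and $\tilde{M}=\textup{Bl}_0\Cbb^2$ is smooth, hence Cohen--Macaulay) and the normal bundle of $p_{\tilde{M}}^{-1}M_{f(z)}=\mathscr{X}$ in $\tilde{M}$ restricted to $E=p_{\tilde{M}}^{-1}(0)$ is trivial, yet $s(E,\tilde{M})=[E]+[pt]$ while $s(E,\mathscr{X})=2[E]+2[pt]$. (A minimal instance of the failure of the general principle: $Y=\Cbb^2$, $X=\{x^2=0\}$, $V=\{x=0\}$; then $c(N_{X/Y}\vert_V)=1$ and $s(V,Y)=[V]$, but $s(V,X)=2[V]$ because the normal cone $C_VX$ is a length-two thickening of $V$.) So the two Segre classes are genuinely different; what is true is only the equality of the integrals after capping with $c(\tilde{T}_{M/N})$, and that equality cannot be obtained by ``transporting'' the first formula termwise as you propose.

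Your two sound observations --- that $p_{\tilde{M}}^{-1}(M_{f(z)})$ is purely $(m-n)$-dimensional (Proposition \ref{equidim}) and that in a Cohen--Macaulay ambient space $n$ equations cutting out codimension $n$ form a regular sequence --- are exactly the inputs the paper uses, but they enter at a different point: inside the computation of the composite Gysin pullback $(0_{\tilde{T}_{M/N}}\circ 0_{\tilde{M}})^{!}[\hat{K}\setminus K_{f(0)}'']$. Evaluating it in one step gives $\{c(\tilde{T}_{M/N})\cap s(p^{-1}_{\tilde{M}}(0),\tilde{M})\}_0$; evaluating it in two steps gives $\{c(\tilde{T}_{M/N})\cap s(p^{-1}_{\tilde{M}}(0),p^{-1}_{\tilde{M}}(M_{f(0)}))\}_0$ \emph{provided} the intermediate identity $0^{!}_{\tilde{T}_{M/N}}[\hat{K}\setminus K_{f(0)}'']=[K_{f(0)}']$ holds, and it is precisely there that the regular-sequence condition (automatic for $n=1$ by flatness, supplied by Cohen--Macaulayness for $n>1$) is needed. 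If you want to salvage your write-up, the argument must be rebuilt around that Gysin-theoretic comparison rather than an equality of Segre classes; you would also need to supply proofs of the first two displayed formulas, which your proposal takes as given.
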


\subsection{}\label{noblowup}
Before we enter into the proof of Theorem \ref{nash-integral}, we need to review the notion ``sans \'{e}clatement en codimension $0$" and various consequences of this condition. 

Let $M,N$ be two complex manifolds and let $Z$ be a reduced and irreducible analytic subspace of $M$. For any morphism $g: Z \to N$, we may consider its factorisation through the graph

\begin{equation*}
\begin{tikzcd}
Z \arrow[rd,"g"] \arrow[r] & M\times N \arrow[d] \\
                                  & N, 
\end{tikzcd}
\end{equation*}
so that $Z$ can be considered as a family of subspaces of $M$ parametrised by $N$. Suppose there is a dense open subset $Z^{\circ} \subset Z$ such that $g\vert_{Z^{\circ}}$ is a submersion; the relative conormal space $T^*_{g}M$ is the closure of the set $\{(z,\alpha)\in T^*M \vert_{Z^{\circ}}; \ \alpha(T_zZ_{g(z)}) =0\}$ in $T^*M$. Denote by $\tau_g$ the projection $T^*_gM \to Z$.

\begin{definition}\cite{MR760677}
The morphism $g$ is san \'{e}clatement en codimension $0$ if all the fibres of the composition map $g \circ \tau_g$ have the same dimension $m$.
\end{definition}

This condition has the following important consequences.

\begin{theorem}\label{Thomag}\cite{MR760677}
If $g$ is sans \'{e}clatement en codimension $0$, and if $W$ is a closed analytic subset of Z, there exists an open Zariski dense subset $W^{\circ}$ of $W$ such that the pair $(Z^{\circ},W^{\circ})$ satisfies the Thom $A_g$ condition. 
\end{theorem}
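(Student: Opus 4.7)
The plan is to follow the Lagrangian approach of Henry--Merle--Sabbah \cite{MR760677}: recast the Thom $A_g$ condition in terms of the relative conormal $T^{*}_{g}M$, and then exploit the equidimensionality furnished by the no-blowup hypothesis to verify it generically along $W$. First I would translate the geometric statement into a conormal one: the Thom $A_g$ condition at $y\in W^{\circ}$ asserts that every limit of tangent planes $T_{x_n}(g^{-1}(g(x_n))\cap Z^{\circ})$ along sequences $x_n\in Z^{\circ}\to y$ contains $T_yW^{\circ}$; dually, every covector in $\tau_g^{-1}(y)$ must vanish on $T_yW^{\circ}$. Thus the task reduces to finding a Zariski dense open $W^{\circ}\subset W$ such that for every $(y,\alpha)\in\tau_g^{-1}(W^{\circ})$ one has $\alpha|_{T_yW^{\circ}}=0$.

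Next I would decompose $\tau_g^{-1}(W)$ into its finitely many irreducible components $C_1,\ldots,C_r$ and handle each separately. The crux is the Lagrangian structure of the fibers of $g\circ\tau_g\colon T^{*}_{g}M\to N$. Over the open set where $g\vert_{Z^{\circ}}$ is smooth the generic fiber of $g\circ\tau_g$ is Lagrangian in $T^{*}M$ of dimension $m$; by the sans \'eclatement hypothesis every fiber still has dimension $m$, and an isotropy-plus-dimension argument (carried out in detail in loc.\ cit.) forces each irreducible component of every fiber to be Lagrangian, hence of the form $\overline{T^{*}_{V}M}$ for some irreducible analytic $V\subset g^{-1}(t)\cap Z$. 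Applied to $C_i\cap(g\circ\tau_g)^{-1}(t)$ this identifies each piece as the conormal cycle of a subvariety $V_{i,t}\subset \tau_g(C_i)\cap g^{-1}(t)$.

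Once this Lagrangian identification is in hand, the Thom condition falls out by a genericity and semicontinuity argument. On a Zariski dense open of $W$ one has $\tau_g(C_i)$ smooth, the map $\tau_g(C_i)\to N$ of constant rank, and the fiber dimension $\dim V_{i,t}$ locally constant in $t$; a dimension count, combined with the fact that $\alpha$ already annihilates $T_y(g^{-1}(g(y)))$, then forces $\alpha|_{T_yW^{\circ}}=0$ for $y$ in the chosen dense open. Taking $W^{\circ}$ to be the intersection of these opens over $i=1,\ldots,r$ produces the required Zariski dense subset. The principal obstacle is the Lagrangian step itself: without the no-blowup hypothesis some fiber of $g\circ\tau_g$ could acquire a component of excess dimension that is not a conormal cycle, and the Thom condition would fail. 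This is precisely the content of \cite{MR760677}, and my plan is to invoke their equidimensionality-Lagrangian dictionary as a black box rather than reprove it.
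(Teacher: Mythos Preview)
The paper does not supply a proof of this theorem at all: it is stated with the citation \cite{MR760677} and invoked as a known result from the literature. Your outline is a reasonable sketch of the Henry--Merle--Sabbah argument, and you yourself acknowledge that the Lagrangian step is to be taken as a black box from that reference; but for the purposes of this paper no proof is required, and the appropriate ``proof'' is simply the citation.
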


\begin{remark}
The existence of a stratification in $Z$ such that Thom $A_g$ condition is satisfied between any pair of strata guarantees the existence of the Milnor fibration. See \cite{MR3767427} for detailed explanations in both real and complex analytic cases.
\end{remark}

\begin{theorem}\label{Lagrangianlimit}\cite{MR760677}
Let $g: Z \to N$ be sans \'{e}clatement en codimension $0$, and let $h: S \to N$ be any analytic morphism. Also let $\Gamma$ be an irreducible component of $T_g^{*}M \times _N S$. Then $\Gamma$ can be identified with the relative conormal space of the morphism $\tau^{'}_g(\Gamma) \to N$. Here $\tau^{'}_g: T^*_gM \times_N S \to Z\times_N S$ is the base change of the morphism $\tau_g$.
\end{theorem}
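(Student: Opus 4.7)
Plan: I will identify $\Gamma$ with the relative conormal space of $W := \tau'_g(\Gamma) \to N$ by verifying the three properties that characterise a relative conormal space: invariance under the $\mathbb{C}^*$-dilation of cotangent fibres, correct dimension, and agreement with the honest conormal bundle of the family on a Zariski-dense open subset. The $\mathbb{C}^*$-invariance comes for free: $T_g^*M$ is conic in $T^*M$, and the base change $\tau'_g$ only affects the base factor, so every irreducible component of $T_g^*M \times_N S$ is automatically $\mathbb{C}^*$-stable in the cotangent direction.

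The dimension count is the heart of the argument, and is precisely where the hypothesis sans \'{e}clatement en codimension $0$ enters. By definition, the hypothesis asserts that every fibre of $g\circ\tau_g:T_g^*M\to N$ has dimension $m$. Since set-theoretic fibre dimensions are preserved under base change, every non-empty fibre of the composition $T_g^*M\times_N S\to S$ (taken through the second projection) also has dimension $m$. Consequently, each irreducible component $\Gamma$ whose projection to $S$ has image of dimension $s'$ satisfies $\dim\Gamma = m+s'$, which matches the dimension of the relative conormal of a family $W\to S$ with an $s'$-dimensional image and generic fibre of codimension $m-d$ in $M$ (where the conormal of such a fibre has total dimension $m$). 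Equidimensionality also forces the assignment $\Gamma\mapsto \tau'_g(\Gamma)$ to be injective: if two distinct components projected onto the same $W$, a generic fibre of $W\to S$ would carry two separate $m$-dimensional pieces, violating the uniform fibre dimension bound.

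For the generic identification, I would invoke Theorem \ref{Thomag} and usual openness results to select a dense open $W^\circ\subset W$ contained in $Z^\circ\times_N S^\circ$, where $Z^\circ\subset Z$ is the submersion locus of $g$ and $S^\circ\subset S$ is an open locus over which $W\to S$ is smooth. Over $Z^\circ$, by construction $T_g^*M$ equals the conormal bundle of the smooth fibres of $g$; the fibre product description of $\tau'_g$ then shows that $\Gamma\big|_{W^\circ}$ is exactly the bundle of covectors in $T^*M$ annihilating the tangents to the fibres of $W^\circ\to S$. By definition, this bundle is the restriction to $W^\circ$ of the relative conormal of $W\to N$. Since $\Gamma$ is closed and irreducible, and since both $\Gamma$ and the relative conormal are the closures of their agreeing restrictions over a Zariski-dense open, they coincide.

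The step I expect to require the most care is reconciling the generic identification with the global picture, namely verifying that the closure operation on both sides is controlled and that no embedded components of $T_g^*M\times_N S$ obstruct the dimension count or the injectivity of $\Gamma\mapsto \tau'_g(\Gamma)$. These potential pathologies are precisely what the sans \'{e}clatement condition rules out, turning the step into a careful but mechanical verification rather than a genuinely hard obstruction.
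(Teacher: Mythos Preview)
The paper does not prove this theorem; it is quoted from \cite{MR760677} with no argument supplied, so there is no in-paper proof to compare your proposal against.

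Regarding the proposal itself: the overall strategy (conic invariance, a dimension count driven by the equidimensional-fibre hypothesis, and generic agreement with the honest conormal bundle followed by closure) is the standard one and is essentially how the cited reference argues. Two points in your write-up need tightening. First, in the dimension paragraph you assert $\dim\Gamma = m+s'$, but you have only argued the upper bound $\dim\Gamma\le m+s'$ from the fibre-dimension hypothesis; the matching lower bound $\dim\Gamma\ge m+\dim S$ comes from the fact that $T_g^*M\times_N S$ is the preimage of the diagonal of $N\times N$ (a regular embedding, since $N$ is smooth) under $T_g^*M\times S\to N\times N$, which forces every irreducible component to have dimension at least $(m+n)+\dim S-n$. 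Combining the two bounds gives $s'=\dim S$, i.e.\ every component $\Gamma$ dominates $S$; this also underlies the injectivity you claim. Second, your generic identification presupposes that $W=\tau'_g(\Gamma)$ meets $Z^\circ\times_N S$ in a dense open subset, but this is not automatic and is exactly the sort of thing that fails without sans \'{e}clatement. You should deduce it from the equidimensionality just established: since $\Gamma$ dominates $S$, a generic fibre of $\Gamma\to S$ lies over a generic point of $N$, hence inside $Z^\circ$. With these two steps made explicit, the argument goes through.
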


Given a point $t \in \overline{g(Z^{\circ})}$, we can view the fibre $\tau^{-1}_gg^{-1}(t)$ as the limit of the family of conormal spaces associated to $g: Z \to N$. Theorem \ref{Lagrangianlimit} implies that each irreducible component of $\tau^{-1}_gg^{-1}(t)$ is the conormal space of some irreducible analytic subset of $M$. Therefore the cycle $[\tau^{-1}_gg^{-1}(t)]$ is a conic Lagrangian cycle . In general, one cannot expect $\tau^{-1}_gg^{-1}(t) = T^*_{g^{-1}(t)}M$ even for those $t \in g(Z^{\circ})$.

Let's apply these considerations on the basic setting of \S\ref{basicsetting}. We have a flat morphism $f: M \to N$ between two complex manifolds. Consider the graph embedding $M \to M \times N$ so that $M$ plays the role of $Z$ in the preceding paragraphs.

\begin{proposition}\label{nbl}
In either of the following cases
\begin{enumerate}
\item $\dim N =1$;
\item $f: M \to N$ is of finite contact type,
\end{enumerate}
$f$ is sans \'{e}clatement en codimension $0$.
\end{proposition}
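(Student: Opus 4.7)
The plan is to verify that every fiber of $\pi := f \circ \tau_f : X \to N$, where $X := T^*_f(M \times N)$, has the generic dimension $m+n$ (here the ambient space of the definition of sans \'{e}clatement en codimension $0$ is $M \times N$, of dimension $m+n$). A starting point common to both cases is that $X$ is irreducible of dimension $m+2n$: over the dense open submersive locus $M \setminus C(f) \subset M$, $\tau_f$ is a rank-$2n$ vector bundle on a connected smooth base, and $X$ is the closure in $T^*(M \times N)$ of this piece. Because $f$ is flat and hence open, $\pi$ is surjective onto a neighborhood of $f(z)$ in $N$, so the generic fiber of $\pi$ has dimension $(m+2n)-n = m+n$, and $\dim \pi^{-1}(t) \ge m+n$ for every $t \in N$ by upper-semicontinuity.

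For case (1), $\dim N = 1$, irreducibility already suffices. If some $\pi^{-1}(t_0)$ had dimension strictly larger than $m+1$, it would equal $m+2 = \dim X$, hence coincide with $X$ by irreducibility, contradicting that $\pi$ surjects onto the positive-dimensional $N$.

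For case (2) this shortcut no longer works, because a fiber of $\pi$ could acquire an intermediate dimension between $m+n+1$ and $m+2n-1$ without filling out all of $X$. Here I would use the finite contact hypothesis directly: fix $t \in N$ and decompose
\[
\pi^{-1}(t) \;=\; \tau_f^{-1}\big(f^{-1}(t) \setminus C(f)\big) \;\cup\; \tau_f^{-1}\big(f^{-1}(t) \cap C(f)\big).
\]
On the first piece $\tau_f$ is a rank-$2n$ vector bundle over the $(m-n)$-dimensional smooth part of $f^{-1}(t)$, so this piece has dimension $m+n$. The finite contact hypothesis says that every critical point of $f$ is isolated in its fiber, so $f^{-1}(t) \cap C(f)$ is a discrete set, and the second piece is a finite union of fibers $\tau_f^{-1}(z)$, each of which sits in the $(m+n)$-dimensional vector space $T^*_z(M \times N)$ and hence has dimension at most $m+n$. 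Combining, $\dim \pi^{-1}(t) \le m+n$, and together with the earlier lower bound this gives equality.

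The main conceptual point is the contrast between the two cases: upper-semicontinuity allows fiber dimension to jump by an arbitrary amount once $\dim N > 1$, so the clean irreducibility argument for case (1) does not close case (2) on its own. The ICIS assumption is precisely what forbids the jump, by forcing the locus where a jump could occur (namely $f^{-1}(t) \cap C(f)$) to be a finite set, so that the total contribution of critical points to $\pi^{-1}(t)$ is bounded by the ambient vector-space dimension.
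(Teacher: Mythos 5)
Your proof is correct and follows essentially the same route as the paper: the lower bound on fibre dimension from general dimension theory, irreducibility forcing equidimensionality when $\dim N=1$, and in the finite contact case a decomposition of $\tau_f^{-1}(f^{-1}(t))$ into the piece over the smooth locus of the fibre plus the pieces over the isolated critical points, each bounded by the dimension of a single cotangent space. The only (immaterial) difference is that you work with the conormal of the graph inside $T^*(M\times N)$, of dimension $m+2n$ with target fibre dimension $m+n$, whereas the paper's proof works inside $T^*M$, of dimension $m+n$ with target fibre dimension $m$; the two formulations differ by the trivial summand $f^*T^*N$ and are equivalent.
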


\begin{proof}
In case (1), $\dim N = 1$ implies that $f \circ \tau_{f}: T^*_fM \to N$ is flat. Hence all the fibres have the same dimension. In case (2), since $\dim T^*_fM = m+n$, we know $\dim \tau^{-1}_f f^{-1}(t) \geq m$ for any $t \in N$. On the other hand, since $\tau^{-1}_f f^{-1}(t)$ has only isolated singularities, $\tau^{-1}_f f^{-1}(t)$ has one irreducible component $T^*_{f^{-1}(t)}M$ dominating $f^{-1}(t)$, and one extra irreducible component for each singular point of $f^{-1}(t)$. If $z$ is one of the singular points, the irreducible component of $\tau^{-1}_f f^{-1}(t)$ mapping down to $z$ is a cone in $T^*_zM$, hence its dimension is at most $m$. 
\end{proof}

Unless otherwise stated, we will always assume $f$ is flat and sans \'{e}clatement en codimension $0$. Fix an arbitrary point $z \in M$, let $\Gamma_1, \ldots, \Gamma_r$ be the irreducible components of $\tau^{-1}_f(M_{f(z)})$, and let $Z_i = \tau_f(\Gamma_i)$. By our previous discussion, $\Gamma_i = T^*_{Z_i}M$ for each $i$. We wish to understand the irreducible components of $p^{-1}_{\tilde{M}}(M_{f(z)})$ in terms of $Z_1, \ldots, Z_r$. In fact, one can see that the correspondence between the conormal space and the Nash transformation (discussed for example in \cite{MR1063344} \S 1) extends  to a correspondence between the relative conormal space and the relative Nash transformation. We have the following commutative diagram
\begin{equation*}
\begin{tikzcd}
S\vert_{\tilde{M}} \arrow[r] \arrow[d] & T^*_fM \arrow[d,"\tau_f"] \\
\tilde{M} \arrow[r,"p_{\tilde{M}}"]                              & M 
\end{tikzcd}
\end{equation*}
where the horizontal arrows are proper and bimeromorphic. Restricting this diagram to $M_{f(z)}$, we get a proper surjective morphism from $S\vert_{p^{-1}_{\tilde{M}}(M_{f(z)})}$ to $\tau^{-1}_f(M_{f(z)})$. Let $W$ be an irreducible component of $p^{-1}_{\tilde{M}}(M_{f(z)})$ and let $Z' = p_{\tilde{M}}(W)$. By Theorem \ref{Thomag}, $Z'$ contains an open regular subset $Z^{\circ}$ such that $W^{\circ} := p^{-1}_{\tilde{M}}(Z^{\circ}) \cap W$ is open in $K$ and $S\vert_{W^{\circ}} \subset T^*_{Z^{\circ}_i}M$. Taking the closure, we conclude that $S\vert_{W}$ is mapped bimeromorphically onto an irreducible subset of $T^*_{Z'}M$. Therefore, $\dim S\vert_{W} \leq m$ and $\dim W \leq m-n$. On the other hand, $p^{-1}_{\tilde{M}}(M_{f(z)})$ is the fibre of $f\circ p_{\tilde{M}}$ over $f(z)$. The dimensions of its irreducible components are no less than $\dim \tilde{M} - \dim N = m-n$. This argument shows that $\dim W = m-n$, and $Z'$ is one of the $Z_1, \ldots, Z_r$, and $S\vert_W$ is mapped bimeromorphically onto $T^*_{Z'}M$. This proves the following

\begin{proposition}\label{equidim}
$p^{-1}_{\tilde{M}}(M_{f(z)})$ is purely $(m-n)$-dimensional. Its irreducible components are in 1-1 correspondence with the irreducible components of $\tau^{-1}_f(M_{f(z)})$. 
\end{proposition}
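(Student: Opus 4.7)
The plan is to leverage the commutative diagram linking the relative Nash modification $\tilde{M}$ with the relative conormal space $T^*_f M$ via the rank-$n$ tautological subbundle $S|_{\tilde M}$; both horizontal arrows $S|_{\tilde M} \to T^*_f M$ and $\tilde M \to M$ are proper and bimeromorphic. Restricting the diagram to $M_{f(z)}$ yields a proper surjective morphism $S|_{p^{-1}_{\tilde M}(M_{f(z)})} \to \tau_f^{-1}(M_{f(z)})$, and since $S|_{\tilde M}$ is a vector bundle of rank $n$, the dimensions and irreducible components of the fiber upstairs control those of $p^{-1}_{\tilde M}(M_{f(z)})$ downstairs.

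Next I would pick an irreducible component $W$ of $p^{-1}_{\tilde M}(M_{f(z)})$ and set $Z' := p_{\tilde M}(W)$. Applying Theorem \ref{Thomag} to $Z' \subset M$ produces a Zariski dense open subset $Z^\circ \subset Z'$ satisfying the Thom $A_f$ condition against the smooth locus of $f$; the conormal limits captured by $S|_{W^\circ}$, with $W^\circ = W \cap p^{-1}_{\tilde M}(Z^\circ)$, are then forced to lie in $T^*_{Z^\circ}M$. Taking closures, $S|_W$ maps bimeromorphically onto an irreducible closed subset of $T^*_{Z'}M$ that is contained in $\tau_f^{-1}(M_{f(z)})$. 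By Theorem \ref{Lagrangianlimit}, the irreducible components of $\tau_f^{-1}(M_{f(z)})$ are exactly the $\Gamma_i = T^*_{Z_i}M$, so $Z'$ must coincide with one of the $Z_i$ and $S|_W$ maps bimeromorphically onto $\Gamma_i$. In particular $\dim S|_W \le m$, which gives $\dim W \le m-n$. The opposite inequality $\dim W \ge m-n$ follows from flatness: $f \circ p_{\tilde M}: \tilde M \to N$ has all fibers of dimension exactly $\dim \tilde M - n = m-n$. Hence $\dim W = m-n$ and the fiber is purely of this dimension.

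The remaining task, and the place I expect the most bookkeeping, is to upgrade the assignment $W \mapsto \Gamma_i$ to an honest bijection. Injectivity should follow from the bimeromorphy of $S|_{\tilde M} \to T^*_f M$: two distinct components $W \neq W'$ would produce two distinct irreducible $m$-dimensional pieces $S|_W$ and $S|_{W'}$ both mapping bimeromorphically onto the same $T^*_{Z_i}M$, which is impossible once one restricts to the open loci where the bimeromorphic maps are isomorphisms. For surjectivity, I would start from a component $\Gamma_i$, pull it back through the proper bimeromorphic morphism $S|_{\tilde M} \to T^*_f M$ to obtain an irreducible $m$-dimensional closed subset of $S|_{\tilde M}$, and project it to an irreducible $(m-n)$-dimensional component of $p^{-1}_{\tilde M}(M_{f(z)})$. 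The subtle point is ensuring that these two assignments are mutually inverse and that no extra components of $p^{-1}_{\tilde M}(M_{f(z)})$ hide inside the indeterminacy loci of the bimeromorphic maps; the Thom $A_f$ closure argument above is precisely what rules this out, by guaranteeing that each $W$ is uniquely recovered as the image of the corresponding $S|_W$ in $\tilde M$.
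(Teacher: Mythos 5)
Your proposal follows essentially the same route as the paper: the same commutative diagram relating $S\vert_{\tilde M}$ to $T^*_fM$, the same use of Theorem \ref{Thomag} to force $S\vert_{W^\circ}\subset T^*_{Z^\circ}M$, the same closure and dimension count giving $\dim W\le m-n$, and the identification of $Z'$ with one of the $Z_i$ via Theorem \ref{Lagrangianlimit}. The paper is just as terse as you are on upgrading $W\mapsto \Gamma_i$ to a bijection, so that part is not a point of divergence.

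There is, however, one concrete misstep: your justification of the lower bound $\dim W\ge m-n$. You claim that $f\circ p_{\tilde M}$ is flat and hence has all fibres of dimension exactly $m-n$. This is not available: $f$ is flat, but $p_{\tilde M}$ is the blowup of $C(f)$, which is not flat unless it is an isomorphism, and flatness does not survive composition in this direction. Moreover, if you already knew every fibre of $f\circ p_{\tilde M}$ had dimension exactly $m-n$, the entire upper-bound argument would be superfluous --- the claim begs the question. The paper instead invokes the general fact that every irreducible component of a fibre of a morphism from an irreducible variety has dimension at least $\dim\tilde M-\dim N=m-n$; this holds with no flatness hypothesis and is what you should cite here. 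With that substitution your argument goes through and coincides with the paper's.
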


\begin{remark}
Given a morphism $f:M \to \mathbb{C}^n$, there is one more Lagrangian specialisation to consider. In the diagram
 
 \begin{equation*}
\begin{tikzcd}
M \times \mathbb{C} \arrow[r] \arrow[rd]  & (M \times \mathbb{C}^n) \times \mathbb{P}^1 \arrow[d] \\
                                                                 & \mathbb{P}^1 
\end{tikzcd}
\end{equation*}
where the horizontal arrow is given by $(z,t) \to (z,-tf(z),t)$, we can view $M \times \mathbb{C}$ as a family of graphs in $M \times \mathbb{C}^n$. The basic consideration of this paper is to study the specialisation of the family of conormal spaces associated with this family of graphs. In the general situation where we start with a morphism $f: M \to N$, the family of graphs in $M \times N$ cannot be defined, but the graph construction can serve as a substitute for the deformation of conormal spaces.
 
\end{remark}

\subsection{}
We can move on to prove Theorem \ref{nash-integral} now. The proof is a rather straightforward generalisation of the theory established in \cite{MR629121} and \cite{MR804052}. 

Take any $z \in M$. We will only concern the local structure of $f$ near $z$ so that without loss of generality we assume $M$ ($N$) is an open subset of $\mathbb{C}^m$ ($\mathbb{C}^n$). Choose holomorphic coordinate systems on $M$ and $N$ such that $z = 0 \in \mathbb{C}^m$ and $f(0)=0 \in \mathbb{C}^n$. We also fix an Hermitian metric $H$ on $\mathbb{C}^m$. Let $B_{\epsilon}$ ($D^{\circ}_{\eta}$) be a small $\epsilon$-ball (open $\eta$-ball) around $z$ ($f(z)$), with $0<\eta \ll \epsilon \ll 1$, and let $B^{\circ}_{\epsilon} = B_{\epsilon} \setminus \partial B_{\epsilon}$. There is a radial section $r$ of the real tangent bundle of $M$, sending each $z' \in M$ to $r(z') = \overrightarrow{zz'}$. If we use the canonical $\mathbb{R}$-linear isomorphism between the real tangent bundle of $M$ and the real vector bundle underlying $TM$, $r$ corresponds to the holomorphic section $2\Sigma z_i\frac{\partial}{\partial z_i}$. Let $\tilde{r}$ be the section of $p^{*}_{\tilde{M}}TM$ defined by $p^{*}_{\tilde{M}}(r)$. Over $\tilde{M}$ we have the exact sequence

\begin{equation*}
0 \to \tilde{T}_{M/N} \to p^{*}_{\tilde{M}}TM \to S^{\vee}\vert_{\tilde{M}} \to 0,
\end{equation*}
and with the help of the Hermitian metric $H$, we get a (non-holomorphic) splitting
\begin{equation*}
p^{*}_{\tilde{M}}TM \cong \tilde{T}_{M/N} \oplus S^{\vee}\vert_{\tilde{M}}.
\end{equation*} 
We denote the component of $\tilde{r}$ in $\tilde{T}_{M/N}$ by $\sigma_{H}$. It is just a $C^{\infty}$ section of $\tilde{T}_{M/N}$. 

Given a point $\delta \in N$, we let $\delta_{\tilde{T}_{M/N}}: \tilde{T}_{M/N} \to \tilde{T}_{M/N} \times N$ be the closed embedding whose first factor is the identity and the second factor is the constant map to $\delta$, i.e. $(id_{\tilde{T}_{M/N}}\text{,}\delta)$. Note that the normal bundle of $\delta_{\tilde{T}_{M/N}}$ is trivial. Setting things up in this way, the following Cartesian diagram (in the category of sets) will play an important role in the sequel.

\begin{equation}\label{intersect0section}
\begin{tikzcd}
p^{-1}_{\tilde{M}}(M_{f(0)}) \arrow[r] \arrow[d] & \tilde{M} \arrow[d,"(\sigma_{H}\text{,} f \circ p_{\tilde{M}})"] \\
\tilde{T}_{M/N}  \arrow[r,"0_{\tilde{T}_{M/N}}"]                   & \tilde{T}_{M/N} \times N 
\end{tikzcd}
\end{equation}

Let $K = p^{-1}_{\tilde{M}}((B_{\epsilon} \times D^{\circ}_{\eta}) \cap M)$ and $\partial K = p^{-1}_{\tilde{M}}((\partial B_{\epsilon} \times D^{\circ}_{\eta}) \cap M)$ where $M$ is identified with the graph of $f$ and the intersection takes place inside $M \times N$. We also let $K^{\circ} = K \setminus \partial K$. The figure in \cite{MR747303} \S 2.B may help the reader to see what is going on.

\begin{proposition}
$\sigma_H$ is non-vanishing on $\partial K$. Because this holds true for any sufficiently small $\epsilon$, $\sigma_H$ is in particular non-vanishing on $p^{-1}_{\tilde{M}}(B_{\epsilon} \cap M_{f(0)}\setminus \{0\})$.
\end{proposition}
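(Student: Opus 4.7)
The plan is to argue by contradiction, combining the semi-analytic curve selection lemma with the Thom $A_f$ condition afforded by Theorem \ref{Thomag}. First, I would assume the conclusion fails; then, after shrinking $\eta$ together with $\epsilon$, the real analytic subset $X := \{\sigma_H = 0\} \subset \tilde{M}$ accumulates at some point $\tilde{z}_0 \in p_{\tilde{M}}^{-1}(0)$ through points outside $p_{\tilde{M}}^{-1}(0)$. The curve selection lemma for semi-analytic sets will then furnish a real analytic arc $\gamma:[0,\delta) \to \tilde{M}$ with $\gamma(0) = \tilde{z}_0$, $\gamma(t) \in X$ and $W(t) := p_{\tilde{M}}(\gamma(t)) \ne 0$ for $t \in (0,\delta)$. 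Note that $\sigma_H$ is indeed real analytic, since the Hermitian-orthogonal splitting of $p_{\tilde{M}}^{*} TM$ is real analytic and $r$ is holomorphic; so $X$ is a real analytic subset.

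Next, invoking Theorem \ref{Thomag}, I would fix a Whitney-type stratification $\{S_\alpha\}$ of $M$ adapted to $C(f)$ and satisfying the Thom $A_f$ condition between any pair of strata, and then shrink $\delta$ so that $W((0,\delta))$ lies in a single stratum $S_\alpha$ with $0$ in a stratum $S_\beta \subset \overline{S_\alpha}$. Writing $V(t) \subset T_{W(t)} M$ for the $(m-n)$-plane represented by $\gamma(t)$ under the inclusion $\tilde{M} \subset \textup{Grass}_{m-n}(TM)$, one has $V(t) = T_{W(t)} f^{-1}(f(W(t)))$ at regular points, and in general $V(t) \supset T_{W(t)}(S_\alpha \cap f^{-1}(f(W(t))))$ by Thom $A_f$. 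The hypothesis $\sigma_H(\gamma(t)) = 0$ amounts, under the standard $\mathbb{R}$-linear identification of the real tangent bundle of $M$ with $TM$, to the Hermitian orthogonality $W(t) \perp_H V(t)$.

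Finally, a Milnor-style energy argument would close the proof: on an arc for which $f \circ W \equiv c$ is locally constant, one has $W'(t) \in T_{W(t)}(S_\alpha \cap f^{-1}(c)) \subset V(t)$, so the orthogonality forces $\frac{d}{dt}\|W(t)\|^2 = 2\,\mathrm{Re}\,\langle W(t), W'(t)\rangle_H = 0$, contradicting $W(0) = 0 \ne W(t)$ for $t > 0$. The hard part will be producing such a constant-fiber refinement of $\gamma$ --- equivalently, ensuring that $X$ accumulates at $p_{\tilde{M}}^{-1}(0)$ through points sharing a single value of $f$, rather than merely with $f$ tending to $0$. I expect this to follow from a {\L}ojasiewicz-type comparison between $\|W(t)\|$ and $\|f(W(t))\|$ along any arc in $X$, itself a consequence of the sans \'{e}clatement en codimension 0 hypothesis reflected in the equidimensionality of $\tau_f^{-1}(f(z))$ recorded in Proposition \ref{equidim}.
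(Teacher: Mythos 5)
Your skeleton --- contradiction, curve selection, Thom $A_f$ in place of Whitney $(a)$, and the Milnor-type energy computation $\frac{d}{dt}\|W(t)\|^2 = 2\,\mathrm{Re}\,\langle W(t),W'(t)\rangle_H = 0$ --- is exactly the argument the paper relies on: its proof is a citation of Gonz\'{a}lez-Sprinberg's absolute-case proposition, with the Nash modification replaced by $\tilde{M}$ and the Whitney $A$ condition replaced by Thom $A_f$ via Theorem \ref{Thomag}. However, the step you defer as ``the hard part'' is a genuine gap in the way you have structured the argument, and the {\L}ojasiewicz-type comparison you propose is not the way to close it: Proposition \ref{equidim} is a statement about the dimensions of the irreducible components of $p^{-1}_{\tilde{M}}(M_{f(z)})$ and gives no metric control of $\|W(t)\|$ against $\|f(W(t))\|$; and for an arc along which $f\circ W$ is nonconstant, $W'(t)$ simply need not lie in $V(t)$, so the orthogonality $W(t)\perp_H V(t)$ yields nothing.

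The correct repair is to reverse the logical order of the two assertions. Prove the second one first: apply the curve selection lemma not to $\{\sigma_H=0\}$ but to the semi-analytic set $\{\sigma_H=0\}\cap p^{-1}_{\tilde{M}}(M_{f(0)})$, assuming it accumulates at $p^{-1}_{\tilde{M}}(0)$ through points outside $p^{-1}_{\tilde{M}}(0)$. The resulting arc automatically satisfies $f(W(t))\equiv 0$, so the constant-fibre hypothesis of your energy argument holds for free, Thom $A_f$ gives $W'(t)\in T_{W(t)}\bigl(S_\alpha\cap f^{-1}(0)\bigr)\subset V(t)$, and you reach the contradiction $\|W(t)\|\equiv\|W(0)\|=0$. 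This establishes that $\sigma_H$ is non-vanishing on $p^{-1}_{\tilde{M}}(B_{\epsilon_0}\cap M_{f(0)}\setminus\{0\})$ for some $\epsilon_0$. The assertion about $\partial K$ is then a consequence rather than the starting point: for fixed $\epsilon<\epsilon_0$ the set $p^{-1}_{\tilde{M}}(\partial B_{\epsilon}\cap M_{f(0)})$ is compact and disjoint from the closed set $\{\sigma_H=0\}$; since $p_{\tilde{M}}$ is proper and the sets $\{z'\in\partial B_{\epsilon} : f(z')\in D^{\circ}_{\eta}\}$ shrink to $\partial B_{\epsilon}\cap M_{f(0)}$ as $\eta\to 0$, one gets $\{\sigma_H=0\}\cap\partial K=\emptyset$ once $\eta$ is small enough relative to $\epsilon$. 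This compactness step is precisely where the hypothesis $0<\eta\ll\epsilon$ is consumed, and no comparison of $\|W(t)\|$ with $\|f(W(t))\|$ is needed anywhere.
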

\begin{proof}
The proof follows word by word from the proof of a corresponding proposition in the absolute case ($N$ is a point) given by Gonzalez-Sprinberg (\cite{MR629121} \S 4.1, page 12), except that we replace the use of Nash modification by the relative Nash modification $\tilde{M}$ and the use of Whitney $A$ condition by Thom $A_f$ condition whose validity is inferred from Theorem \ref{Thomag}.
\end{proof}

This proposition indicates that we can extend diagram \eqref{intersect0section} into the following diagram
\begin{equation*}
\begin{tikzcd}
p^{-1}_{\tilde{M}}(0) \arrow[r] \arrow[d] & K^{\circ} \cap p^{-1}_{\tilde{M}}(M_{f(0)}) \arrow[r] \arrow[d,"\sigma_H"] & K^{\circ} \arrow[d,"\tilde{\sigma}_{H}"] \\
\tilde{M} \arrow[r,"0_{\tilde{M}}"] & \tilde{T}_{M/N} \arrow[r,"0_{\tilde{T}_{M/N}}"] & p^{*}_1\tilde{T}_{M/N}
\end{tikzcd}
\end{equation*}
where both squares are Cartesian in the category of sets. Intuitively, we wish to identify
\begin{equation*}
\int c(\tilde{T}_{M/N})\cap s(p^{-1}_{\tilde{M}}(0), \tilde{M}) = \int c(\tilde{T}_{M/N})\cap s(p^{-1}_{\tilde{M}}(0), K^{\circ})
\end{equation*}
with the degree of $0^{!}_{\tilde{M}}0^{!}_{\tilde{T}_{M/N}}[K^{\circ}] \in H^{BM}_{0}(p^{-1}_{\tilde{M}}(0))$. The problem, of course, is that $\sigma_H$ is not holomorphic, and consequently these Gysin maps are not well defined. To overcome this difficulty, we let $\nu: \textup{Grass}_n(p^{*}_{\tilde{M}}TM) \to \tilde{M}$ be the projection and let $U$ be the open subset in $\textup{Grass}_n(p^{*}_{\tilde{M}}TM)$ over which the tautological subbundle of $\nu^{*}p^{*}_{\tilde{M}}TM$ is a complement of $\nu^*\tilde{T}_{M/N}$. Namely, 
\begin{equation*}
\nu^{*}p^{*}_{\tilde{M}}TM \cong \nu^*\tilde{T}_{M/N} \oplus \textup{taut}
\end{equation*}
over $U \subset \textup{Grass}_n(p^{*}_{\tilde{M}}TM)$. Note that $U$ is a principle homogeneous space with the Abelian group $\textup{Hom} (\mathbb{C}^n, \mathbb{C}^{m-n})$. Therefore $\nu \vert_{U}$ is flat. We also let $\hat{K} = (\nu^{-1}K^{\circ}) \cap U$ and $\hat{\nu} = \nu \vert_{\hat{K}}$. With this construction, we can split the section $\hat{\nu}^{*}(\tilde{r})$ analytically. Let $\sigma_1$ ($\sigma_2$) be its component in $\hat{\nu}^*\tilde{T}_{M/N}$ ($\textup{taut}\vert_{\hat{K}}$). 

\begin{proposition}
Let $u \in \hat{\nu}^{-1}p^{-1}_{\tilde{M}}(0)$, and let $c>0$ be a constant. There exists a neighbourhood $W_u$ of $u$ in $\hat{\nu}^{-1}p^{-1}_{\tilde{M}}(M_{f(0)})$ such that $\|\sigma_2\| < c \|\sigma_1\|$ in $W_u$. 
\end{proposition}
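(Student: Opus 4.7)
The plan is to adapt the argument of Gonz\'{a}lez-Sprinberg (\cite{MR629121} \S 4.1) to the relative setting, with the Thom $A_f$ condition supplied by Theorem \ref{Thomag} playing the role of the Whitney conditions in the absolute case. I would argue by contradiction: suppose there is a sequence $u_k = (\tilde{z}_k, V_k) \to u = (\tilde{z}, V)$ in $\hat{\nu}^{-1}p^{-1}_{\tilde{M}}(M_{f(0)})$ satisfying $\|\sigma_2(u_k)\| \geq c\|\sigma_1(u_k)\|$ for all $k$. Writing $z_k = p(\tilde{z}_k) \in M_{f(0)}$ we have $z_k \to 0$, so $r(z_k) \to 0$, and the decomposition $r(z_k) = \sigma_1(u_k) + \sigma_2(u_k)$ takes place in $T_{z_k}M = \tilde{T}_{M/N}(\tilde{z}_k) \oplus V_k$.

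After passing to a subsequence, assume $z_k/\|z_k\| \to \ell$ for some unit vector $\ell$ and $V_k \to V$; continuity of the relative Nash bundle over $\tilde{M}$ automatically gives $\tilde{T}_{M/N}(\tilde{z}_k) \to \tilde{T}_{M/N}(\tilde{z})$. The crux of the argument is to establish $\ell \in \tilde{T}_{M/N}(\tilde{z})$. Granted this, the transversality $\tilde{T}_{M/N}(\tilde{z}) \cap V = 0$ (built into the definition of $U$) together with continuity of the direct-sum decomposition along a pair of converging complementary subspaces yields $\sigma_2(u_k)/\|z_k\| \to 0$ while $\sigma_1(u_k)/\|z_k\| \to \ell \neq 0$, contradicting the assumption $\|\sigma_2(u_k)\| \geq c\|\sigma_1(u_k)\|$.

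To prove $\ell \in \tilde{T}_{M/N}(\tilde{z})$, I would fix a Whitney stratification of $M$ compatible with $C(f)$ and $M_{f(0)}$ over which $f$ is Thom $A_f$ (Theorem \ref{Thomag} combined with the usual extension to a full stratification). After refining the subsequence, assume all $z_k$ lie in a single stratum $X_\beta \subset M_{f(0)}$, and let $X_\alpha$ be the stratum containing $0$. When $X_\beta \not\subset C(f)$, Whitney $b$ for the pair $(X_\alpha, X_\beta)$ gives $\ell \in \lim T_{z_k}X_\beta \subset \lim T_{z_k}M_{f(0)} = \lim \tilde{T}_{M/N}(\tilde{z}_k) = \tilde{T}_{M/N}(\tilde{z})$, the middle inclusion using $X_\beta \subset M_{f(0)}$. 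When $X_\beta \subset C(f)$, pick a stratum $X_\gamma \supset X_\beta$ contained in $M_{f(0)}$ with $X_\gamma \not\subset C(f)$, approximate each $z_k$ by smooth points $w_k^j \in X_\gamma$ with $w_k^j \to z_k$, and invoke the Thom $A_f$ condition together with continuity of $\tilde{T}_{M/N}$ to reduce to the previous case via a diagonal extraction on $(k,j)$.

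The main obstacle I anticipate is this singular-stratum case: although the ingredients (Thom $A_f$, Whitney $b$, continuity of the Nash bundle) are all standard, the diagonal extraction must be arranged so that both the secant directions $w_k^j/\|w_k^j\|$ remain close to $\ell$ and the tangent planes $T_{w_k^j}X_\gamma$ stay in a compact family in the Grassmannian converging to a subspace of $\tilde{T}_{M/N}(\tilde{z})$. This bookkeeping is routine but delicate, and parallels exactly Gonz\'{a}lez-Sprinberg's treatment in the absolute case.
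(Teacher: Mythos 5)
Your overall strategy coincides with the paper's, which simply instructs the reader to duplicate Gonz\'{a}lez-Sprinberg's proof (\cite{MR629121}, p.~17) with the Whitney condition replaced by Thom $A_f$; your contradiction scheme, the final transversality step (projections along a converging pair of complementary subspaces $\tilde{T}_{M/N}(\tilde{z}_k)\oplus V_k$), and the case of a stratum $X_\beta\subset M_{f(0)}\setminus C(f)$ are all correct and are exactly what that duplication amounts to.

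The genuine gap is in your singular-stratum case, and it is worse than the ``delicate bookkeeping'' you anticipate. First, a stratum $X_\gamma\subset M_{f(0)}$ with $X_\gamma\not\subset C(f)$ and $X_\beta\subset\overline{X_\gamma}$ need not exist: a whole component of $|M_{f(0)}|$ can lie inside $C(f)$ (already for $f(x,y)=x^2$ from $\Cbb^2$ to $\Cbb$, where $|M_0|=C(f)$ is the $y$-axis and $M_{f(0)}\setminus C(f)=\emptyset$, yet the statement is non-vacuous there and true). Second, even when $X_\gamma$ exists, your approximating points $w_k^j$ have unique Nash lifts, and nothing guarantees those lifts converge to the \emph{given} point $\tilde{z}_k\in p^{-1}_{\tilde{M}}(z_k)$; the fibre of $p_{\tilde{M}}$ over a critical point records limits of relative tangent planes from all of $M\setminus C(f)$, most of which are invisible from $X_\gamma$. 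The missing idea is that the approximation must be organised by the sans-\'{e}clatement structure rather than by $M_{f(0)}\setminus C(f)$: by Proposition \ref{equidim} (via Theorem \ref{Lagrangianlimit}), $\tilde{z}_k$ lies in a component $W$ of $p^{-1}_{\tilde{M}}(M_{f(0)})$ for which $S\vert_W$ maps bimeromorphically onto a conormal space $T^*_{Z'}M$ with $Z'=p_{\tilde{M}}(W)\subset M_{f(0)}$. Hence every covector in $S(\tilde{z}_k)$ is a limit of conormals of $Z'^{\,\mathrm{reg}}$ at points of $Z'$ near $z_k$, and the claim $\ell\in\tilde{T}_{M/N}(\tilde{z})=S(\tilde{z})^{\mathrm{ann}}$ reduces, after a diagonal extraction \emph{inside $Z'$}, to Whitney's condition (b) for the pair $(Z'^{\,\mathrm{reg}},\{0\})$ — which holds automatically because the locus where (b) fails is a proper analytic subset of the point stratum $\{0\}$. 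In the example above this gives $Z'=\{x=0\}$, $T^*_{Z'}M=\langle dx\rangle$, and $\ell=\partial/\partial y$ is indeed annihilated. Note that this is also where the hypothesis ``sans \'{e}clatement en codimension $0$'' genuinely enters; Thom $A_f$ by itself constrains limits of relative tangent planes but says nothing about secant directions.
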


\begin{proof}
Duplicate the proof of the corresponding proposition on \cite{MR629121} page 17, with the Whitney $A$ condition replaced by Thom $A_f$ condition. The figures on page 17 and 18 loc.cit. are quite enlightening.
\end{proof}

\begin{corol}
Let $Z(\sigma_1)$ be the zero space of $\sigma_1$. $\hat{\nu}^{-1}p^{-1}_{\tilde{M}}(0)$ is closed as an analytic subspace of $Z(\sigma_1) \cap \hat{\nu}^{-1}p^{-1}_{\tilde{M}}(M_{f(0)})$, and is open as a subset of $|Z(\sigma_1) \cap \hat{\nu}^{-1}p^{-1}_{\tilde{M}}(M_{f(0)})|$.
\end{corol}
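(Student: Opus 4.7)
The plan is to deduce the corollary directly from the preceding proposition together with the fact that the radial section $r$ vanishes at $0$. The two assertions are almost independent: closedness uses only the continuity (and holomorphy) of $p_{\tilde{M}} \circ \hat{\nu}$, while openness uses the estimate $\|\sigma_2\| < c\|\sigma_1\|$ supplied by the proposition.

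First I would handle closedness. Since $r(z') = \overrightarrow{0z'}$ vanishes at $0$, the pullback $\tilde{r} = p_{\tilde{M}}^{*}(r)$ vanishes identically on $p_{\tilde{M}}^{-1}(0)$, hence so does $\hat{\nu}^{*}(\tilde{r}) = \sigma_1 + \sigma_2$ on $\hat{\nu}^{-1}p_{\tilde{M}}^{-1}(0)$; since $\sigma_1$ and $\sigma_2$ take values in complementary summands, both vanish there. Therefore $\hat{\nu}^{-1}p_{\tilde{M}}^{-1}(0) \subseteq Z(\sigma_1)$ as analytic subspaces, and the inclusion into $\hat{\nu}^{-1}p_{\tilde{M}}^{-1}(M_{f(0)})$ is obvious since $f(0) = 0$. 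Closedness (as a topological subspace) then follows from $\hat{\nu}^{-1}p_{\tilde{M}}^{-1}(0) = (p_{\tilde{M}} \circ \hat{\nu})^{-1}(\{0\})$ with $\{0\}$ closed in $M$ and $p_{\tilde{M}} \circ \hat{\nu}$ holomorphic, which simultaneously gives it the structure of a closed analytic subspace.

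Next I would show openness by a purely local argument using the preceding proposition. Fix $u \in \hat{\nu}^{-1}p_{\tilde{M}}^{-1}(0)$ and choose, say, $c = 1$; the proposition furnishes a neighbourhood $W_u$ of $u$ in $\hat{\nu}^{-1}p_{\tilde{M}}^{-1}(M_{f(0)})$ on which $\|\sigma_2\| < \|\sigma_1\|$ (interpreted, where necessary, as saying the pointwise inequality forces the zero locus of $\sigma_1$ within $W_u$ to be contained in the zero locus of $\sigma_2$). Now take any $u' \in W_u \cap Z(\sigma_1)$. Since $\sigma_1(u') = 0$, the estimate forces $\sigma_2(u') = 0$ as well; consequently $\hat{\nu}^{*}(\tilde{r})(u') = 0$, which means $r(p_{\tilde{M}} \circ \hat{\nu}(u')) = 0$. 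But $r$ vanishes only at $0 \in M$, so $p_{\tilde{M}} \circ \hat{\nu}(u') = 0$, i.e.\ $u' \in \hat{\nu}^{-1}p_{\tilde{M}}^{-1}(0)$. Thus $W_u \cap Z(\sigma_1) \cap \hat{\nu}^{-1}p_{\tilde{M}}^{-1}(M_{f(0)}) \subseteq \hat{\nu}^{-1}p_{\tilde{M}}^{-1}(0)$, which is the required openness.

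The only subtle point—what I would flag as the main obstacle—is the interpretation of the strict inequality in the preceding proposition at points where both sections vanish. The geometry forces the conclusion I want (namely $\sigma_1(u') = 0 \Rightarrow \sigma_2(u') = 0$ in $W_u$), and this is precisely the content the proposition is designed to deliver by taking $c$ small; apart from this small bookkeeping, the argument is routine and parallels the classical Gonz\'alez-Sprinberg reasoning, with the Nash modification replaced by $\tilde{M}$ and Whitney~$A$ replaced by Thom~$A_f$.
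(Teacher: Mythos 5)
Your argument is correct and follows essentially the same route as the paper's: closedness comes from the fact that $\sigma_1$ is the image of $\hat{\nu}^{*}(\tilde{r})$ under the holomorphic projection onto $\hat{\nu}^{*}\tilde{T}_{M/N}$, so its components are holomorphic combinations of the components of $\hat{\nu}^{*}(\tilde{r})$, which generate exactly the ideal of $\hat{\nu}^{-1}p^{-1}_{\tilde{M}}(0)$ (this is the scheme-theoretic version of your ``vanishes identically'' step, and it is what the later corollary on integral closures actually needs); openness is precisely what the preceding proposition supplies. The only point worth tightening is the one you flag yourself: the estimate $\|\sigma_2\| < c\|\sigma_1\|$ is to be read on $W_u$ away from the central fibre, where it shows $\sigma_1$ has no zeros at all, which yields $Z(\sigma_1)\cap W_u \subseteq \hat{\nu}^{-1}p^{-1}_{\tilde{M}}(0)$ directly rather than via the (strictly speaking contradictory) deduction $\sigma_1(u')=0 \Rightarrow \sigma_2(u')=0$ at a point where the strict inequality is assumed to hold.
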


\begin{proof}
Clearly $0 \in M$ is the only zero of the radial section $r$, so $\hat{\nu}^{-1}p^{-1}_{\tilde{M}}(0)$ is the zero space of $\hat{\nu}^*(\tilde{r})$. In local coordinates, the components of $\sigma_1$ is a linear combination of the components of $\hat{\nu}^*(\tilde{r})$ with holomorphic coefficients. This shows that $\hat{\nu}^{-1}p^{-1}_{\tilde{M}}(0)$ is a closed analytic subspace of $Z(\sigma_1)$. To show the openness, we just need to show that for any $u \in \hat{\nu}^{-1}p^{-1}_{\tilde{M}}(0)$, there exists an open neighbourhood $W_u$ in $\hat{\nu}^{-1}p^{-1}_{\tilde{M}}(M_{f(0)})$ such that $|\hat{\nu}^{-1}p^{-1}_{\tilde{M}}(0) \cap W_u| = |Z(\sigma_1) \cap W_u|$. Such $W_u$ is provided by the previous proposition.
\end{proof}

Let $K_{f(0)}'' = \Big(Z(\sigma_1) \cap \hat{\nu}^{-1}p^{-1}_{\tilde{M}}(M_{f(0)})\Big) \setminus \hat{\nu}^{-1}p^{-1}_{\tilde{M}}(0)$ be the union of the connected components of $Z(\sigma_1) \cap \hat{\nu}^{-1}p^{-1}_{\tilde{M}}(M_{f(0)})$ disjoint from $\hat{\nu}^{-1}p^{-1}_{\tilde{M}}(0)$, and let $K_{f(0)}' =\hat{\nu}^{-1}p^{-1}_{\tilde{M}}(M_{f(0)}) \setminus K_{f(0)}''$ be the open subspace of $\hat{\nu}^{-1}p^{-1}_{\tilde{M}}(M_{f(0)})$ where $\hat{\nu}^{*}(\tilde{r})$ and $\sigma_1$ have the same vanishing locus. Denote by $I$ and $J$ the ideals of $Z(\sigma_1) \cap K_{f(0)}'$ and $\hat{\nu}^{-1}p^{-1}_{\tilde{M}}(0)$ in $K_{f(0)}'$ respectively. We have $I \subset J$ by the previous corollary. And moreover

\begin{corol}\label{idealintegral2}
$I$ and $J$ have the same integral closure in $\mathscr{O}_{K_{f(0)}'}$, and hence (their inverse images) have the same integral closure in $\mathscr{O}_{\hat{K}\setminus K_{f(0)}''}$.
\end{corol}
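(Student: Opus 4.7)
The plan is to prove both inclusions $\overline I\subseteq\overline J$ and $\overline J\subseteq\overline I$ using the analytic criterion for integral closure, and then to transport the conclusion to the ambient space $\hat K\setminus K''_{f(0)}$. The key tool is the classical Lejeune-Jalabert--Teissier criterion: on a reduced complex analytic space, a holomorphic function $h$ lies in $\overline I$ if and only if for every $u\in V(I)$ there exist an open neighbourhood $W_u$ of $u$ and a constant $C>0$ such that $|h|\le C\max_i|g_i|$ on $W_u$, for some (equivalently, any) local generating set $(g_i)$ of $I$.

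The inclusion $\overline I\subseteq \overline J$ is automatic from the containment $I\subseteq J$ recorded in the preceding corollary. The substantive step is the reverse inclusion, for which it suffices to prove $J\subseteq\overline I$. Working locally on $K'_{f(0)}$, over the open set $U\subset\textup{Grass}_n(p^*_{\tilde M}TM)$ the holomorphic splitting $\nu^*p^*_{\tilde M}TM\vert_U\cong \nu^*\tilde T_{M/N}\oplus\textup{taut}$ is available. I would choose a local holomorphic frame adapted to this direct sum: then $I$ is generated by the components of $\sigma_1$ and $J$ by the components of $\hat\nu^*(\tilde r)=\sigma_1+\sigma_2$, so it is enough to show each component $\sigma_2^j$ of $\sigma_2$ lies in $\overline I$.

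For this I would fix $u\in V(J)\cap K'_{f(0)}=\hat\nu^{-1}p^{-1}_{\tilde M}(0)$ and apply the preceding proposition with $c=1$: it provides an open neighbourhood $W_u$ of $u$ in $\hat\nu^{-1}p^{-1}_{\tilde M}(M_{f(0)})$ on which $\|\sigma_2\|<\|\sigma_1\|$. After shrinking $W_u$ so that in the chosen frame each vector norm is comparable to the componentwise supremum, this upgrades to $|\sigma_2^j|\le C_u\max_i|\sigma_1^i|$ on $W_u$, for some $C_u>0$ and every $j$. The analytic criterion then yields $\sigma_2^j\in\overline I$; summing over indices places every component of $\hat\nu^*(\tilde r)$ in $\overline I$, so $J\subseteq\overline I$ and $\overline I=\overline J$ as desired.

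The ``hence'' clause follows by running exactly the same argument in $\mathscr O_{\hat K\setminus K''_{f(0)}}$: the sections $\sigma_1$ and $\hat\nu^*(\tilde r)$ are globally defined on $\hat K$, and inspection of the preceding proposition shows its bound extends by continuity to open neighbourhoods of $\hat\nu^{-1}p^{-1}_{\tilde M}(0)$ in the ambient space rather than being confined to the special fibre; the inverse image ideals therefore inherit the same integral closure. The main obstacle I expect is the careful local bookkeeping that matches our chosen generators against the hypothesis of the analytic criterion in a uniform frame; the honest analytic input is entirely contained in the preceding proposition.
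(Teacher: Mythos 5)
Your proof is correct and follows essentially the same route as the paper, whose own proof simply invokes the analytic (Lejeune-Jalabert--Teissier) criterion for integral dependence from \cite{MR0568901} \S 2.7 together with the inequality of the preceding proposition, deferring details to Corollary 2 on page 20 of \cite{MR629121}. The one place you are looser than you should be is the ``hence'' clause: the bound needed in the ambient space $\hat{K}\setminus K_{f(0)}''$ (against generators of the \emph{inverse image} ideal, which also include the pullbacks of $f_1,\ldots,f_n$) is not a consequence of ``continuity'' of a strict inequality off the special fibre, but of rerunning the Thom $A_f$ estimate in the ambient space --- which is precisely what the cited corollary of Gonz\'alez-Sprinberg supplies.
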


\begin{proof}
This follows immediately from the second in the list of the criterions for integral dependence given in \cite{MR0568901} \S 2.7. Alternatively, one can check the proof of Corollary 2 on page 20 of \cite{MR629121}.
\end{proof}

To give a quick summary, we are now in the situation of the following diagram

\begin{equation*}
\begin{tikzcd}
Z(\sigma) \cap K_{f(0)}' \arrow[r] \arrow[d] & K_{f(0)}' \arrow[r] \arrow[d,"\sigma_1"] & \hat{K}\setminus K_{f(0)}'' \arrow[d,"(\sigma_1\text{,} f\circ p_{\tilde{M}} \circ \hat{\nu})"] \\
\hat{K} \arrow[r] \arrow[d] & \hat{\nu}^{*}\tilde{T}_{M/N} \arrow[r] \arrow[d] & \hat{\nu}^{*}\tilde{T}_{M/N} \times N \arrow[d] \\
\tilde{M} \arrow[r,"0_{\tilde{M}}"] & \tilde{T}_{M/N} \arrow[r,"0_{\tilde{T}_{M/N}}"] & \tilde{T}_{M/N} \times N
\end{tikzcd}
\end{equation*}
where all squares are Cartesian. 

Using Corollary \ref{idealintegral2}, we see that $|Z(\sigma) \cap K_{f(0)}'| = |\hat{\nu}^{-1}p^{-1}_{\tilde{M}}(0)|$ and moreover  
\begin{equation*}
s(Z(\sigma) \cap K_{f(0)}',\hat{K}\setminus K_{f(0)}'') = s(\hat{\nu}^{-1}p^{-1}_{\tilde{M}}(0), \hat{K}\setminus K_{f(0)}'') = s(\hat{\nu}^{-1}p^{-1}_{\tilde{M}}(0), \hat{K}).
\end{equation*}

We also have $[\hat{K}\setminus K_{f(0)}''] \in H^{BM}_{2(m+n(m-n))}(\hat{K}\setminus K_{f(0)}'')$, so that 

\begin{align*}
(0_{\tilde{T}_{M/N}}\circ 0_{\tilde{M}})^{!}[\hat{K}\setminus K_{f(0)}''] &= \Big\{c(\hat{\nu}^{*}\tilde{T}_{M/N}) \cap s(Z(\sigma) \cap K_{f(0)}',\hat{K}\setminus K_{f(0)}'')\Big\}_{2n(m-n)} \\
&= \Big\{c(\hat{\nu}^{*}\tilde{T}_{M/N}) \cap s(\hat{\nu}^{-1}p^{-1}_{\tilde{M}}(0),\hat{K})\Big\}_{2n(m-n)} \\
&= \Big\{c(\tilde{T}_{M/N})\cap s(p^{-1}_{\tilde{M}}(0), K^{\circ})\Big\}_{0} \\
&= \Big\{c(\tilde{T}_{M/N})\cap s(p^{-1}_{\tilde{M}}(0), \tilde{M})\Big\}_{0}
\end{align*}
where in the third equality we use the identification $H^{BM}_{\bullet}(p^{-1}_{\tilde{M}}(0)) \cong H^{BM}_{\bullet + 2n(m-n)}(\hat{\nu}^{-1}p^{-1}_{\tilde{M}}(0))$.

On the other hand, if we take $\delta \in D^{\circ}_{\eta}$ such that $M_{\delta} \cap K^{\circ}$ defines a Milnor fibre $F$ (without the boundary), then it is clear that $p_{\tilde{M}}: \tilde{M} \to M$ restricts to the identity in a neighbourhood of $F$, and $\tilde{T}_{M/N} \Big|_{F} \cong TF$, and $\delta^{!}_{\tilde{T}_{M/N}}[\hat{K}\setminus K_{f(0)}''] = [\hat{\nu}^{-1}F]$ is just the class of a principal $\textup{Hom} (\mathbb{C}^n, \mathbb{C}^{m-n})$-bundle over $F$. Therefore we have
\begin{align*}
(0_{\tilde{T}_{M/N}}\circ 0_{\tilde{M}})^{!}[\hat{K}\setminus K_{f(0)}''] &= 0^{!}_{\tilde{M}}0^{!}_{\tilde{T}_{M/N}}[\hat{K}\setminus K_{f(0)}''] \\
&= 0^{!}_{\tilde{M}}\delta^{!}_{\tilde{T}_{M/N}}[\hat{K}\setminus K_{f(0)}''] \\
&= \Big\{c(\hat{\nu}^{*}TF) \cap [\hat{\nu}^{-1}F]\Big\}_{2n(m-n)} \\
&= \Big\{c(TF)\cap [F]\Big\}_{0}.
\end{align*}

Taking the degree on both sides, we get
\begin{equation*}
\int c(\tilde{T}_{M/N})\cap s(p^{-1}_{\tilde{M}}(0), \tilde{M}) = \chi_c(F) = \chi(F).
\end{equation*}

We can also do the computation slightly differently. Note that
\begin{equation*}
0^{!}_{\tilde{T}_{M/N}}[\hat{K}\setminus K_{f(0)}''] = \Big\{s(K_{f(0)}', \hat{K}\setminus K_{f(0)}'')\Big\}_{2((n+1)(m-n))}.
\end{equation*}
By definition, $K_{f(0)}'$ is open in $\hat{\nu}^{-1}p^{-1}_{\tilde{M}}(M_{f(0)})$, and the latter space is purely $(n+1)(m-n)$-dimensional by proposition \ref{equidim}. Therefore $K_{f(0)}'$ is also purely $(n+1)(m-n)$-dimensional. Its every irreducible component is a proper component for the Gysin map $0^{!}_{\tilde{T}_{M/N}}$ in the sense of \cite{MR732620} Definition 7.1. However, one usually doesn't have $[K_{f(0)}'] = 0^{!}_{\tilde{T}_{M/N}}[\hat{K}\setminus K_{f(0)}'']$. This happens when the defining ideal of $K_{f(0)}'$, i.e. $(f_1,\ldots, f_n)$, is a regular sequence in $\mathscr{O}_{\hat{K}\setminus K_{f(0)}''}$. When $n=1$, this is always true, as is implied by the flatness assumption. When $n>1$, a sufficient condition for this to be true is that $\tilde{M} = \textup{Bl}_{C(f)}M$ is Cohen-Macauley. 

\begin{question}
Let $f$ be flat and sans \'{e}clatement en codimension $0$. Can one expect $\tilde{M} = \textup{Bl}_{C(f)}M$ to be Cohen-Macauley?
\end{question}
After a quick search of relevant results from literation, we decide that it is at least not a completely trivial question to ask. In the special case that $f$ has finite contact type, a famous result states that $C(f)$ is Cohen-Macauley (\cite{MR747303} proposition 4.4). So it seems not unreasonable to ask this question. 

Assuming $[K_{f(0)}'] = 0^{!}_{\tilde{T}_{M/N}}[\hat{K}\setminus K_{f(0)}'']$, we have
\begin{align*}
(0_{\tilde{T}_{M/N}}\circ 0_{\tilde{M}})^{!}[\hat{K}\setminus K_{f(0)}''] &= 0^{!}_{\tilde{M}}[K_{f(0)}']\\
&= \Big\{c(\hat{\nu}^{*}\tilde{T}_{M/N})\cap s(Z(\sigma) \cap K_{f(0)}', K_{f(0)}')\Big\}_{2n(m-n)} \\
&= \Big\{c(\hat{\nu}^{*}\tilde{T}_{M/N})\cap s(\hat{\nu}^{-1}p^{-1}_{\tilde{M}}(0), K_{f(0)}')\Big\}_{2n(m-n)} \\
&= \Big\{c(\hat{\nu}^{*}\tilde{T}_{M/N})\cap s(\hat{\nu}^{-1}p^{-1}_{\tilde{M}}(0), \hat{\nu}^{-1}p^{-1}_{\tilde{M}}(M_{f(0)})\Big\}_{2n(m-n)} \\
&=  \Big\{c(\tilde{T}_{M/N})\cap s(p^{-1}_{\tilde{M}}(0), p^{-1}_{\tilde{M}}(M_{f(0)}))\Big\}_{0}
\end{align*}
where we need to use Corollary \ref{idealintegral2} again in deriving the third equality.

With a slight shift of perspective, we can relate our computation to the local Euler obstruction. Indeed, $[K^{\circ}] \in H^{BM}_{2m}(K^{\circ})$ can be viewed as a fundamental class relative to $f$. The vector bundle $\tilde{T}_{M/N}$ has a Thom cohomology class $\omega \in H^{2(m-n)}(\tilde{T}_{M/N},\tilde{T}_{M/N} \setminus \tilde{M})$. Our $\sigma_H$ induces a map $(K, \partial K) \to (\tilde{T}_{M/N}, \tilde{T}_{M/N} \setminus \tilde{M})$. Pulling $\omega$ back, we get an obstruction class $\sigma^{*}_H(\omega) \in H^{2(m-n)}(K, \partial K) \cong H^{2(m-n)}(K^{\circ})$. The relative local Euler obstruction $\textup{Eu}_f(0)$ is $\sigma^{*}_H(\omega) \cap [K^{\circ}] \in H^{BM}_{2n}(K^{\circ})$. For any $\delta \in D^{\circ}_{\eta}$, we can pull back $\textup{Eu}_f(0)$ via the Gysin map associated with the inclusion $i_\delta: \delta \to D^{\circ}_{\eta}$, and we obtain $i^{!}_{\delta}(\textup{Eu}_{f}(0)) \in H^{BM}_0(p^{-1}_{\tilde{M}}(M_{\delta})\cap K^{\circ})$. Its degree is independent of the chosen $\delta$ and is nothing but $\chi(F)$. Roughly speaking, $\omega$ is the cohomology class in $H^{2(m-n)}(\tilde{T}_{M/N})$ dual to the zero section $\tilde{M}$, and pulling $\omega$ back via $\sigma_H$ is equivalent to intersecting $\sigma_H(K^{\circ})$ with the zero section $\tilde{M}$. These words make good sense in topology, but to make sense the intersection in algebraic geometry, we are forced to use the auxiliary construction $\hat{K}$, which is technically essential but conceptually less significant.

\subsection{}\label{conor}
We have finished the discussion on the Nash version of the graph construction. With the help of Lemma \ref{nash-conor}, we can quickly derive the corresponding results for the conormal version. It is important to keep in mind the following fibre product diagram
\begin{displaymath}
\xymatrix{
\Pbf\Big(p^*(T^*M\oplus f^*T^*N)\Big) \times \Pbf^1 \ar[r] \ar[d]& \Pbf(T^*M\oplus f^*T^*N) \times \Pbf^1 \ar[d]^{\pi \times id} \\
G \times \Pbf^1 \ar[r]^{p \times id} &M \times \Pbf^1
}
\end{displaymath}

Though being trivial itself, it is the key diagram to link the Nash side to the conormal side of the picture.

First, we have 
\begin{displaymath}
\xymatrix{
\Pbf\Big(\pr^*_1S\vert_{\mathscr{M}}\Big) \ar[d] \ar[r] & \Pbf\Big(p^*(T^*M\oplus f^*T^*N)\Big) \times \Pbf^1 \ar[d]\\
\mathscr{M} \ar[r] &G \times \Pbf^1
}
\end{displaymath}
where the horizontal arrows are inclusions. Let $\overline{\mathscr{M}}$ and $\overline{\Pbf\Big(\pr^*_1S\vert_{\mathscr{M}}\Big)}$ be the closures in the spaces on the right column. It is clear that
\begin{equation}\label{bundle}
\overline{\Pbf\Big(\pr^*_1S\vert_{\mathscr{M}}\Big)} = \Pbf\Big(\pr^*_1S\vert_{\overline{\mathscr{M}}}\Big).
\end{equation}

Similarly we consider the following diagram
\begin{equation*}
\begin{tikzcd}[column sep=small]
\Pbf(\Gamma) \arrow[rd] \arrow[rr] & &  \Pbf(T^*M\oplus f^*T^*N)\times \Pbf^1 \arrow[ld] \\
                                            & M\times \Pbf^1 &
\end{tikzcd}
\end{equation*}
and let $\overline{\Pbf(\Gamma)}$ be the closure of $\Pbf(\Gamma)$ in $\Pbf(T^*M\oplus f^*T^*N) \times \Pbf^1$.

It is crucial to note that there is a diagram
\begin{equation*}
\begin{tikzcd}
\Pbf\Big(\pr^*_1S\vert_{\mathscr{M}}\Big) \arrow[r,"\cong"] \arrow[d] & \Pbf(\Gamma) \arrow[d] \\
\mathscr{M} \arrow[r,"\cong"]                                                              & M \times \Cbb
\end{tikzcd}
\end{equation*}
which induces a proper modification
\begin{equation*}
\tilde{p}:\overline{\Pbf\Big(\pr^*_1S\vert_{\mathscr{M}}\Big)} \to \overline{\Pbf(\Gamma)}.
\end{equation*}

The Cartesian squares
\begin{displaymath}
\xymatrix{
\mathscr{M}_{\infty} \ar[r] \ar[d] & \overline{\mathscr{M}} \ar[d] \\
G \times \{\infty\} \ar[r] \ar[d] & G \times \Pbf^1 \ar[d] \\
\{\infty\} \ar[r]& \Pbf^1
}
\end{displaymath}
defines $\mathscr{M}_{\infty}$ as an analytic subspace of $G$. We can define $[\mathscr{M}_{\infty}]$ as the associated cycle of $\mathscr{M}_{\infty}$, or we can define it by $j^![\overline{\mathscr{M}}]$. The agreement of these two definitions is ensured by the formula $\displaystyle{\alpha_t} = \sum_{\mathscr{V}_t\not\subset Y_t}n_i [(V_i)_t]$ on page 176 of \cite{MR732620}.

Let $S_\infty = \pr^*_1S\vert_{\mathscr{M_{\infty}}}$ and let $\Pbf(\Gamma_{\infty})$ be defined by the following Cartesian squares
\begin{equation*}
\xymatrix{
\Pbf(\Gamma_{\infty}) \ar[r] \ar[d] & \overline{\Pbf(\Gamma)} \ar[d] \\
M \times \{\infty\} \ar[r] \ar[d] & M \times \Pbf^1 \ar[d] \\
\{\infty\} \ar[r]^{j} & \Pbf^1 .
}
\end{equation*}

Using the fact that Gysin morphism commutes with flat pull-back and equation \eqref{bundle}, we find that $[\Pbf(S_\infty)]=j^![\overline{\Pbf\Big(\pr^*_1S\vert_{\mathscr{M}}\Big)}]$. Reasoning as we did for the formula $[\mathscr{M}_{\infty}] = j^![\overline{\mathscr{M}}]$, we have $[\Pbf(\Gamma_{\infty})]= j^![\overline{\Pbf(\Gamma)}]$. The commutativity of Gysin pull-back and proper push-forward implies that $\tilde{p}_*[\Pbf(S_\infty)] = [\Pbf(\Gamma_{\infty})]$, and equation \eqref{deflimit} implies that $[\Pbf(S_\infty)] = [\Pbf\Big(\pr^*_1S\vert_{\mathscr{M_{\infty}}}\Big)] = [\Pbf\Big(\pr^*_1S\vert_{\tilde{\mathscr{M}}}\Big)] + [\Pbf\Big(\pr^*_1S\vert_{W}\Big)]$. Combing these equations, we get $[\Pbf(\Gamma_{\infty})] = \tilde{p}_*[\Pbf\Big(\pr^*_1S\vert_{\tilde{\mathscr{M}}}\Big)] + \tilde{p}_*[\Pbf\Big(\pr^*_1S\vert_{W}\Big)]$.

The morphism $f^*T^*N \to T^*M$ is injective when restricted to $M \setminus C(f)$. Set $\Pbf(f^*T^*N)^{\circ} = \Pbf(f^*T^*N)\vert_{M \setminus C(f)}$ and let $\overline{\Pbf(f^*T^*N)^{\circ}}$ be the closure of $\Pbf(f^*T^*N)^{\circ}$ in $\Pbf(T^*M)$. It is clear that $\tilde{p}: \Pbf\Big(\pr^*_1S\vert_{\tilde{\mathscr{M}}}\Big) \to \overline{\Pbf(f^*T^*N)^{\circ}}$ is a proper modification. Let $\Sigma_f$ be the closure of $\Pbf(\Gamma_{\infty})\setminus \overline{\Pbf(f^*T^*N)^{\circ}}$ in $\Pbf(\Gamma_{\infty})$ so that $[\Pbf(\Gamma_{\infty})] = [\overline{\Pbf(f^*T^*N)^{\circ}}] + [\Sigma_f]$. The discussion we have done so far implies the following proposition.

\begin{proposition}
$[\Sigma_f]$ is a projectivised conic Lagrangian cycle of dimension $m+n-1$. The ambient space which contains it as a projectivised Lagrangian cycle is $\Pbf(T^*M \oplus f^*T^*N) \cong \Pbf(T^*(M \times N)\vert_{M})$. Here $M$ is embedded in $M \times N$ via the graph of $f$. We also have 
\begin{align*}
&\tilde{p}_*[\Pbf(S_\infty)] = [\Pbf(\Gamma_{\infty})] = [\overline{\Pbf(f^*T^*N)^{\circ}}] + [\Sigma_f], \\
&\tilde{p}_*[\Pbf\Big(\pr^*_1S\vert_{\tilde{\mathscr{M}}}\Big)] = [\overline{\Pbf(f^*T^*N)^{\circ}}], \\
&\tilde{p}_*[\Pbf\Big(\pr^*_1S\vert_{\widetilde{C(f)}}\Big)] = [\Sigma_f].
\end{align*}

\end{proposition}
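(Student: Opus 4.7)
The plan is to observe first that most of the content of the proposition has been assembled in the paragraph immediately preceding its statement. The decomposition $[\Pbf(\Gamma_{\infty})]=\tilde{p}_*[\Pbf(\pr^*_1 S|_{\tilde{\mathscr{M}}})] + \tilde{p}_*[\Pbf(\pr^*_1 S|_{\widetilde{C(f)}})]$ follows from \eqref{deflimit}, and the fact that $\tilde{p}$ restricts to a proper modification $\Pbf(\pr^*_1 S|_{\tilde{M}}) \to \overline{\Pbf(f^*T^*N)^{\circ}}$ gives the first push-forward identity; the third is then the difference, using that $\overline{\Pbf(f^*T^*N)^\circ}$ is the generic deformation limit over $M\setminus C(f)$ and hence occurs in $[\Pbf(\Gamma_\infty)]$ with multiplicity one. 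The ambient identification $T^*M \oplus f^*T^*N \cong T^*(M\times N)|_M$ is the canonical splitting coming from the product decomposition of $T(M\times N)$ combined with the graph embedding $z\mapsto (z,f(z))$. The dimension claim is immediate: each $\Pbf(\Gamma_t)$ is the projectivisation of a rank-$n$ bundle over the $m$-dimensional $M$, hence of pure dimension $m+n-1$, and the Gysin map $j^{!}$ preserves pure dimension, so the limit and both of its summands are pure of dimension $m+n-1$.

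The substantive remaining step is the Lagrangian property, which I would prove locally on $M$. Fix $z\in M$ and a coordinate chart $V\subset N$ around $f(z)$ biholomorphic to an open ball in $\Cbb^n$, and write $U = f^{-1}(V)$. Over $U$ the family of embeddings $i_t\colon U \to U\times V$, $z\mapsto (z,-tf(z))$, is globally defined, and the canonical splitting $T^*(U\times V)|_{U} \cong T^*U \oplus f^*T^*V$ realises the fibrewise isomorphism \eqref{conorm-graph} between $T^{*}_{M_t}(U\times V)$ and $\Gamma_t|_U$ (the $n=1$ verification in \S\ref{deformation} extends without change). Consequently $\Pbf(\Gamma_\infty)|_U$ coincides with the projectivised Lagrangian specialisation of the family $\{T^*_{M_t}(U\times V)\}_{t\in\Pbf^1}$.

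I would then invoke Theorem \ref{Lagrangianlimit} for the projection $g\colon \overline{\mathscr{M}}|_U \hookrightarrow (U\times V)\times \Pbf^1 \to \Pbf^1$. Since $\overline{\mathscr{M}}|_U \cong U \times \Pbf^1$ and $g$ is the tautological flat projection with equidimensional fibres, the condition sans \'{e}clatement en codimension $0$ is immediate. The theorem then writes the fibre at $\infty$ of $T^*_g((U\times V)\times \Pbf^1)$ as a union of conormal spaces of analytic subvarieties of $U\times V$, so that every irreducible component of $[\Pbf(\Gamma_\infty)]|_U$, translated through the identification, is the projectivisation of a conic Lagrangian in $T^*U \oplus f^*T^*V$. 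Because being conic Lagrangian is local on support, this local description globalises and $[\Sigma_f]$ is projectivised conic Lagrangian in $\Pbf(T^*M\oplus f^*T^*N)$.

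The main obstacle, flagged by the author in the remark preceding the subsection, is that $i_t$ has no global analogue when $N$ is not affine: there is no global family of graphs in $M\times N$ parametrised by $\Pbf^1$, so the argument above must be run chart-by-chart on $N$. The obstacle is however cosmetic for the Lagrangian conclusion, because the cycle $[\Pbf(\Gamma_\infty)]$ is intrinsic to $f$ and the canonical splitting $T^*(M\times N)|_M \cong T^*M \oplus f^*T^*N$ ensures that the locally-defined Sabbah-style conormal descriptions agree on overlaps of charts on $N$. The only point requiring verification is precisely this gluing, and it follows from the naturality of \eqref{conorm-graph} in the coordinate on $N$, i.e.\ from the observation that the identification is the identity on $T^*M\oplus f^*T^*N$ in any trivialisation.
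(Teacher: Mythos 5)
Your reduction of the cycle identities to the paragraph preceding the proposition, the dimension count, and the overall strategy for the Lagrangian property (localise on $M$, pass to a chart $V\subset N$ where the family $i_t$ exists, and import the fact that a $1$-parameter limit of conormal spaces is still Lagrangian via Theorem \ref{Lagrangianlimit}) all match the paper's proof. But there is a genuine error in the pivot step: you assert that the fibrewise isomorphism \eqref{conorm-graph} between $T^*_{M_t}(U\times V)$ and $\Gamma_t|_U$ for finite $t$ implies that $\Pbf(\Gamma_\infty)|_U$ \emph{coincides} with the projectivised Lagrangian specialisation of $\{T^*_{M_t}(U\times V)\}$. It does not: the two families sit inside different ambient spaces ($\Pbf(T^*M\oplus f^*T^*\Cbb^n)\times\Cbb$ with fixed base point $z$, versus $\Pbf(T^*(U\times V))\times\Cbb$ with moving base point $(z,-tf(z))$), and taking closures at $t=\infty$ does not commute with this identification. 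The paper computes both limits explicitly in \S\ref{deformation}--\ref{computelag} for $n=1$: the graph limit is $[\textup{Bl}_Y M]+[\Pbf(C_YM\oplus f^*T^*\Cbb|_Y)]$ while the Lagrangian limit is $[\Pbf(C_YM\oplus f^*T^*\Cbb|_Y)]+[\mathscr{X}\times\Cbb]$, and these differ precisely in the components dominating $M$. If your claimed coincidence held, Theorem \ref{Lagrangianlimit} would force \emph{every} component of $\Pbf(\Gamma_\infty)$ to be projectivised Lagrangian, including $\overline{\Pbf(f^*T^*N)^{\circ}}$ --- which the remark immediately following the proposition explicitly denies.

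The repair is what the paper actually does: the two limits are not equal, but they share the common piece $|\Sigma_f|$, namely the union of the components mapping into $C(f)$ (this is a set-theoretic statement, visible in Figures \ref{f2} and \ref{f3}, and does not involve multiplicities). Since $|\Sigma_f|$ occurs among the irreducible components of the $1$-parameter Lagrangian specialisation, each of its components is the projectivisation of a conormal variety, and that is all the proposition claims. So you should replace ``coincides with'' by ``shares with the Lagrangian specialisation exactly the components lying over $C(f)$,'' and justify that sharing (e.g.\ by the explicit comparison of the two closures, as in \S\ref{computelag}) rather than deducing it from the finite-$t$ isomorphism alone. Your final paragraph on gluing over charts of $N$ is fine once this is fixed, since the statement being glued is set-theoretic and local on $M$.
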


\begin{proof}
All the statements are clear except that we still need to show $|\Sigma_f|$ is the projectivisation of a conic Lagrangian subvariety of $T^*(M \times N)$. This statement is local on $M$. Therefore the proof is reduced to the case $N = \mathbb{C}^n$. We have demonstrated in \S \ref{deformation} that the graph deformation can be converted back to the deformation of conormal spaces by using the family of embeddings $i_t$. Though the limits of the two deformations are not the same, there exists a common piece $|\Sigma_f|$ for both. For this, one can safely rely on the geometric intuition provided by Figure \ref{f2} and \ref{f3} since this statement is set-theoretic, i.e. it does not involve the multiplicities of the limit cycles. Since we know that the limit of 1-parameter deformation of Lagrangian family is still Lagrangian, we conclude that $|\Sigma_f|$, as the union of some irreducible components of the limit, must be Lagrangian. The details are left to the reader.
\end{proof}

\begin{remark}
The cycle $[\overline{\Pbf(f^*T^*N)^{\circ}}]$ has dimension $m+n-1$ too, but it is {\em{not}} a projectivised conic Lagrangian cycle, i.e. it is not the projectivised conormal space of any subspace of $M\times N$.  
\end{remark}

Theorem \ref{nash-integral} and Lemma \ref{nash-conor} now helps us to conclude:

\begin{theorem}\label{mu-chi-conor}
Let $f: M \to N$ be flat, sans \'{e}clatement en codimension $0$.
\begin{align*}
\mu(z) &= (-1)^m \int c(\zeta^{\vee}_\infty) \cap s\Big(\pi^{-1}(z)\cap \Sigma_f,\Sigma_f\Big), \\
\chi(z) &= (-1)^{n-1} \int c(\zeta^{\vee}_\infty) \cap s\Big(\pi^{-1}(z)\cap \overline{\Pbf(f^*T^*N)^{\circ}},\overline{\Pbf(f^*T^*N)^{\circ}}\Big) \\
&= (-1)^{n-1} \int c(\zeta^{\vee}_M) \cap s\Big(\pi^{-1}_{M}(z)\cap \overline{\Pbf(f^*T^*N)^{\circ}},\overline{\Pbf(f^*T^*N)^{\circ}}\Big).
\end{align*}
Therefore, the projectivised characteristic cycle of $\mu$ in terms of the graph embedding $C(f) \to M \times N$ is $(-1)^{n-1}[\Sigma_f]$.
\end{theorem}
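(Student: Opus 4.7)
The plan is to deduce all three identities from the Nash-side results of Theorem \ref{nash-integral} by transferring them across the proper modification $\tilde{p}\colon \overline{\Pbf(\pr_1^*S|_{\mathscr{M}})} \to \overline{\Pbf(\Gamma)}$, converting $c(Q^\vee_\infty)$-integrals into $c(\zeta^\vee_\infty)$-integrals along the way. The main input is not the final statement of Theorem \ref{nash-integral} but its intermediate step, which gives $\chi(z) = \int c(Q^\vee_\infty) \cap s(p^{-1}(z) \cap \tilde{M}, \tilde{M})$, together with $\mu(z) = (-1)^{m-n+1}\int c(Q^\vee_\infty) \cap s(p^{-1}(z) \cap \widetilde{C(f)}, \widetilde{C(f)})$.

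The core of the argument is the Nash-to-conormal transfer at $t = \infty$. For each component $Z \in \{\tilde{M}, \widetilde{C(f)}\}$ of $\mathscr{M}_\infty$, the projection $q\colon \Pbf(\pr_1^*S|_Z) \to Z$ is a $\Pbf^{n-1}$-bundle, and the Whitney relation $c(\pr_1^*S|_Z)\,c(\pr_1^*Q|_Z)=c(T^*M \oplus f^*T^*N)$ together with the projection formula for $q$ produces exactly the fiberwise identity used to prove Lemma \ref{nash-conor}, now applied at $t=\infty$. Pushing the result forward through $\tilde{p}$ via the two cycle identities
$$\tilde{p}_*[\Pbf(\pr_1^*S|_{\tilde{M}})] = [\overline{\Pbf(f^*T^*N)^\circ}], \qquad \tilde{p}_*[\Pbf(\pr_1^*S|_{\widetilde{C(f)}})] = [\Sigma_f]$$
from the proposition preceding the theorem, and invoking the projection formula for Segre classes, yields the first formula for $\chi$ with sign $(-1)^{n-1}$ and the $\mu$ formula with sign $(-1)^{m-n+1}(-1)^{n-1}=(-1)^m$. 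The alternative form of $\chi$ in terms of $\zeta_M$ and $\pi_M$ is then immediate from the inclusion $\overline{\Pbf(f^*T^*N)^\circ} \subset \Pbf(T^*M) \subset \Pbf(T^*M \oplus f^*T^*N)$: on this inclusion $\zeta^\vee_\infty$ splits as $\zeta^\vee_M \oplus \pi_M^*(f^*T^*N)^\vee$, and the second factor restricts to a trivial bundle on each fiber $\pi_M^{-1}(z)$, so $c(\zeta^\vee_\infty)$ contracts to $c(\zeta^\vee_M)$ inside the integral.

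For the final assertion about $\Pbf(\mathbf{Ch}(\mu))$: the constructible function associated to any projectivised conic Lagrangian cycle in $\Pbf(T^*(M\times N))$ carries the overall sign $(-1)^{m+n-1}$, since $M \times N$ has dimension $m+n$, and $\zeta^\vee_\infty$ identifies with $\tilde{\zeta}^\vee$ along the graph embedding as explained in \S \ref{microlocal}. Matching $(-1)^m=(-1)^{n-1}\cdot(-1)^{m+n-1}$ then identifies $(-1)^{n-1}[\Sigma_f]$ as the projectivised characteristic cycle of $\mu$.

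I expect the main obstacle to be the Nash-to-conormal transfer across $\tilde{p}$. Although this step is morally Lemma \ref{nash-conor} at the limit $t=\infty$, the components $\tilde{M}$ and $\widetilde{C(f)}$ are generically singular, so both the Segre classes $s(p^{-1}(z) \cap Z, Z)$ and the projection formula for the proper modification $\tilde{p}$ must be handled on singular subvarieties. All the required compatibilities between Segre classes, Gysin pullbacks, and proper pushforwards are contained in \cite{MR732620} \S 4, but careful bookkeeping of dimensions, singular loci, and each factor of $(-1)^{n-1}$ is needed so that the signs reported above are seen to arise for the correct geometric reasons.
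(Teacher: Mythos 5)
Your proposal is correct and follows essentially the same route as the paper: the paper's own proof of Theorem \ref{mu-chi-conor} is precisely to combine the Nash-side formulas of Theorem \ref{nash-integral} with the $c(Q^{\vee})$-to-$c(\zeta^{\vee})$ conversion of Lemma \ref{nash-conor}, transported to $t=\infty$ via the pushforward identities $\tilde{p}_*[\Pbf(\pr_1^*S\vert_{\tilde{\mathscr{M}}})]=[\overline{\Pbf(f^*T^*N)^{\circ}}]$ and $\tilde{p}_*[\Pbf(\pr_1^*S\vert_{\widetilde{C(f)}})]=[\Sigma_f]$ from the preceding proposition, with exactly the sign bookkeeping $(-1)^{m-n+1}(-1)^{n-1}=(-1)^m$ and $(-1)^m=(-1)^{n-1}(-1)^{m+n-1}$ that you give. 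The one step you flag as delicate (birational invariance of Segre classes under $\tilde{p}$ on the singular components) is indeed the only substantive verification the paper leaves implicit, and it is handled as you indicate by \cite{MR732620} \S 4.
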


Though the notation is getting heavier, the geometry is quite simple. $\Sigma_f$ is just the part which does not dominate $M$ in the deformation limit of $\Pbf(\Gamma_t)$. It is mapped into $C(f)$.

\subsection{}\label{application}
Finally, we come to the proof of Theorem \ref{chern-mu}. We need more notations.
\begin{itemize}
\item $\pi_N$ is the projection $\Pbf(f^*T^*N) \to M$.
\item $\xi_N$ is the tautological line bundle of $\Pbf(f^*T^*N)$. Composing $\xi_N \to \pi_N^*f^*T^*N$ and $\pi_N^*f^*T^*N \to \pi_N^*T^*M$, we get a map $\xi_N \to \pi_N^*T^*M$. We can also perform the graph construction to this map. The various graphs ($t\neq \infty$) are denoted by $\gamma_t \subset \xi_N \oplus \pi^*_NT^*M$. Let $a: \Pbf(\xi_N \oplus \pi^*_NT^*M) \to \Pbf(f^*T^*N)$ be the projection. Let $\eta$ and $\tau$ be the tautological subbundle and quotient bundle on $\Pbf(\xi_N \oplus \pi^*_NT^*M)$ respectively.
 \end{itemize}

\begin{proposition}\label{bira}
There is a proper modification $\kappa:\Pbf(\xi_N \oplus \pi^*_NT^*M) \to \Pbf(T^*M \oplus f^*T^*N)$.We also have $\kappa(\Pbf(\gamma_t)) = \Pbf(\Gamma_t)$ and $\kappa^*\xi_N = \eta$.
\end{proposition}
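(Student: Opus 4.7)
The plan is to construct $\kappa$ as the composition of a closed embedding and a projection. The tautological inclusion $\xi_N \hookrightarrow \pi_N^*f^*T^*N$ together with the identity on $\pi_N^*T^*M$ assembles into an injection of vector bundles
\[
\iota: \xi_N \oplus \pi_N^*T^*M \hookrightarrow \pi_N^*(T^*M \oplus f^*T^*N)
\]
over $\Pbf(f^*T^*N)$. Projectivising yields a closed embedding
\[
\Pbf(\xi_N \oplus \pi_N^*T^*M) \hookrightarrow \Pbf(T^*M \oplus f^*T^*N) \times_M \Pbf(f^*T^*N),
\]
and I take $\kappa$ to be the composition of this embedding with the first projection. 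Properness is automatic since both factors are proper over $M$.

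Next I would verify that $\kappa$ is birational by producing an explicit inverse over the open set $U := \Pbf(T^*M \oplus f^*T^*N) \setminus \Pbf(T^*M)$. A point of $U$ is a line $\ell \subset (T^*M \oplus f^*T^*N)_x$ whose projection to $(f^*T^*N)_x$ is non-zero; this projection determines a line $L \in \Pbf(f^*T^*N)_x$, and then $\ell \subset L \oplus (T^*M)_x$ gives a canonical point of $\Pbf(\xi_N \oplus \pi_N^*T^*M)_{(x,L)}$. Since the source and target both have dimension $2m+n-1$, this is enough to conclude that $\kappa$ is a proper modification; the exceptional locus $\kappa^{-1}(\Pbf(T^*M))$ is precisely $\Pbf(f^*T^*N) \times_M \Pbf(T^*M)$, contracted to $\Pbf(T^*M)$ by the second projection (collapsing the $L$-direction because $\ell \subset T^*M$ no longer remembers $L$).

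The two remaining identities are a direct unwinding of definitions. For the cycle statement, the fibre of $\gamma_t$ at $(x,L)$ is spanned by $(v, t\,df_x(v))$ where $v$ generates $L$; applying $\iota$ sends this to the line in $(f^*T^*N)_x \oplus (T^*M)_x$ spanned by the same vector, which is a line in $(\Gamma_t)_x$, and as $L$ varies over $\Pbf(f^*T^*N)_x$ these sweep out $\Pbf(\Gamma_t)_x$ exactly. For the tautological identity, interpreting $\xi_N$ on $\Pbf(T^*M \oplus f^*T^*N)$ as the tautological sub-line-bundle of $\pi^*(T^*M \oplus f^*T^*N)$, both $\kappa^*\xi_N$ and $\eta$ have fibre $\ell$ at $(x,L,\ell)$, viewed inside $(f^*T^*N)_x \oplus (T^*M)_x$ through $\iota$. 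The main obstacle is purely notational: keeping straight which ambient bundle hosts each tautological line, and being careful that $\iota$ is a genuine inclusion so that projectivising does not collapse anything on the source.
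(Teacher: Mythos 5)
Your $\kappa$ is exactly the paper's map: the paper describes a point of $\Pbf(\xi_N \oplus \pi^*_NT^*M)$ as a triple $(z,l_1,l_2)$ with $l_1 \subset f^*T^*N(z)$ and $l_2 \subset l_1 \oplus T^*M(z)$, defines $\kappa(z,l_1,l_2)=(z,l_2)$, and declares the remaining claims clear, which is precisely your embedding-followed-by-projection construction with the birationality over $\Pbf(T^*M\oplus f^*T^*N)\setminus\Pbf(T^*M)$ and the two identities spelled out. The proposal is correct and takes essentially the same approach, just with more of the routine verification made explicit (including the right reading of $\kappa^*\xi_N=\eta$ as a statement about the tautological sub-line-bundle on the target).
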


\begin{proof}
A point on $\Pbf(\xi_N \oplus \pi^*_NT^*M)$ consists of the triple
\begin{itemize} 
\item a point $z\in M$, 
\item a 1-dimensional vector space $l_1 \subset f^*T^*N(z)$,
\item another 1-dimensional vector space $l_2 \subset l_1 \oplus T^*M(z)$.
\end{itemize}
The proper modification $\kappa$ is given by $(z,l_1,l_2) \mapsto (z,l_2)$. The other statements are now clear.
\end{proof}

The situation is best summarised in the following diagram.
\begin{equation}\label{lastdiagram}
\begin{tikzcd}
\Pbf(\xi_N \oplus \pi^*_NT^*M) \arrow[r,"\kappa"] \arrow[d,"a"]& \Pbf(T^*M \oplus f^*T^*N) \arrow[d,"\pi"] \\
\Pbf(f^*T^*N) \arrow[r,"\pi_N"] \arrow[ur] & M
\end{tikzcd}
\end{equation}

Now, the relation $\kappa(\Pbf(\gamma_t)) = \Pbf(\Gamma_t)$ implies that $\kappa_*[\Pbf(\gamma_\infty)] = [\Pbf(\Gamma_\infty)]$ by the commutativity of Gysin morphism and proper push-forward. Here we leave the obvious definition of $[\Pbf(\gamma_\infty)]$ to the reader.

We have $[\Pbf(\gamma_\infty)] = [\Pbf(C_Z \oplus \xi_N\vert_{Z})] + [\textup{Bl}_Z\Pbf(f^*T^*N)]$ by \cite{MR732620} Example 18.1.6 (d). Here $Z$ is defined by the zero of the section $\Osr_{\Pbf(f^*T^*N)} \to \pi_N^{*}T^*M \otimes \xi_N^{\vee}$, or equivalently the ideal sheaf of $Z$ is the image of $\pi^*_NTM\otimes \xi_N \to \Osr_{\Pbf(f^*T^*N)}$. The cone $C_Z$ is naturally a subcone of $\pi^*_NT^*M$, and is {\em{not}} the normal cone to $Z$ in $\Pbf(f^*T^*N)$. In fact, they are different by a twist of $\xi_N$. See our discussion of the Rees algebra in \S\ref{misc}. Because the map $\textup{Bl}_Z\Pbf(f^*T^*N) \to \overline{\Pbf(f^*T^*N)^{\circ}}$ is a proper modification, we have $\kappa_*[\textup{Bl}_Z\Pbf(f^*T^*N)] = [\overline{\Pbf(f^*T^*N)^{\circ}}]$. Consequently,
\begin{equation}\label{coversigma}
\kappa_*[\Pbf(C_Z \oplus \xi_N\vert_{Z})] = [\Sigma_f].
\end{equation}

\begin{proposition}
\begin{equation*}
c_*(\mu) =(-1)^m\pi_{N*}a_*\Big(c(a^*\zeta^{\vee}_N)c(\tau^{\vee}) \cap [\Pbf(C_Z \oplus \xi_N\vert_{Z})] \Big)
\end{equation*}
\end{proposition}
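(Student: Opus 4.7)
The plan is to combine Theorem~\ref{mu-chi-conor}, which identifies $(-1)^{n-1}[\Sigma_f]$ as the projectivised characteristic cycle of $\mu$ with respect to the graph embedding $M \hookrightarrow M \times N$, with equation~\eqref{coversigma}, which realises $[\Sigma_f]$ as the $\kappa$-pushforward of $[\Pbf(C_Z \oplus \xi_N|_Z)]$. The strategy is to rewrite the Chern class transformation as an integral over $[\Sigma_f]$, then transport this integral along $\kappa$ via the projection formula, and finally identify the pulled-back Chern class on $\Pbf(\xi_N \oplus \pi_N^* T^*M)$ in terms of $\tau$ and $a^* \zeta_N$.

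First I would apply the Chern class/characteristic cycle recipe recalled earlier in the paper. The ambient complex manifold for the graph embedding is $M \times N$ of dimension $m+n$, so the sign is $(-1)^{m+n-1}$. Combining with the factor $(-1)^{n-1}$ from Theorem~\ref{mu-chi-conor} gives
\begin{equation*}
c_*(\mu) \;=\; (-1)^{m+n-1}(-1)^{n-1}\,\pi_*\!\Big(c(\zeta^{\vee})\cap[\Sigma_f]\Big) \;=\; (-1)^m\,\pi_*\!\Big(c(\zeta^{\vee})\cap[\Sigma_f]\Big).
\end{equation*}
Next, substituting $[\Sigma_f] = \kappa_*[\Pbf(C_Z \oplus \xi_N|_Z)]$ and applying the projection formula along $\kappa$, together with $\pi \circ \kappa = \pi_N \circ a$ from diagram~\eqref{lastdiagram}, gives
\begin{equation*}
c_*(\mu) \;=\; (-1)^m\,\pi_{N*}a_*\!\Big(\kappa^*c(\zeta^{\vee})\cap[\Pbf(C_Z \oplus \xi_N|_Z)]\Big).
\end{equation*}

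The remaining task is to identify $\kappa^*c(\zeta^{\vee})$ with $c(a^*\zeta_N^{\vee})c(\tau^{\vee})$. For this I would play off the two tautological sequences
\begin{equation*}
0 \to \xi \to \pi^*(T^*M \oplus f^*T^*N) \to \zeta \to 0, \qquad 0 \to \eta \to a^*(\xi_N \oplus \pi_N^*T^*M) \to \tau \to 0,
\end{equation*}
against the sequence $0 \to \xi_N \to \pi_N^* f^*T^*N \to \zeta_N \to 0$ pulled back via $a$. Using Proposition~\ref{bira} to identify $\kappa^*\xi = \eta$, and $\kappa^*\pi^*(T^*M \oplus f^*T^*N) = a^*\pi_N^*(T^*M \oplus f^*T^*N)$, the Whitney formula on both sides yields $c(\eta)c(\kappa^*\zeta) = c(a^*\pi_N^*T^*M)\,c(a^*\xi_N)\,c(a^*\zeta_N) = c(\eta)c(\tau)c(a^*\zeta_N)$, so that $c(\kappa^*\zeta) = c(\tau)c(a^*\zeta_N)$ and, dualising, $\kappa^*c(\zeta^{\vee}) = c(\tau^{\vee})c(a^*\zeta_N^{\vee})$. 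Plugging this into the previous display gives exactly the claimed identity.

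I expect the main technical nuisance to be bookkeeping: confirming the two signs $(-1)^{m+n-1}$ and $(-1)^{n-1}$ combine to $(-1)^m$, and tracking that the Whitney computation for $\kappa^*\zeta$ is clean despite the tautological sequences not splitting as bundles (only their total Chern classes multiply). Once these are in order, the proof is essentially a formal chase around diagram~\eqref{lastdiagram} combined with the projection formula.
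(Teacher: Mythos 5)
Your proposal is correct and follows essentially the same route as the paper: apply the characteristic-cycle-to-Chern-class recipe to $(-1)^{n-1}[\Sigma_f]$ from Theorem \ref{mu-chi-conor}, push the computation through $\kappa$ via equation \eqref{coversigma} and the projection formula, and identify $\kappa^*c(\zeta^{\vee})=c(\tau^{\vee})c(a^*\zeta_N^{\vee})$ by playing the three tautological Whitney sum relations against each other. The paper writes this last identification as a regrouping of the fraction $c(a^*\pi_N^*TM)c(a^*\pi_N^*f^*TN)/c(\eta^{\vee})$, which is the multiplicative manipulation you describe, so the two arguments coincide.
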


\begin{proof}
It follows from Theorem \ref{mu-chi-conor}, proposition \ref{bira}, diagram \eqref{lastdiagram} and equation \eqref{coversigma} that
\begin{align*}
c_*(\mu) &=(-1)^m\pi_{N*}a_*\Big(c(a^*\pi^*_NTM)c(a^*\pi^*_NTN)(c(\eta^{\vee}))^{-1} \cap [\Pbf(C_Z \oplus \xi_N\vert_{Z})] \Big) \\
               &=(-1)^m\pi_{N*}a_*\Big(\frac{c(a^*\pi^*_NTM)c(a^*\xi^{\vee}_N)}{c(\eta^{\vee})}\cdot\frac{c(a^*\pi^*_NTN)}{c(a^*\xi^{\vee}_N)} \cap [\Pbf(C_Z \oplus \xi_N\vert_{Z})]\Big) \\
               &=(-1)^m\pi_{N*}a_*\Big(c(\tau^{\vee})c(a^*\zeta^{\vee}_N) \cap [\Pbf(C_Z \oplus \xi_N\vert_{Z})]\Big).
\end{align*}

\end{proof}

Now, Theorem \ref{chern-mu} becomes a simple corollary of this proposition. Because by the assumption of Theorem \ref{chern-mu}, $Z$ is smooth. Therefore the cone $C_Z$ is the same as $\pi^*_NT^*M\vert_{Z}$, and 
\begin{equation*}
 \Pbf(C_Z \oplus \xi_N\vert_{Z}) = \Pbf((\pi^*_NT^*M \oplus \xi_N)\vert_{Z}).
\end{equation*}
Theorem \ref{chern-mu} then follows from
\begin{equation*}
a_*\Big(c(\tau^{\vee})\cap[\Pbf((\pi^*_NT^*M \oplus \xi_N)\vert_{Z})]\Big) = (-1)^m[Z].
\end{equation*}
See example 3.3.3 in \cite{MR732620}.

\bibliographystyle{alpha}
\bibliography{liao}

\end{document}